\acrodef{BO}{{\sl Benjamin-Ono}}
\acrodef{rBO}{{\sl regularized Benjamin-Ono}}
\acrodef{rILW}{{\sl regularized Intermediate Long Wave}}
\acrodef{DSW}{{\sl Dispersive Shock Wave}}
\acrodef{DSWs}{{\sl Dispersive Shock Waves}}
\acrodef{ILW}{{\sl Intermediate Long Wave}}
\acrodef{CGN}{{\sl Conjugate Gradient-Newton}}
\acrodef{SW/SW}{{\sl Shallow water / Shallow water}}
\acrodef{B/B}{{\sl Boussinesq / Boussinesq}}
\newcommand{\sech}{\mathop{\operator@font sech}}
\newcommand{\sign}{\mathop{\operator@font sign}}
\newtheorem{lemma}{Lemma}[section]
\newtheorem{theorem}{Theorem}[section]
\newtheorem{proposition}{Proposition}[section]
\newtheorem{remark}{Remark}[section]
\numberwithin{equation}{section}
\begin{document}

\title[]{A high-order method for the numerical approximation of fractional nonlinear Schr\"{o}dinger equations}


\author{Angel Dur\'an}
\address{\textbf{A.~Dur\'an:} Applied Mathematics Department, University of Valladolid, P/ Belen 15, 47011, Valladolid, Spain}
\email{angeldm@uva.es}

\author{Nuria Reguera}
\address{\textbf{N.~Reguera:} Department of Mathematics and Computation, University of Burgos, 09001 Burgos, Spain}
\email{nreguera@ubu.es}



\subjclass[2010]{65M70,65M60,76B15}



\keywords{Fractional nonlinear Schr\"{o}dinger equations,  spectral discretization,  Runge-Kutta Composition methods}

\begin{abstract}
In this paper, the periodic initial-value problem for the fractional nonlinear Schr\"{o}dinger (fNLS) equation is discretized in space by a Fourier spectral Galerkin method and in time by diagonally implicit, high-order Runge-Kutta schemes, based on the composition with the implicit midpoint rule (IMR). Some properties and error estimates for the semidiscretization in space and for the full discretization are proved. The convergence results and the general performance of the scheme are illustrated with several numerical experiments.
\end{abstract}

\maketitle

\section{Introduction}\label{sec1}
In this paper, we will consider the cubic fNLS equation
\begin{eqnarray}
iu_{t}- (-\partial_{xx})^{s}u+ f(u)=0,\quad {x}\in \mathbb{R},\quad t>0,\label{fnls11}
\end{eqnarray} 
where $0<s\leq 1$, the Laplacian-type operator $(-\partial_{xx})^{s}$ s has the Fourier representation
\begin{eqnarray}
\widehat{(-\partial_{xx})^{s}u}(\xi)=|\xi|^{2s}\widehat{u}(\xi),\quad \xi\in\mathbb{R},\label{fnls11b}
\end{eqnarray}
(where $\widehat{u}(\xi)$ denotes the Fourier transform of $u$ at $\xi$), and $f:\mathbb{C}\rightarrow\mathbb{C}$ is the cubic term $f(u)=|u|^{2}u$. Note that, if $\overline{z}$ denotes the complex conjugate of $z$, then
\begin{eqnarray}
f(z)=\frac{\partial}{\partial \overline{z}}V(z),\;  z\in\mathbb{C},\; V(z)=\frac{|z|^{4}}{2}.\label{fnls11a}
\end{eqnarray}
Equation (\ref{fnls11}) was originally introduced by Laskin, \cite{Laskin2000,Laskin2002,Laskin2011} in his mathematical principles of fractional quantum mechanics, see also \cite{FrohlichJL2007}. Additional areas of application are nonlinear optics, \cite{Malomed1,Malomed2} and references therein, and water wave models, \cite{IonescuP2014,ObrechtS2015}
The alternative formulation of (\ref{fnls11}) as a real system has the form
\begin{eqnarray}
v_{t}-(-\partial_{xx})^{s}w+{\rm Im}f(v,w)&=&0,\nonumber\\
-w_{t}-(-\partial_{xx})^{s}v+{\rm Re}f(v,w)&=&0,\label{fnls12}
\end{eqnarray}
for $u=v+iw$, $v, w$ real-valued functions, and where $f$ is viewed as a vector field $f:\mathbb{R}^{2}\rightarrow\mathbb{R}^{2}$.

\subsection{Properties of the periodic initial-value problem and numerical approximation}
The aim of the present paper is the numerical analysis of the periodic initial-value problem (ivp) of (\ref{fnls11}) on some interval $(-L,L)$ with
\begin{eqnarray}
u(x,0)=u_ {0}(x),\; x\in (-L,L),\label{fnls13a}
\end{eqnarray}
for some $2L$-periodic initial condition $u_{0}$ or, alternatively, the periodic ivp for (\ref{fnls12}) with $2L$-periodic initial data
\begin{eqnarray}
v(x,0)=v_{0}(x),\; w(x,0)=w_{0}(x),\; x\in (-L,L).\label{fnls12a}
\end{eqnarray}
In addition, (\ref{fnls11b}) holds for $\xi=k\in\mathbb{Z}$, where $\widehat{u}(k)$ denotes the $k$th Fourier component of $u$ in $(-L,L)$, that is
\begin{eqnarray*}
\widehat{u}(k)=\frac{1}{2L}\int_{-L}^{L}e^{-i\frac{\pi kx}{L}}u(x)dx.
\end{eqnarray*}
The numerical analysis of the problem will be made in sections \ref{sec2} and \ref{sec3}, for simplicity, taking $L=\pi$.

The following notation will be used throughout the paper. $L^{2}=L^{2}(-L,L)$ will denote the space of squared integrable functions on $(-L,L)$, with inner product
\begin{eqnarray}
(u_{1},u_{2})=\int_{-L}^{L}u_{1}(x)\overline{u_{2}(x)}dx,\label{fnlsL3}
\end{eqnarray}
with induced norm $||\cdot||$. When $u\in L^{2}$ is represented by its real and imaginary parts $u=(v,w)\in L^{2}\times L^{2}$, the real part of (\ref{fnlsL3}) will also be used as inner product, and will be denoted by $\langle\cdot,\cdot\rangle$. The norm in $L^{\infty}(-L,L)$ is denoted by $|\cdot|_{\infty}$. For real $\mu\geq 0$, $H^{\mu}$ will stand for the $L^{2}$-based Sobolev space of order $\mu$ of periodic functions on $(-L,L)$ with associated norm
\begin{eqnarray*}
||u||_{\mu}=\left(\sum_{k\in\mathbb{Z}}(1+k^{2})^{\mu}|\widehat{u}(k)|^{2}\right)^{1/2}.
\end{eqnarray*}
An expression of the form $g_{1} \lesssim g_{2}$ will stand for the existence of a constant $C$ such that $g_{1}\leq C g_{2}$.

We mention now some properties on the periodic ivp of (\ref{fnls11}) 
For $s=1$, the main results on well-posedness are derived from the works by Bourgain, \cite{Bourgain1993}, where  local and global well-posedness of the periodic cubic NLS in $L^{2}$ is proved. On the other hand, for $s\in (1/2,1)$, local well-posedness in $H^{\alpha}$, for $\alpha>s_{c}=\frac{1-s}{2}$, is proved in \cite{DemirbasET}, while ill-posedness for $\alpha<s_{c}$ is shown in \cite{ChoHKL2015}. In \cite{ErGT}, smoothing properties of the cubic fNLS are improved, in the sense that the nonlinear part of the solution is smoother than the initial data with a gain of regularity from $\alpha>\frac{3-2s}{4}$ to $\alpha+a$ where $a\leq \min\{2s-1,2\alpha+s-1\}$ and $s\in (1/2,1]$. Finally, local well-posedness of the periodic ivp of (\ref{fnls11}) in $H^{\alpha}, \alpha>0$, for nonlinear terms $f(u)=N(|u|)u$, and two cases of real function $N$ (one of which includes the cubic case considered here) is studied in
\cite{SanchezRR2025} when the initial data satisfies a non-vanishing condition $\inf_{x}|u_{0}(x)|>0$.

\begin{proposition}
\label{propos12a}
The following quantities
\begin{eqnarray}
I_{1}(v,w)&=&\frac{1}{2}\int_{-L}^{L}(v^{2}+w^{2})dx=\frac{1}{2}\int_{-L}^{L}|u|^{2}dx, \label{fnls14a}\\
I_{2}(v,w)&=&\frac{1}{2}\int_{-L}^{L}(vw_{x}-wv_{x})dx=\frac{1}{2}\int_{-L}^{L}{\rm Im}(u\overline{u}_{x})dx, \label{fnls14b}\\
H(v,w)&=&\frac{1}{2}\int_{-L}^{L}\left(\left( (|D|^{s}v)^{2}+(|D|^{s}w)^{2}\right)-V(v,w)\right) d{x}\nonumber\\
&=&\int_{-L}^{L}\left(\frac{1}{2}||D|^{s}u|^{2}-V(u)\right),\label{fnls14c}
\end{eqnarray}
where $u=v+iw$, $|D|^{s}=(-\partial_{xx})^{s/2}$, and $V$ is defined in (\ref{fnls11a}), are preserved by smooth solutions of the periodic ivp of (\ref{fnls11}) on $(-L,L)$. The quantity (\ref{fnls14c}) is the Hamiltonian function of the canonical Hamiltonian structure of (\ref{fnls11}).
\begin{eqnarray}
\frac{d}{dt}\begin{pmatrix}v\\w\end{pmatrix}=J\delta H(v,w),\; J=\begin{pmatrix}0&1\\-1&0\end{pmatrix},\label{fnls15}
\end{eqnarray}
where $\delta H=(\frac{\delta H}{\delta v},\frac{\delta H}{\delta w})^{T}$ denotes the variational (Fr\'echet) derivative of $H$.
\end{proposition}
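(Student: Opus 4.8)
The plan is to establish each of the three conservation laws by differentiating the corresponding functional along a smooth solution, substituting the evolution equation, and exploiting that, on $2L$-periodic functions, $(-\partial_{xx})^{s}$ is a self-adjoint, nonnegative Fourier multiplier with real symbol $|k|^{2s}$. The structural fact I would record first is that for sufficiently regular periodic $u$ the quantity $\int_{-L}^{L}\overline{u}\,(-\partial_{xx})^{s}u\,dx=\int_{-L}^{L}||D|^{s}u|^{2}\,dx$ is real, by Parseval together with $|D|^{s}|D|^{s}=(-\partial_{xx})^{s}$ and self-adjointness of $|D|^{s}$, and that $(-\partial_{xx})^{s}$ commutes with $\partial_{x}$. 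Throughout, ``smooth solution'' is taken to mean that $u(\cdot,t)$ lies in $H^{\mu}$ with $\mu$ large enough to justify differentiation under the integral sign and the rearrangements below.

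For the $L^{2}$ invariant $I_{1}$ I would write $\frac{d}{dt}I_{1}=\mathrm{Re}\int_{-L}^{L}\overline{u}\,u_{t}\,dx$ and insert $u_{t}=-i\big((-\partial_{xx})^{s}u-f(u)\big)$ from (\ref{fnls11}). The dispersive term is then $-i$ times the real number $\int\overline{u}\,(-\partial_{xx})^{s}u$, hence purely imaginary, while the nonlinear term is $i\int\overline{u}\,f(u)=i\int|u|^{4}$, again purely imaginary; taking real parts gives $\frac{d}{dt}I_{1}=0$. For the momentum $I_{2}$ the same scheme applies but needs one integration by parts in $x$: after differentiating $I_{2}=\frac{1}{2}\int\mathrm{Im}(u\overline{u}_{x})$ and substituting the equation, the dispersive contribution cancels because $(-\partial_{xx})^{s}$ commutes with $\partial_{x}$ and is self-adjoint, while the cubic contribution reduces to the integral of a spatial derivative of a function of $|u|^{2}$ and vanishes by periodicity; this reflects invariance under spatial translation.

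For $H$ I would first verify the Hamiltonian structure (\ref{fnls15}) and then deduce conservation abstractly. Computing the Fr\'echet derivatives, the dispersive part contributes $(-\partial_{xx})^{s}v$ and $(-\partial_{xx})^{s}w$ to $\frac{\delta H}{\delta v}$ and $\frac{\delta H}{\delta w}$ through the self-adjointness of $|D|^{s}$, while the potential term contributes $-\mathrm{Re}\,f$ and $-\mathrm{Im}\,f$ once the relation $f=\partial V/\partial\overline{z}$ from (\ref{fnls11a}) is rewritten in terms of $V_{v}$ and $V_{w}$. One then checks that $J\,\delta H$ reproduces exactly the right-hand side of the real system (\ref{fnls12}). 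Conservation is immediate from skew-symmetry of $J$: $\frac{d}{dt}H=\langle\delta H,(v_{t},w_{t})^{T}\rangle=\langle\delta H,J\,\delta H\rangle=0$.

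The differentiations and the periodicity cancellations are routine; the step demanding the most care is the handling of the nonlocal operator $(-\partial_{xx})^{s}$, for which no pointwise integration by parts is available and one must argue in frequency space, using reality and nonnegativity of the symbol and its commutation with $\partial_{x}$, together with keeping track of the factors of two relating $V_{v},V_{w}$ to $\mathrm{Re}\,f,\mathrm{Im}\,f$. This is also where the regularity hidden in ``smooth solution'' is genuinely used.
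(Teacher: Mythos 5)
Your proposal is correct and follows essentially the same route as the paper: verify the Hamiltonian structure (\ref{fnls15}) first, use Plancherel/self-adjointness of the Fourier multiplier to kill the dispersive contributions, reduce the cubic contributions to a real quantity (for $I_{1}$) or a total $x$-derivative (for $I_{2}$), and obtain conservation of $H$ from skew-symmetry of $J$. The only cosmetic difference is that you phrase the $I_{1}$, $I_{2}$ computations as direct time differentiation in complex notation, whereas the paper packages the identical integrals as the Poisson brackets $\{I_{1},H\}=\{I_{2},H\}=0$.
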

\begin{proof}
We start checking (\ref{fnls15}), which is not hard to prove by using the formulation (\ref{fnls12}) and Plancherel's identity
\begin{eqnarray*}
\int_{-L}^{L}(|D|^{s}v)(|D|^{s}w)dx&=&\sum_{k=-\infty}^{\infty}\widehat{(|D|^{s}v)}(k)\overline{\widehat{(|D|^{s}w)}(k)}\\
&=&\sum_{k=-\infty}^{\infty}|k|^{2s}\widehat{v}(k)\overline{\widehat{w}(k)}=\int_{-L}^{L}\left((-\partial_{xx})^{s}v\right)wdx.
\end{eqnarray*}
On the other hand, the symplectic matrix $J$ in (\ref{fnls15}) defines the Poisson bracket
\begin{eqnarray*}
\{F,G\}=\int_{-L}^{L}(\delta f)^{T}J(\delta G)dx,
\end{eqnarray*}
for $F=F(v,w), G=G(v,w)$ real-valued functions $F,G:M\rightarrow \mathbb{R}$, where
$$M=\{u=(v,w), v,w\in H^{\infty}(-L,L)\},$$ and depending on $v,w$, and derivatives. Note that
\begin{eqnarray*}
\delta I_{1}(v,w)=\begin{pmatrix}v\\w\end{pmatrix},\;\delta I_{2}(v,w)=\begin{pmatrix}w_{x}\\v_{x}\end{pmatrix}. 
\end{eqnarray*}
Then, using Plancherel's identity and (\ref{fnls11a})
\begin{eqnarray}
\{I_{1},H\}(v,w)&=&\int_{-L}^{L}\left(v(-\partial_{xx})^{s}w-w(-\partial_{xx})^{s}v-v{\rm Im}f(v,w)+w{\rm Re}f(v,w)\right)dx\nonumber\\
&=&\int_{-L}^{L}(wv(v^{2}+w^{2})-vw(v^{2}+w^{2}))dx=0,\label{fnls16}
\end{eqnarray}
where we wrote $f=f(v,w)=(v^{2}+w^{2})\begin{pmatrix}v\\w\end{pmatrix}$. Similarly, integration by parts, Plancherel's identity, property (A0), and periodicity of $v,w$ lead to
\begin{eqnarray}
\{I_{2},H\}(v,w)&=&\int_{-L}^{L}\left(w_{x}(-\partial_{xx})^{s}w+v_{x}(-\partial_{xx})^{s}v-w_{x}w(v^{2}+w^{2})-v_{x}v(v^{2}+w^{2})\right)dx\nonumber\\
&=&-\int_{-L}^{L}(v_{x}v(v^{2}+w^{2})+w_{x}w(v^{2}+w^{2}))dx\nonumber\\
&=&-\frac{1}{2}\int_{-L}^{L}(v_{x}\frac{\partial V}{\partial v}(v,w)+w_{x}\frac{\partial V}{\partial w}(v,w))dx\nonumber\\
&=&-\frac{1}{2}\int_{-L}^{L}\frac{\partial}{\partial x}V(v,w)dx=0.\label{fnls17}
\end{eqnarray}
From (\ref{fnls16}) and (\ref{fnls17}), the quantities (\ref{fnls14a}), (\ref{fnls14b}) have zero Poisson bracket with the Hamiltonian (\ref{fnls14c}); then the proposition follows, cf. \cite{Olver}.
\end{proof}
An additional property of $f$, that will be used in the sequel, is concerned with its regularity. 
\begin{proposition}
\label{propos11}
If $u_{1}, u_{2}\in L^{2}\cap L^{\infty}$, then
\begin{eqnarray}
||f(u_{1})-f(u_{2})||\leq 3(||u_{1}||_{\infty}^{2}+||u_{2}||_{\infty}^{2})||u_{1}-u_{2}||.\label{fnlsL1}
\end{eqnarray}
\end{proposition}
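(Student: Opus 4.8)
The plan is to establish a pointwise bound on $f(u_1)-f(u_2)$ and then integrate, exploiting that the cubic term $f(u)=|u|^2u$ is smooth and that every difference can be made to carry a factor of $u_1-u_2$. The first step is to decompose the difference by adding and subtracting a mixed term,
\begin{eqnarray*}
f(u_{1})-f(u_{2})=|u_{1}|^{2}(u_{1}-u_{2})+(|u_{1}|^{2}-|u_{2}|^{2})u_{2}.
\end{eqnarray*}
The second summand still contains a difference of squared moduli, so for the complex-valued case I would split it further via
\begin{eqnarray*}
|u_{1}|^{2}-|u_{2}|^{2}=u_{1}(\overline{u_{1}}-\overline{u_{2}})+\overline{u_{2}}(u_{1}-u_{2}),
\end{eqnarray*}
after which both contributions are multiples of $u_{1}-u_{2}$.

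Combining the two identities and applying the triangle inequality pointwise, together with $|\overline{u_{1}}-\overline{u_{2}}|=|u_{1}-u_{2}|$ and $|\overline{u_{2}}|=|u_{2}|$, gives
\begin{eqnarray*}
|f(u_{1})-f(u_{2})|\leq \left(|u_{1}|^{2}+|u_{1}||u_{2}|+|u_{2}|^{2}\right)|u_{1}-u_{2}|.
\end{eqnarray*}
Next I would bound each of the three modulus factors by the $L^{\infty}$ norms, using $|u_{j}(x)|\leq ||u_{j}||_{\infty}$ and the crude estimate $|u_{1}||u_{2}|\leq ||u_{1}||_{\infty}||u_{2}||_{\infty}\leq ||u_{1}||_{\infty}^{2}+||u_{2}||_{\infty}^{2}$. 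Since each of the three terms in the bracket is then dominated by $||u_{1}||_{\infty}^{2}+||u_{2}||_{\infty}^{2}$, the whole bracket is at most $3(||u_{1}||_{\infty}^{2}+||u_{2}||_{\infty}^{2})$, which accounts precisely for the constant appearing in (\ref{fnlsL1}).

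Finally, because the resulting coefficient is a constant independent of $x$, I would take the $L^{2}$ norm of the pointwise inequality and pull the constant out of the integral, leaving $||u_{1}-u_{2}||$ and yielding (\ref{fnlsL1}). I do not expect a genuine obstacle here: the statement is the familiar fact that the cubic nonlinearity is locally Lipschitz on $L^{2}\cap L^{\infty}$, and the only points requiring minor care are the algebraic splitting of $|u_{1}|^{2}-|u_{2}|^{2}$ for complex arguments and the role of the hypothesis $u_{1},u_{2}\in L^{2}\cap L^{\infty}$, which guarantees that the $L^{\infty}$ norms on the right-hand side are finite.
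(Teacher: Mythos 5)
Your proof is correct, and it takes a genuinely different (though equally elementary) route from the paper. The paper writes $f(u_{1})-f(u_{2})=\int_{0}^{1}f'(\tau u_{1}+(1-\tau)u_{2})(u_{1}-u_{2})\,d\tau$ and uses the bound $|f'(u)|\leq 3|u|^{2}$ componentwise, together with $|\tau u_{1}+(1-\tau)u_{2}|^{2}\leq ||u_{1}||_{\infty}^{2}+||u_{2}||_{\infty}^{2}$, to reach the same pointwise estimate before integrating. You instead factor the difference algebraically, $f(u_{1})-f(u_{2})=|u_{1}|^{2}(u_{1}-u_{2})+\bigl(u_{1}(\overline{u_{1}}-\overline{u_{2}})+\overline{u_{2}}(u_{1}-u_{2})\bigr)u_{2}$, obtaining the pointwise factor $|u_{1}|^{2}+|u_{1}||u_{2}|+|u_{2}|^{2}\leq 3(||u_{1}||_{\infty}^{2}+||u_{2}||_{\infty}^{2})$ with no calculus at all. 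Your version has the small advantage of sidestepping the interpretation of $f'$ for the non-holomorphic map $u\mapsto|u|^{2}u$ (which in the paper must be read as the real Jacobian in $(v,w)$ or via Wirtinger derivatives); the paper's mean-value argument has the advantage of extending immediately to more general nonlinearities $f(u)=N(|u|)u$ where an explicit factorization is less convenient. Both arrive at the same constant $3$ and the same final integration step, so either proof is acceptable.
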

\begin{proof}
We can write
\begin{eqnarray*}
f(u_{1})-f(u_{2})=\int_{0}^{1}f'(\tau u_{1}+(1-\tau)u_{2})(u_{1}-u_{2})d\tau.\label{fnlsL2}
\end{eqnarray*}
Note that each component of $f'(u)$ is bounded by $3|u|^{2}$. Then
\begin{eqnarray}
|f(u_{1})-f(u_{2})|&\leq &\int_{0}^{1}3|\tau u_{1}+(1-\tau)u_{2}|^{2}|u_{1}-u_{2}|d\tau\nonumber\\
&\leq &3(||u_{1}||_{\infty}^{2}+||u_{2}||_{\infty}^{2})|u_{1}-u_{2}|,\label{fnlsL2b}
\end{eqnarray}
from which (\ref{fnlsL1}) follows.
\end{proof}
Using the regularity of $f$ and the fact that $H^{l}, l>1/2$, is an algebra, we have the following extension of Proposition \ref{propos11}, cf. \cite{SanchezRR2025}:
\begin{proposition}
\label{propos12}
Let $l>1/2$, $u_{1}, u_{2}\in L^{2}\cap L^{\infty}$, then there is a constant $C>0$ such that
\begin{eqnarray}
||f(u_{1})-f(u_{2})||_{l}\leq C(||u_{1}||_{l}^{2}+||u_{2}||_{l}^{2})||u_{1}-u_{2}||_{l}.\label{fnlsL1_1}
\end{eqnarray}
\end{proposition}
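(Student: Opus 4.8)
The plan is to bypass the mean-value (integral) representation used for Proposition \ref{propos11} and instead reduce everything to the algebra property of $H^{l}$ through a purely algebraic telescoping of the cubic term. The two tools I would fix at the outset are: (i) for $l>1/2$, $H^{l}$ is a Banach algebra, so there is a constant $C_{l}>0$, depending only on $l$, with $||gh||_{l}\leq C_{l}||g||_{l}||h||_{l}$; and (ii) complex conjugation is an isometry of $H^{l}$, since $\widehat{\overline{g}}(k)=\overline{\widehat{g}(-k)}$ gives $|\widehat{\overline{g}}(k)|=|\widehat{g}(-k)|$, so that reindexing the defining sum yields $||\overline{g}||_{l}=||g||_{l}$.

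Next I would expand the difference. Writing $f(u)=|u|^{2}u=u\overline{u}\,u$ and inserting the intermediate terms $|u_{1}|^{2}u_{2}$ and then $u_{1}\overline{u_{2}}$, a direct computation gives the decomposition
\begin{eqnarray*}
f(u_{1})-f(u_{2})=|u_{1}|^{2}(u_{1}-u_{2})+|u_{2}|^{2}(u_{1}-u_{2})+u_{1}u_{2}\overline{(u_{1}-u_{2})}.
\end{eqnarray*}
The point of this rearrangement is that every summand is now a product of exactly three factors drawn from $u_{1}$, $u_{2}$, $u_{1}-u_{2}$ and their conjugates, so each is amenable to the algebra estimate; this is simply the $H^{l}$ analogue of the factorization underlying Proposition \ref{propos11}.

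Finally, I would apply tool (i) twice to each product together with tool (ii) and the triangle inequality, e.g.
\begin{eqnarray*}
||\,|u_{1}|^{2}(u_{1}-u_{2})\,||_{l}\leq C_{l}^{2}||u_{1}||_{l}\,||\overline{u_{1}}||_{l}\,||u_{1}-u_{2}||_{l}=C_{l}^{2}||u_{1}||_{l}^{2}||u_{1}-u_{2}||_{l},
\end{eqnarray*}
and analogously for the other two terms, bounding the mixed factor by $||u_{1}||_{l}||u_{2}||_{l}\leq \tfrac{1}{2}(||u_{1}||_{l}^{2}+||u_{2}||_{l}^{2})$. Summing the three contributions yields (\ref{fnlsL1_1}) with $C$ an absolute multiple of $C_{l}^{2}$. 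I do not expect a genuine obstacle here; the only point requiring care is the conjugate $\overline{(u_{1}-u_{2})}$ in the last summand, which is harmless precisely because of the isometry (ii). Had one instead pushed the representation $\int_{0}^{1}f'(\tau u_{1}+(1-\tau)u_{2})(u_{1}-u_{2})\,d\tau$ through the $H^{l}$ norm, the analogous argument would require an extra application of Minkowski's integral inequality to interchange the norm with the $\tau$-integration, which the telescoping route avoids.
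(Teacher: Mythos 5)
Your proposal is correct, and it is essentially the argument the paper intends: the paper gives no detailed proof of Proposition \ref{propos12}, justifying it only by ``the regularity of $f$ and the fact that $H^{l}$, $l>1/2$, is an algebra,'' and your telescoping of $|u_{1}|^{2}u_{1}-|u_{2}|^{2}u_{2}$ into three trilinear terms, each handled by two applications of the algebra bound together with the conjugation isometry, is a correct and complete realization of exactly that idea (the identity checks out, since the cross terms $u_{1}\overline{u_{1}}u_{2}$ and $u_{1}u_{2}\overline{u_{2}}$ cancel in pairs). The only cosmetic remark is that the hypothesis should effectively be $u_{1},u_{2}\in H^{l}$ for the right-hand side of (\ref{fnlsL1_1}) to be finite, but that is an issue with the paper's statement, not with your argument.
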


We now review some literature concerning the numerical approximation of the periodic initial-value problem (\ref{fnls11}), (\ref{fnls13a}) (or (\ref{fnls12}), (\ref{fnls12a})). For the nonfractional case ($s=1$), we refer to \cite{BarlettiBGI2024} for recent techniques about the numerical approximation based on the preservation of the Hamiltonian structure. (See also references therein for a review of previous general approaches, including geometric methods.)
In the fractional case, most of the literature, as in the study of well-posedness, is focused on the nonlinearity $f(u)=|u|^{2\sigma}u, \sigma>0$, with particular emphasis on the cubic case $\sigma=1$. Thus, in \cite{WangH2015}, an energy conservative Crank-Nicolson (CN) finite difference method is introduced and analyzed for the Dirichlet problem with the Riesz space fractional derivative (instead of (\ref{fnls11b})) and $1/2<s<1$. (See also \cite{WangXY2014,WangHZ2016,LiHW2017} for other approaches based on different approximations of the Riesz fractional derivative, linearly implicit conservative finite differences for a coupled fNLS equations, and a Galerkin finite element method for a more general nonlinear fractional Ginzburg-Landau equation, respectively.) For the frational derivative defined in (\ref{fnls11b}) and $f(u)=|u|^{2\sigma}u, \sigma>0$, three Fourier spectral methods are proposed in \cite{DuoZ2016}: split-step (SSFS), Crank-Nicolson (CNFS), and relaxation (ReFS) Fourier spectral schemes. Several conservation properties with respect to the mass (\ref{fnls14a}) and energy (\ref{fnls14c}) are derived. The spectral order of convergence in space and second order of convergence in time are checked computationally. The experiments also show the performance of the methods when simulating different long-term dynamics. On the other hand, the preservation of the symplectic structure and its consequences for long time simulations is in the origin of the methods introduced in \cite{WangH2018} for the cubic case. They are based on a suitable reformulation of the equation, a Fourier pseudospectral approximation in space, and a symplectic (the implicit midpoint rule) and multisymplectic methods as time integrators for the semidiscrete system. SSFS methods are used to investigate the dynamics of plane wave solutions of the cubic fNLS equation in \cite{DuoLZ} (see also references therein), while Fourier spectral methods with time splitting, \cite{BaoM2002,BaoM2003}, or composite Runge-Kutta methods, \cite{Driscoll2002}, are used to study the dynamics of ground state solutions in \cite{KleinSM2014}.
\subsection{Highlights and structure}
The main contributions of the paper are distributed according to the following structure:
\begin{itemize}
\item The semidiscretization of the periodic ivp of (\ref{fnls11}) with a Fourier Galerkin method is analyzed in section \ref{sec2}. The analysis consists of: (i)  the derivation of conserved quantities of the semidiscrete system; (ii) error estimates with respect to the exact solution $u\in H^{\mu}, \mu$ large enough, in the $L^{2}$ and $H^{s}$ norms ($s\in 1/2,1]$) of $O(N^{-\mu})$ and $O(N^{s-\mu})$, respectively (leading to spectral convergence for $u$ smooth) and where $N$  denotes the degree of the trigonometric polynomial approximation; (iii) a result of boundedness for the temporal derivatives of the semidiscrete solution.
\item Section \ref{sec3} is devoted to the full discretization of the semidiscrete system with a family of Runge-Kutta Compositon methods, based on the implicit midpoint rule, and proposed by Yoshida, \cite{Yoshida1990} (see also \cite{HairerLW2004}). Existence and uniqueness of fully discrete solution, as well as some geometric properties, are first discussed. The convergence of the numerical solution is established from the derivation  of error estimates in $L^{2}$ and $H^{s}$ norms ($s\in (1/2,1]$), which depend on the regularity of the exact solution and a priori estimate of the local temporal error. This estimate is proved to hold for the first two methods of the family (the implicit midpoint rule itself and the scheme with three stages) in Appendix \ref{appA}, showing a general procedure for its proof in the case of any method of the family.
\item In section \ref{sec4}, the previous results are illustrated with some numerical experiments for both the non-fractional and fractional case and the method of three stages and order four. After some details of the implementation and taking solitary wave solutions as reference, the performance of the full discretization is numerically checked in accuracy and in some aspects concerning long time simulations, such as the time behaviour of the errors with respect to the invariants of the problem and some parameters of the waves. 
\end{itemize}
This work is motivated by mainly two papers. In the first one, \cite{DD2021}, the same full discretization was analyzed for the numerical approximation of the Korteweg-de Vries (KdV) equation. The approach here reveals some differences in the treatment of the nonlinear term and the conditions for the error estimates and their proofs, and we thought they were worthy of attention. The second paper is \cite{NRAD1}, where a class of solitary wave solutions for the fNLS equation (\ref{fnls11}) with $f(u)=|u|^{2\sigma}u, \sigma>0$, was proved. As a continuation of this work, we are interested in the numerical study of their dynamics, by using the full discretization considered in the present paper. The results obtained here will give us the necessary confidence in the accuracy of the computations made in \cite{NRAD2}.
\section{Spectral semidiscretization in space}
\label{sec2}
\subsection{Presentation and properties of the semidiscretization}
This section is devoted to the analysis of the discretization in space of the periodic ivp
\begin{eqnarray}
&&iu_{t}- (-\partial_{xx})^{s}u+ f(u)=0,\quad {x}\in [-\pi,\pi],\quad 0\leq t\leq T,\nonumber\\
&&u(x,0)=u_{0}(x),\label{fnls2_0}
\end{eqnarray} 
where $T>0$, $0<s\leq 1$, $u_{0}$ is smooth, $2\pi$-periodic, and $f(u)=|u|^{2}u$. Let $N\geq 1$ be an integer and
\begin{eqnarray*}
S_{N}={\rm span}\{e^{ikx}, k\in\mathbb{Z}, -N\leq k\leq N, \, x\in[-\pi,\pi]\},
\end{eqnarray*}
be the finite-dimensional space of trigonometric polynomials of degree at most $N$. The spatial discretization described below will require the orthogonal projection onto $S_{N}$ and some of its properties. For $u\in L^{2}(-\pi,\pi)$, let
\begin{eqnarray*}
\widehat{u}_{k}=\widehat{u}(k)=\frac{1}{2\pi}\int_{-\pi}^{\pi}u(x)e^{-ikx}dx,\; k\in\mathbb{Z},
\end{eqnarray*}
be the $k$th Fourier coefficient of $u$. The $L^{2}$-projection $P_{N}:L^{}(-\pi,\pi)\rightarrow S_{N}$ of $u$ is defined as
\begin{eqnarray}
P_{N}u=\sum_{|k|\leq N}\widehat{u}_{k}e^{ikx}.\label{fnls2_1}
\end{eqnarray}
Some properties of (\ref{fnls2_1}) are listed below, \cite{Mercier,Canuto2006}:
\begin{itemize}
\item[(P1)] $P_{N}$ commutes with $(-\partial_{xx})^{s}, s>0$.
\item[(P2)] If $v\in L^{2}(-\pi,\pi)$ and $\varphi\in S_{N}$ then
\begin{eqnarray*}
(P_{N}v,\varphi)=(v,\varphi).
\end{eqnarray*}
\item[(P3)] For integers $0\leq j\leq \mu$ and $v\in L^{2}(-\pi,\pi)$, the following estimates hold:
\begin{eqnarray}
||v-P_{N}v||_{j}&\lesssim &N^{j-\mu}||v||_{\mu},\; \mu\geq 0,\label{P3}\\
|v-P_{N}v|_{\infty}&\lesssim &N^{1/2-\mu}||v||_{\mu},\; \mu>1/2.\nonumber
\end{eqnarray}
\end{itemize}
We will also make use of some inverse inequalities on $S_{N}$, \cite{Mercier,Canuto2006}: For integers $0\leq j\leq \mu$ and $\varphi\in S_{N}$
\begin{eqnarray}
||\varphi||_{\mu}\lesssim N^{\mu-j}||\varphi||_{j},\; |\varphi|_{\infty}\lesssim N^{1/2}||\varphi||.\label{Inverse}
\end{eqnarray}

Note also that if $v\in S_{N}$ and $0<s\leq 1$, then $(-\partial_{xx})^{s}v\in S_{N}$. In addition, if
\begin{eqnarray*}
v(x)=\sum_{|k|\leq N}\widehat{v}_{k}e^{ikx},
\end{eqnarray*}
then
\begin{eqnarray*}
(-\partial_{xx})^{s}v(x)=\sum_{|k|\leq N}\widehat{w}_{k}e^{ikx},\; \widehat{w}_{k}=|k|^{2s}\widehat{v}_{k},\; -N\leq k\leq N.
\end{eqnarray*}

The semidiscrete Fourier-Galerkin approximation to the solution of (\ref{fnls2_0}) is defined as a complex-valued map $u^{N}:[0,T]\rightarrow S_{N}$ such that, for all $\varphi\in S_{N}$
\begin{eqnarray}
&&(iu_{t}^{N}-(-\partial_{xx})^{s}u^{N}+f(u^{N}),\varphi)=0,\; 0\leq t\leq T,\label{fnls2_2}\\
&&u^{N}(x,0)=P_{N}u_{0}(x),\;x\in [-\pi,\pi].\nonumber
\end{eqnarray}
Note that, if $F:S_{N}\rightarrow S_{N}$ is defined as
\begin{eqnarray*}
(F(u),\varphi)=(-i(-\partial_{xx})^{s}u+if(u),\varphi),\; \varphi\in S_{N},
\end{eqnarray*}
or, equivalently
\begin{eqnarray}
F(u)=-i(-\partial_{xx})^{s}u+iP_{N}f(u),\label{fnls2_4*}
\end{eqnarray}
then, (\ref{fnls2_2}) can be written as
\begin{eqnarray}
&&\frac{d}{dt}u^{N}=F(u^{N}),\;0\leq t\leq T,\label{fnls2_3}\\
&&u^{N}(0)=P_{N}u_{0}.\nonumber
\end{eqnarray}
Note that
\begin{eqnarray*}
(F(u),u)=-i\int_{-\pi}^{\pi}||D|^{s}u|^{2}dx+i\int_{-\pi}^{\pi}|u|^{4}dx,
\end{eqnarray*}
so 
\begin{eqnarray}
{\rm Re}(F(u),u)=0.\label{fnls2_4**}
\end{eqnarray}
Alternatively, (\ref{fnls2_2}) can be introduced as a real system for real-valued functions $v^{N},w^{N}:[0,T]\rightarrow S_{N}$ such that, for all $\varphi,\psi\in S_{N}$
\begin{eqnarray*}
&&(v_{t}^{N}-(-\partial_{xx})^{s}w^{N}+{\rm Im}f(v^{N},w^{N}),\varphi)=0,\nonumber\\
&&(-w_{t}^{N}-(-\partial_{xx})^{s}v^{N}+{\rm Re}f(v^{N},w^{N}),\psi)=0,\; 0\leq t\leq T,\label{fnls2_4}\\
&&v^{N}(x,0)=P_{N}v_{0}(x),\; w^{N}(x,0)=P_{N}w_{0}(x)\;x\in [-\pi,\pi],\nonumber
\end{eqnarray*}
where $u_{0}=v_{0}+iw_{0}$. The real version of (\ref{fnls2_3}) will therefore be
\begin{eqnarray}
\frac{d}{dt}\begin{pmatrix}v^{N}\\w^{N}\end{pmatrix}=\begin{pmatrix}F_{1}(v^{N},w^{N})\\F_{2}(v^{N},w^{N})\end{pmatrix},\label{fnls2_5}
\end{eqnarray}
where
\begin{eqnarray*}
F_{1}(v,w)&=&(-\partial_{xx})^{s}w-{\rm Im}f(v,w),\\
F_{2}(v,w)&=&-(-\partial_{xx})^{s}v+{\rm Re}f(v,w).
\end{eqnarray*}
The semidicrete problem (\ref{fnls2_3}) has the following conserved quantities, \cite{QuS2019}.
\begin{proposition}
\label{propos21}
While the solution $u^{N}$ of (\ref{fnls2_2}) exists, it holds that
\begin{eqnarray*}
\frac{d}{dt}I_{1}(u^{N})=\frac{d}{dt}I_{2}(u^{N})=\frac{d}{dt}H(u^{N})=0.
\end{eqnarray*}
\end{proposition}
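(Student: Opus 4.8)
The plan is to differentiate each of the three functionals along the semidiscrete flow in its ODE form (\ref{fnls2_3}), with $F$ given by (\ref{fnls2_4*}), and to exploit two structural features of the Fourier--Galerkin discretization: the orthogonality property (P2), which removes the projection $P_N$ whenever it is tested against an element of $S_N$, and the skew-symmetry encoded in (\ref{fnls2_4**}). Conservation of $I_1$ in (\ref{fnls14a}) is immediate: since $I_1(u^N)=\tfrac12\|u^N\|^2$, one has $\frac{d}{dt}I_1(u^N)={\rm Re}(u^N_t,u^N)={\rm Re}(F(u^N),u^N)=0$ directly from (\ref{fnls2_4**}).

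For $I_2$ in (\ref{fnls14b}) I would first write $I_2(u^N)=\tfrac12{\rm Im}(u^N,u^N_x)$ and differentiate. Integrating by parts in $x$ (using periodicity) in the term $(u^N,\partial_x u^N_t)$ shows that $\frac{d}{dt}(u^N,u^N_x)=2i\,{\rm Im}(u^N_t,u^N_x)$ is purely imaginary, so $\frac{d}{dt}I_2(u^N)={\rm Im}(u^N_t,u^N_x)={\rm Im}(F(u^N),u^N_x)$. Now $u^N_x\in S_N$, so (P2) removes the projection in the nonlinear part, giving $(F(u^N),u^N_x)=-i((-\partial_{xx})^su^N,u^N_x)+i(f(u^N),u^N_x)$. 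The first inner product is purely imaginary, since its Fourier expansion is $-2\pi i\sum_k k|k|^{2s}|\widehat{u^N}(k)|^2$, and hence contributes nothing to the imaginary part after multiplication by $-i$; for the second, ${\rm Im}\bigl(i(f(u^N),u^N_x)\bigr)={\rm Re}(f(u^N),u^N_x)=\tfrac12\int_{-\pi}^{\pi}|u^N|^2\,\partial_x|u^N|^2\,dx=\tfrac14\int_{-\pi}^{\pi}\partial_x|u^N|^4\,dx=0$ by periodicity, because $f(u^N)\overline{u^N_x}+\overline{f(u^N)}\,u^N_x=|u^N|^2\,\partial_x|u^N|^2$. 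Thus $\frac{d}{dt}I_2(u^N)=0$.

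The Hamiltonian (\ref{fnls14c}) is the main point. Differentiating gives $\frac{d}{dt}H(u^N)={\rm Re}((-\partial_{xx})^su^N,u^N_t)-{\rm Re}(u^N_t,f(u^N))$, where the first term comes from the quadratic part (using self-adjointness of $(-\partial_{xx})^s$ and Plancherel's identity, as in the proof of Proposition \ref{propos12a}) and the second from the potential term. The strategy is to substitute $u^N_t=F(u^N)=-i(-\partial_{xx})^su^N+iP_Nf(u^N)$ into both and show they coincide. In the first, since $(-\partial_{xx})^su^N\in S_N$ property (P2) again removes $P_N$, and the self-interaction of the dispersive part is purely imaginary; one is left with ${\rm Re}((-\partial_{xx})^su^N,u^N_t)={\rm Im}((-\partial_{xx})^su^N,f(u^N))$. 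In the second, the dispersive contribution yields the same quantity, while the nonlinear contribution is $(iP_Nf(u^N),f(u^N))=i\|P_Nf(u^N)\|^2$, which is purely imaginary because $P_N$ is a self-adjoint idempotent, so it drops out of the real part; hence ${\rm Re}(u^N_t,f(u^N))={\rm Im}((-\partial_{xx})^su^N,f(u^N))$ as well. The two terms cancel and $\frac{d}{dt}H(u^N)=0$.

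The delicate point throughout is the presence of the Galerkin projection $P_N$ inside the nonlinearity: unlike in the continuous case, the relevant cancellations are not automatic. They survive precisely because $P_N$ is an orthogonal projection (property (P2) together with self-adjoint idempotency), which lets it be transferred onto, or absorbed by, the $S_N$-valued factors $u^N_x$, $(-\partial_{xx})^su^N$, and $P_Nf(u^N)$ appearing in each pairing. I would therefore organize the argument around isolating these three brackets and verifying the reality/total-derivative structure of each; that is where the care is needed, while the remainder is routine differentiation and Plancherel's identity.
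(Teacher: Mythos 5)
Your proof is correct and follows essentially the same route as the paper: both arguments reduce the three conservation laws to pairing the semidiscrete equation with $u^{N}$, $u^{N}_{x}$ and $u^{N}_{t}$ (you do this by substituting $u^{N}_{t}=F(u^{N})$ and invoking (P2) and (\ref{fnls2_4**}), the paper by choosing these as test functions $\varphi$ in (\ref{fnls2_2})), and both rely on the same observations that the dispersive pairings are real or purely imaginary and that the cubic contributions are exact $x$- or $t$-derivatives of $V$. Your explicit handling of the projection $P_{N}$ via orthogonality is a welcome clarification of a point the paper leaves implicit, but it is not a different method.
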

\begin{proof}
Taking $\varphi=u^{N}$ in (\ref{fnls2_2}) leads to
\begin{eqnarray}
i(u_{t}^{N},u^{N})-((-\partial_{xx})^{s}u^{N},u^{N})=(f(u^{N}),u^{N}).\label{fnls2_6}
\end{eqnarray}
Note that, from Parseval's identity, 
\begin{eqnarray*}
((-\partial_{xx})^{s}u^{N},u^{N})=\sum_{k=-\infty}^{\infty}|k|^{2s}|\widehat{u}^{N}(k)|^{2},
\end{eqnarray*}
and
\begin{eqnarray*}
(f(u^{N}),u^{N})=\int_{-\pi}^{\pi}|u^{N}|^{4}dx.
\end{eqnarray*}
Therefore, taking the imaginary part in (\ref{fnls2_6}), we have ${\rm Im}(i(u_{t}^{N},u^{N}))=0$. Observe that
\begin{eqnarray*}
{\rm Im}(i(u_{t}^{N},u^{N}))={\rm Re}(u_{t}^{N},u^{N})=\frac{1}{2}\frac{d}{dt}||u^{N}||^{2},
\end{eqnarray*}
which implies 
\begin{eqnarray*}
\frac{d}{dt}I_{1}(u^{N})=0.
\end{eqnarray*}
On the other hand, taking $\varphi=u_{x}^{N}$ in (\ref{fnls2_2}) leads to
\begin{eqnarray}
i(u_{t}^{N},u_{x}^{N})-((-\partial_{xx})^{s}u^{N},u_{x}^{N})=(f(u^{N}),u_{x}^{N}).\label{fnls2_7}
\end{eqnarray}
Note that, from Parseval's identity, 
\begin{eqnarray*}
((-\partial_{xx})^{s}u^{N},u_{x}^{N})=-ik\sum_{k=-\infty}^{\infty}|k|^{2s}|\widehat{u}^{N}(k)|^{2},
\end{eqnarray*}
and, from (\ref{fnls11a}) and periodicity of $u^{N}$
\begin{eqnarray*}
(f(u^{N}),u_{x}^{N})&=&I_{1}+iI_{2},\\
I_{1}&=&\int_{-\pi}^{\pi}\left(v_{x}^{N}{\rm Re}f(v^{N},w^{N})+w_{x}^{N}{\rm Im}f(v^{N},w^{N})\right)dx\\
&=&\frac{1}{2}\int_{-\pi}^{\pi}\frac{d}{dx}V(v^{N},w^{N})dx=0.
\end{eqnarray*}
Therefore, taking the real part in (\ref{fnls2_7}), we have ${\rm Re}(i(u_{t}^{N},u_{x}^{N}))=0$. Observe that
\begin{eqnarray*}
{\rm Re}(i(u_{t}^{N},u_{x}^{N}))&=&-{\rm Im}(u_{t}^{N},u_{x}^{N})=\frac{1}{2i}\int_{-\pi}^{\pi}(u_{t}^{N}\overline{u_{x}^{N}}-\overline{u_{t}^{N}}u_{x}^{N})dx\\
&=&\frac{1}{2i}\int_{-\pi}^{\pi}(u_{t}^{N}\overline{u_{x}^{N}}+\overline{u_{xt}^{N}}u^{N})dx\\
&=&\frac{1}{2i}\frac{d}{dt}\int_{-\pi}^{\pi}u^{N}\overline{u_{x}^{N}}dx=\frac{1}{2i}\frac{d}{dt}(u^{N},u_{x}^{N}),
\end{eqnarray*}
which implies 
\begin{eqnarray*}
\frac{d}{dt}I_{2}(u^{N})=0.
\end{eqnarray*}
If, finally, we take $\varphi=u_{t}^{N}$ in (\ref{fnls2_2}) then we have
\begin{eqnarray}
i(u_{t}^{N},u_{t}^{N})-((-\partial_{xx})^{s}u^{N},u_{t}^{N})=(f(u^{N}),u_{t}^{N}).\label{fnls2_8}
\end{eqnarray}
Note that
\begin{eqnarray*}
\frac{d}{dt}(|D|^{s}u^{N},|D|^{s}u^{N})=2{\rm Re}\int_{-\pi}^{\pi}|D|^{s}u^{N}\overline{|D|^{s}u_{t}^{N}}dx=2{\rm Re}((-\partial_{xx})^{s}u^{N},u_{t}^{N}),\label{fnls2_9}
\end{eqnarray*}
and, from (\ref{fnls11a}) 
\begin{eqnarray}
(f(u^{N}),u_{t}^{N})&=&I_{1}+iI_{2},\\
I_{1}&=&\int_{-\pi}^{\pi}\left(v_{t}^{N}{\rm Re}f(v^{N},w^{N})+w_{t}^{N}{\rm Im}f(v^{N},w^{N})\right)dx\nonumber\\
&=&\frac{1}{2}\int_{-\pi}^{\pi}\frac{d}{dt}V(v^{N},w^{N})dx.\label{fnls2_10}
\end{eqnarray}
Then, from (\ref{fnls2_9}), (\ref{fnls2_10}), and taking real part in (\ref{fnls2_8}), it holds that
\begin{eqnarray*}
\frac{d}{dt}H(u^{N})=0.
\end{eqnarray*}
\end{proof}
A key property for the next sections is given by the following result.
\begin{lemma}
\label{lemmaL5}
Let $s>1/2$,
$$C_{\infty}=\left(\sum_{k=-\infty}^{\infty}\frac{1}{1+|k|^{2s}}\right)^{1/2},$$ and let $u_{0}$ be such that the solution $u^{N}$ of  (\ref{fnls2_3}) with $I_{1}=I_{1}(u^{N}(0))$ and $E=E(u^{N}(0))$, satisfies
\begin{eqnarray}
C_{*}:=1-{C_{\infty}^{2}}I_{1}\geq 0,\; \frac{I_{1}}{2}+E\geq 0.\label{fnlsL4}
\end{eqnarray}
Then, while $u^{N}$ exists, it holds that
\begin{eqnarray}
||u^{N}(t)||_{s}\leq C_{S}:=\left(\frac{1}{C_{*}}(2E+I_{1})\right)^{1/2}.\label{fnlsL5}
\end{eqnarray}
\end{lemma}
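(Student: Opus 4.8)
The plan is to convert the two conserved quantities supplied by Proposition~\ref{propos21} into a self-improving (absorption) inequality for $\|u^{N}(t)\|_{s}$, whose finiteness is available a priori because $u^{N}(t)\in S_{N}$ is a trigonometric polynomial. First I would record what conservation buys: by Proposition~\ref{propos21}, $I_{1}(u^{N}(t))=I_{1}$ and $H(u^{N}(t))=E$ throughout the interval of existence. Reading off the definition (\ref{fnls14c}) of the Hamiltonian together with $V(u)=|u|^{4}/2$ gives the identity
\begin{eqnarray*}
\||D|^{s}u^{N}(t)\|^{2}=2E+\int_{-\pi}^{\pi}|u^{N}(t)|^{4}\,dx ,
\end{eqnarray*}
while $\|u^{N}(t)\|^{2}=2I_{1}$ stays frozen at its initial value. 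Thus the entire estimate reduces to controlling the single quartic term, and the point will be that its forcing coefficient is governed by the conserved mass.

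Next I would estimate $|u^{N}|_{\infty}$ by the $H^{s}$ norm. Writing $u^{N}(x)=\sum_{|k|\le N}\widehat{u}^{N}(k)e^{ikx}$ and applying Cauchy--Schwarz to the splitting $|\widehat{u}^{N}(k)|=(1+|k|^{2s})^{-1/2}(1+|k|^{2s})^{1/2}|\widehat{u}^{N}(k)|$ yields
\begin{eqnarray*}
|u^{N}|_{\infty}\le\sum_{|k|\le N}|\widehat{u}^{N}(k)|\le C_{\infty}\Bigl(\sum_{|k|\le N}(1+|k|^{2s})|\widehat{u}^{N}(k)|^{2}\Bigr)^{1/2},
\end{eqnarray*}
and it is precisely here that the hypothesis $s>1/2$ is needed, since it is what makes $C_{\infty}<\infty$. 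Because $0<s\le1$, the elementary subadditivity $(1+k^{2})^{s}\le1+|k|^{2s}$ lets me dominate the right-hand factor by $\|u^{N}\|_{s}$ (equivalently by $\|u^{N}\|^{2}+\||D|^{s}u^{N}\|^{2}$, up to the Parseval normalisation), so that $|u^{N}|_{\infty}\lesssim C_{\infty}\|u^{N}\|_{s}$.

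I would then bound the quartic term by $\int_{-\pi}^{\pi}|u^{N}|^{4}\le|u^{N}|_{\infty}^{2}\,\|u^{N}\|^{2}\le C_{\infty}^{2}\|u^{N}\|_{s}^{2}\cdot 2I_{1}$ and substitute it back into the Hamiltonian identity. Combining the three displays produces an inequality of the shape
\begin{eqnarray*}
\|u^{N}\|_{s}^{2}\le(2E+I_{1})+C_{\infty}^{2}I_{1}\,\|u^{N}\|_{s}^{2},
\end{eqnarray*}
the precise constants emerging only after the normalisation bookkeeping. Since $u^{N}(t)\in S_{N}$, the norm $\|u^{N}(t)\|_{s}$ is finite, so I may legitimately move the last term to the left: the resulting coefficient $1-C_{\infty}^{2}I_{1}=C_{*}$ is nonnegative by the first condition in (\ref{fnlsL4}), and dividing by it gives exactly (\ref{fnlsL5}). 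The second condition $\tfrac{I_{1}}{2}+E\ge0$ is used only to guarantee that the numerator $2E+I_{1}$, and hence $C_{S}$, is a well-defined nonnegative real number.

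The main obstacle is the closing (absorption) step, which works only when the coefficient $C_{\infty}^{2}I_{1}$ multiplying $\|u^{N}\|_{s}^{2}$ on the right is strictly below $1$; this is exactly the smallness requirement $C_{*}>0$ (the degenerate case $C_{*}=0$ carries no information). What makes the bound usable uniformly in $t$ is that this coefficient depends only on the conserved mass $I_{1}$, so it never grows in time—the role of Proposition~\ref{propos21} is precisely to freeze both the quartic-forcing coefficient and the additive constant $2E+I_{1}$. The remaining delicate point is more bookkeeping than obstacle: carrying the Parseval normalisation constants through the Sobolev and quartic estimates so that the final bound appears in the exact form stated in (\ref{fnlsL5}).
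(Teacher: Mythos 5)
Your proposal is correct and follows essentially the same route as the paper: the $\ell^{1}$--$\ell^{2}$ Cauchy--Schwarz bound $|u^{N}|_{\infty}\leq C_{\infty}\|u^{N}\|_{s}$ (where $s>1/2$ makes $C_{\infty}$ finite), conservation of $I_{1}$ and $H$ to freeze both the quartic forcing coefficient and the additive constant, and the absorption step dividing by $C_{*}$. The only loose end you leave --- tracking the Parseval normalisation so the bound lands exactly on $C_{S}$ --- is the same bookkeeping the paper itself treats somewhat casually, and your explicit remarks on the finiteness of $\|u^{N}(t)\|_{s}$ for $u^{N}\in S_{N}$ and on the degenerate case $C_{*}=0$ are sound.
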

\begin{proof}
Note first that 
\begin{eqnarray}
|u^{N}|_{\infty}&\leq&\sum_{|k|\leq N}|\widehat{u^{N}}(k)|=\sum_{|k|\leq N}(1+|k|^{s})\frac{|\widehat{u^{N}}(k)|}{(1+|k|^{s})}\nonumber\\
&\leq& \left(\sum_{|k|\leq N}\frac{1}{(1+|k|^{s})^{2}}\right)^{1/2}||u^{N}||_{s}\leq C_{\infty}||u^{N}||_{s}.\label{fnlsL6}
\end{eqnarray}
On the other hand, using Proposition \ref{propos21}, we write
\begin{eqnarray}
\int_{-\pi}^{\pi}\frac{1}{2}||D|^{s}u^{N}|^{2}dx=E(u^{N})+\int_{-\pi}^{\pi}V(u^{N})dx,\label{fnlsL7}
\end{eqnarray}
where, from (\ref{fnls11a}) and (\ref{fnlsL6})
\begin{eqnarray}
\int_{-\pi}^{\pi}V(u^{N})dx\leq \frac{|u^{N}|_{\infty}^{2}}{2}I_{1}\leq \left(\frac{C_{\infty}||u^{N}||_{s}}{\sqrt{2}}\right)^{2}I_{1}.\label{fnlsL8}
\end{eqnarray}
Applying (\ref{fnlsL8}) to (\ref{fnlsL7}), we have
\begin{eqnarray*}
\frac{1}{2}||u^{N}||_{s}^{2}\leq \left(\frac{C_{\infty}||u^{N}||_{s}}{\sqrt{2}}\right)^{2}I_{1}+E+\frac{1}{2}I_{1},
\end{eqnarray*}
which, from (\ref{fnlsL4}), implies (\ref{fnlsL5}).
\end{proof}
\subsection{Convergence of the semidiscretization}
\label{sec22}
\begin{theorem}
\label{th22}
Let $u^{N}$ be the solution of (\ref{fnls2_2}) satisfying the hypotheses of Lemma \ref{lemmaL5}. Assume that
 the solution $u$ of (\ref{fnls2_0}) is in $H^{\mu}, \mu> s$, for $t\in [0,T]$. Then
\begin{eqnarray}
\max_{0 \leq t\leq T}||u^{N}-u||_{s}\lesssim N^{s-\mu}.\label{fnls2_10b}
\end{eqnarray}
\end{theorem}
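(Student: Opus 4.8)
The plan is to split the error in the classical way, writing
\[
u^{N}-u=\theta+\rho,\qquad \theta:=u^{N}-P_{N}u\in S_{N},\qquad \rho:=P_{N}u-u,
\]
and to estimate the two pieces separately. The projection part is immediate: by the approximation estimate (\ref{P3}) (in its fractional-order form, obtained by the same Parseval argument) one has $||\rho||_{s}\lesssim N^{s-\mu}||u||_{\mu}$ and $||\rho||\lesssim N^{-\mu}||u||_{\mu}$, both of the required size because $u(t)\in H^{\mu}$ uniformly on $[0,T]$. All the real work is the estimate of $\theta\in S_{N}$.

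First I would derive the equation governing $\theta$. Applying $P_{N}$ to (\ref{fnls2_0}) and using property (P1) (that $P_{N}$ commutes with $(-\partial_{xx})^{s}$) gives $iP_{N}u_{t}=(-\partial_{xx})^{s}P_{N}u-P_{N}f(u)$, whereas $u^{N}$ solves $iu_{t}^{N}=(-\partial_{xx})^{s}u^{N}-P_{N}f(u^{N})$. Subtracting,
\[
i\theta_{t}=(-\partial_{xx})^{s}\theta-P_{N}\left(f(u^{N})-f(u)\right),\qquad \theta(0)=0,
\]
the initial datum vanishing because $u^{N}(0)=P_{N}u_{0}=P_{N}u(0)$.

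The heart of the argument is an $L^{2}$ energy estimate that mirrors the mass conservation of Proposition \ref{propos21}. Taking the $L^{2}$ inner product of the $\theta$-equation with $\theta$ and keeping imaginary parts, the dispersive term drops out since $((-\partial_{xx})^{s}\theta,\theta)=\sum_{k}|k|^{2s}|\widehat{\theta}(k)|^{2}$ is real, and ${\rm Im}(i(\theta_{t},\theta))={\rm Re}(\theta_{t},\theta)=\frac{1}{2}\frac{d}{dt}||\theta||^{2}$; using property (P2) to discard $P_{N}$ against $\theta\in S_{N}$ leaves
\[
\frac{1}{2}\frac{d}{dt}||\theta||^{2}=-{\rm Im}(f(u^{N})-f(u),\theta)\leq ||f(u^{N})-f(u)||\,||\theta||.
\]
Now Proposition \ref{propos11} bounds $||f(u^{N})-f(u)||\leq 3(||u^{N}||_{\infty}^{2}+||u||_{\infty}^{2})||u^{N}-u||$, and the decisive point is that these $L^{\infty}$ factors are uniformly bounded in $t$ and $N$: Lemma \ref{lemmaL5} gives $||u^{N}(t)||_{s}\leq C_{S}$, hence $||u^{N}||_{\infty}\leq C_{\infty}C_{S}$ by (\ref{fnlsL6}), while $||u||_{\infty}$ is bounded through the embedding $H^{\mu}\hookrightarrow L^{\infty}$ ($\mu>s>1/2$). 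With $||u^{N}-u||\leq ||\theta||+||\rho||$ and $||\rho||\lesssim N^{-\mu}$ this yields a differential inequality $\frac{d}{dt}||\theta||\lesssim ||\theta||+N^{-\mu}$, and since $\theta(0)=0$ Gronwall's lemma produces $\max_{0\leq t\leq T}||\theta||\lesssim N^{-\mu}$.

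Finally I would upgrade this $L^{2}$ bound to the $H^{s}$ norm by means of the inverse inequality (\ref{Inverse}): as $\theta\in S_{N}$, $||\theta||_{s}^{2}=\sum_{|k|\leq N}(1+k^{2})^{s}|\widehat{\theta}(k)|^{2}\leq(1+N^{2})^{s}||\theta||^{2}$, so $||\theta||_{s}\lesssim N^{s}||\theta||\lesssim N^{s-\mu}$; combined with $||\rho||_{s}\lesssim N^{s-\mu}$ and the triangle inequality this gives (\ref{fnls2_10b}). The main obstacle is the nonlinear term, and the estimate closes only because the a priori $H^{s}$ bound of Lemma \ref{lemmaL5} --- itself a consequence of the conservation of $I_{1}$ and $H$ in Proposition \ref{propos21} --- freezes $||u^{N}||_{\infty}$ independently of $N$ and $t$, turning the locally Lipschitz bound of Proposition \ref{propos11} into a usable Gronwall input. (The same a priori bound, together with the local Lipschitz property of $F$ on $S_{N}$, guarantees that $u^{N}$ exists on the whole interval $[0,T]$.)
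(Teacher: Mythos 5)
Your proof is correct, but it follows a genuinely different route from the paper's. The paper performs the energy estimate directly in $H^{s}$: it tests the $\theta$-equation with $\varphi=\theta+(-\partial_{xx})^{s}\theta$, takes imaginary parts, and invokes the $H^{s}$-Lipschitz bound for $f$ (Proposition \ref{propos12}, which rests on the algebra property of $H^{l}$ for $l>1/2$) together with Lemma \ref{lemmaL5} to close a Gronwall inequality for $\frac{d}{dt}||\theta||_{s}^{2}$; the rate $N^{s-\mu}$ then comes from $||\rho||_{s}\lesssim N^{s-\mu}||u||_{\mu}$. You instead first run the estimate in $L^{2}$ only (testing with $\varphi=\theta$ and using the pointwise Lipschitz bound of Proposition \ref{propos11} with the $L^{\infty}$ control supplied by Lemma \ref{lemmaL5} and the embedding $H^{\mu}\hookrightarrow L^{\infty}$) --- which is precisely the paper's proof of Theorem \ref{th22b} --- and then upgrade $||\theta||\lesssim N^{-\mu}$ to $||\theta||_{s}\lesssim N^{s}||\theta||\lesssim N^{s-\mu}$ by the inverse inequality on $S_{N}$, handling $\rho$ separately via (\ref{P3}). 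Both routes yield the same rate, since in the paper's argument the loss of $N^{s}$ enters through $||\rho||_{s}$ while in yours it enters through the inverse inequality applied to $\theta$. What your approach buys is economy of hypotheses on the nonlinearity: you never need the $H^{s}$-Lipschitz property of $f$, only the $L^{2}$/$L^{\infty}$ one; what the paper's direct $H^{s}$ estimate buys is independence from inverse inequalities, which makes it the template reused later for the fully discrete $H^{s}$ analysis (Lemmas \ref{lemma33b} and \ref{lemma33c}). One small point worth making explicit: the inverse inequality (\ref{Inverse}) is stated in the paper for integer indices, but your Parseval computation $||\theta||_{s}^{2}\leq(1+N^{2})^{s}||\theta||^{2}$ supplies the fractional version you need, so there is no gap.
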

\begin{proof}
We write $u^{N}=u+\theta+\rho$, with
$$\theta=u^{N}-P_{N}u\in S_{N},\; \rho=P_{N}u-u.$$ Then, for $\varphi\in S_{N}$, it holds that
\begin{eqnarray}
(i\theta_{t},\varphi)-((-\partial_{xx})^{s}\theta,\varphi)&=&(iu^{N}_{t}-(-\partial_{xx})^{s}u^{N},\varphi)-(iu_{t}-(-\partial_{xx})^{s}u,\varphi)\nonumber\\
&=&(f(u)-f(u^{N}),\varphi).\label{fnls2_11}
\end{eqnarray}
We consider $\varphi=\theta+(-\partial_{xx})^{s}\theta$ in (\ref{fnls2_11}) and take imnaginary parts. Using that
\begin{eqnarray*}
{\rm Im}((-\partial_{xx})^{s}\theta,(-\partial_{xx})^{s}\theta)=0,
\end{eqnarray*}
Lemma \ref{lemmaL5} and (\ref{fnlsL1_1}) in Proposition \ref{propos12}, we have
\begin{eqnarray*}
\frac{1}{2}\frac{d}{dt}||\theta||_{s}^{2}&=&{\rm Im}(f(u)-f(u^{N}),\theta+(-\partial_{xx})^{s}\theta)\\
&=&{\rm Im}(f(u)-f(u^{N}),\theta)+{\rm Im}(|D|^{s}(f(u)-f(u^{N})),|D|^{s}\theta)\\
&\leq &C||u-u^{N}||_{s}||\theta||_{s},
\end{eqnarray*}
from some constant $C$ independent of $N$. From (\ref{P3}) there holds
\begin{eqnarray*}
\frac{1}{2}\frac{d}{dt}||\theta||_{s}^{2}&\leq&C||\theta+\rho||_{s}||\theta||_{s}\leq C(||\theta||_{s}^{2}+||\rho||_{s}||\theta||_{s})\\
&\leq & C\left(||\theta||_{s}^{2}+N^{s-\mu}||u||_{\mu}||\theta||_{s}\right)\\
&\leq & C(||\theta||_{s}^{2}+N^{2(s-\mu)}),
\end{eqnarray*} 
for some constant $C$ and $0\leq t\leq T$. Therefore, using Gronwall's lemma and that $\theta(0)=0$, (\ref{fnls2_10b}) follows.
\end{proof}
\begin{theorem}
\label{th22b}
Let $u^{N}$ be the solution of (\ref{fnls2_2}) satisfying the hypotheses of Lemma \ref{lemmaL5}. Assume that
 the solution $u$ of (\ref{fnls2_0}) is in $H^{\mu}, \mu> 0$, for $t\in [0,T]$. 
Then
\begin{eqnarray}
\max_{0 \leq t\leq T}||u^{N}-u||\lesssim N^{-\mu},\label{fnls2_10bb}
\end{eqnarray}
\end{theorem}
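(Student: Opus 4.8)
The plan is to run the same energy argument as in Theorem \ref{th22}, but now measuring the error in $L^{2}$ rather than $H^{s}$; this lets me replace the algebra estimate of Proposition \ref{propos12} by the plain Lipschitz bound of Proposition \ref{propos11}, and thereby relax the regularity requirement to $\mu>0$. I would keep the decomposition $u^{N}=u+\theta+\rho$ with $\theta=u^{N}-P_{N}u\in S_{N}$ and $\rho=P_{N}u-u$, together with the error equation (\ref{fnls2_11}), valid for every $\varphi\in S_{N}$. The only change at this stage is the choice of test function: I would take $\varphi=\theta$ (without the $(-\partial_{xx})^{s}$ weight used for the $H^{s}$ estimate) and take imaginary parts.

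Next I would identify the two terms on the left-hand side. By Parseval, $((-\partial_{xx})^{s}\theta,\theta)=\sum_{k}|k|^{2s}|\widehat{\theta}(k)|^{2}$ is real, so its imaginary part vanishes, while ${\rm Im}(i\theta_{t},\theta)={\rm Re}(\theta_{t},\theta)=\frac{1}{2}\frac{d}{dt}||\theta||^{2}$, exactly as in the proof of Proposition \ref{propos21}. This reduces (\ref{fnls2_11}) to the differential inequality
\begin{eqnarray*}
\frac{1}{2}\frac{d}{dt}||\theta||^{2}={\rm Im}(f(u)-f(u^{N}),\theta)\leq ||f(u)-f(u^{N})||\,||\theta||.
\end{eqnarray*}

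For the nonlinear factor I would apply the $L^{2}$ estimate (\ref{fnlsL1}) of Proposition \ref{propos11}, which requires uniform control of $||u^{N}||_{\infty}$ and $||u||_{\infty}$. The bound on the numerical solution is supplied uniformly in $N$ by Lemma \ref{lemmaL5} together with the embedding (\ref{fnlsL6}), namely $||u^{N}||_{\infty}\leq C_{\infty}||u^{N}||_{s}\leq C_{\infty}C_{S}$; this is precisely where the hypothesis $s>1/2$ enters. The bound on $||u||_{\infty}$ is inherited from the smoothness of the exact solution on $[0,T]$. Consequently $||f(u)-f(u^{N})||\leq C||u-u^{N}||=C||\theta+\rho||$, and substituting into the inequality above gives, after a Young inequality,
\begin{eqnarray*}
\frac{1}{2}\frac{d}{dt}||\theta||^{2}\leq C(||\theta||^{2}+||\rho||\,||\theta||)\leq C(||\theta||^{2}+||\rho||^{2}).
\end{eqnarray*}
The projection estimate (\ref{P3}) yields $||\rho||=||P_{N}u-u||\lesssim N^{-\mu}||u||_{\mu}$, so the right-hand side is $\leq C(||\theta||^{2}+N^{-2\mu})$. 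Gronwall's lemma and $\theta(0)=0$ then give $||\theta||\lesssim N^{-\mu}$, and the triangle inequality $||u^{N}-u||\leq ||\theta||+||\rho||$, used once more with (\ref{P3}), delivers (\ref{fnls2_10bb}).

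I expect the main subtlety to be the $L^{\infty}$ control rather than any computation. Since $\mu>0$ need not exceed $1/2$, Sobolev embedding does not bound $||u||_{\infty}$ from $||u||_{\mu}$, so boundedness of the exact solution must be used as an independent input; and the uniform-in-$N$ bound on $||u^{N}||_{\infty}$ is exactly what Lemma \ref{lemmaL5} provides. This is the structural ingredient (and the reason for the restriction $s>1/2$) that makes the plain $L^{2}$ Lipschitz estimate of Proposition \ref{propos11} usable here in place of the $H^{s}$ algebra argument of Theorem \ref{th22}, which in turn is what permits the weaker assumption $\mu>0$.
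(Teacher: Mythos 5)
Your proposal is correct and follows essentially the same route as the paper: testing the error equation with $\varphi=\theta$, taking imaginary parts, invoking Proposition \ref{propos11} together with the uniform $L^{\infty}$ control of $u^{N}$ from Lemma \ref{lemmaL5}, and closing with (\ref{P3}) and Gronwall. Your explicit remark that $\|u\|_{\infty}$ must be supplied as an independent input (since $\mu>0$ does not give Sobolev embedding) is a point the paper leaves implicit, but it does not change the argument.
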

\begin{proof}
With the same arguments as in Theorem \ref{th22}, but taking $\varphi=\theta$ in (\ref{fnls2_11}), from (\ref{fnlsL2b}) in Proposition \ref{propos11} and Lemma \ref{lemmaL5} we have
\begin{eqnarray*}
\frac{1}{2}\frac{d}{dt}||\theta||^{2}&=&{\rm Im}(f(u)-f(u^{N}),\theta)\\
&\leq &C||u-u^{N}||||\theta||,
\end{eqnarray*}
from some constant $C$ independent of $N$. Then, (\ref{P3}) implies the $L^{2}$ estimate (\ref{fnls2_10bb}).
\end{proof}
As a consequence, we have the following result of boundedness for the derivatives of $u_{t}^{N}$, that will be used in section \ref{sec3}.
\begin{proposition}
\label{propos22}
Let su assume the hypotheses of Theorem \ref{th22}. Given nonnegative integers $j$ and $l$, and provided $\mu\geq \max\{2, 2sj+l\}$, there is a constant $C$, independent of $N$, such that
\begin{eqnarray}
\max_{0\leq t\leq T}||\partial_{t}^{j}u^{N}||_{l}\leq C.\label{fnls2_13}
\end{eqnarray}
\end{proposition}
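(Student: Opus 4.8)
The plan is to proceed by induction on the order $j$ of time differentiation. Differentiating the semidiscrete evolution equation $u_t^N=F(u^N)$, with $F$ given by (\ref{fnls2_4*}), $j$ times and using that $(-\partial_{xx})^s$ and $P_N$ are linear and independent of $t$, one obtains
\begin{eqnarray*}
\partial_t^{j+1}u^N=-i(-\partial_{xx})^s\partial_t^{j}u^N+iP_N\partial_t^{j}f(u^N).
\end{eqnarray*}
The guiding heuristic behind the hypothesis $\mu\geq 2sj+l$ is that each time derivative is converted, through this identity, into one application of $(-\partial_{xx})^s$, at a cost of $2s$ spatial derivatives; thus $||\partial_t^{j}u^N||_l$ should behave like $||u^N||_{l+2sj}$, which is controlled as long as $l+2sj\leq\mu$.

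For the base case $j=0$ the assertion is that $\max_t||u^N||_l\leq C$ for $l\leq\mu$. Splitting $u^N=P_Nu+\theta$ with $\theta=u^N-P_Nu\in S_N$, I would estimate $||P_Nu||_l\leq||u||_\mu\leq C$ and, for the trigonometric polynomial $\theta$, combine the inverse inequality (\ref{Inverse}) with the bound $||\theta||_s\lesssim N^{s-\mu}$ extracted from Theorem \ref{th22} and (\ref{P3}); this gives $||\theta||_l\lesssim N^{\max\{l-s,0\}}||\theta||_s$, which stays bounded exactly when $l\leq\mu$. (Note $s\leq 1<2\leq\mu$, so the hypothesis $\mu>s$ of Theorem \ref{th22} is automatic.)

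For the inductive step, assume (\ref{fnls2_13}) for all orders $\leq j$ under the corresponding regularity thresholds. The linear term is handled by the fact that $(-\partial_{xx})^s$ maps $H^{l+2s}$ into $H^{l}$ boundedly, so that $||(-\partial_{xx})^s\partial_t^{j}u^N||_l\leq||\partial_t^{j}u^N||_{l+2s}$, which the inductive hypothesis controls precisely under $\mu\geq\max\{2,2sj+(l+2s)\}=\max\{2,2s(j+1)+l\}$. For the nonlinear term, write $f(u^N)=(u^N)^2\overline{u^N}$ and expand $\partial_t^{j}f(u^N)$ by the Leibniz rule into a finite sum of products $(\partial_t^{a}u^N)(\partial_t^{b}u^N)\overline{(\partial_t^{c}u^N)}$ with $a+b+c=j$. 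Taking $\widetilde{l}=\max\{l,1\}>1/2$ and using the boundedness of $P_N$ together with the algebra property of $H^{\widetilde{l}}$, each such product is bounded by $||\partial_t^{a}u^N||_{\widetilde{l}}\,||\partial_t^{b}u^N||_{\widetilde{l}}\,||\partial_t^{c}u^N||_{\widetilde{l}}$.

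The step I expect to be the main obstacle is verifying that every factor $||\partial_t^{a}u^N||_{\widetilde{l}}$, $a\leq j$, is covered by the inductive hypothesis, i.e.\ that $\mu\geq\max\{2,2sa+\widetilde{l}\}$ follows from the standing bound $\mu\geq\max\{2,2s(j+1)+l\}$. When $l\geq 1$ this is immediate, since $\widetilde{l}=l$ and $2sa+l\leq 2sj+l<2s(j+1)+l$. The delicate case is $l=0$, where $L^2$ fails to be an algebra and one is forced to raise the exponent to $\widetilde{l}=1$: there one needs $2sa+1<2s(j+1)$, which follows from $a\leq j$ together with $2s>1$, the latter being exactly the hypothesis $s>1/2$ inherited from Lemma \ref{lemmaL5}. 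Combining the linear and nonlinear bounds then gives $||\partial_t^{j+1}u^N||_l\leq C$ uniformly in $N$ and in $t\in[0,T]$, which closes the induction and establishes (\ref{fnls2_13}).
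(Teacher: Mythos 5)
Your proof is correct and follows essentially the same route as the paper: the base case $j=0$ via the splitting $u^N=(u^N-P_Nu)+P_Nu$, inverse inequalities on $S_N$, and the semidiscrete error estimates, followed by repeated differentiation of $\frac{d}{dt}u^N=F(u^N)$ for the inductive step. Your treatment is in fact more explicit than the paper's, which merely says the argument "can be used recursively"; your Leibniz expansion of $\partial_t^j f(u^N)$, the use of the algebra property of $H^{\widetilde{l}}$ with $\widetilde{l}=\max\{l,1\}$, and the verification that $s>1/2$ rescues the $l=0$ case are exactly the details needed to make that recursion rigorous.
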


\begin{proof}
Provided $\mu\geq \max\{2,l\}$, using (\ref{P3}), (\ref{Inverse}), (\ref{fnls2_10bb}), we have
\begin{eqnarray*}
||u^{N}||_{l}&\leq &||u-P_{N}u||_{l}+||P_{N}u-u^{N}||_{l}+||u||_{l}\\
&\lesssim&N^{l-\mu}||u||_{\mu}+N^{l}\left(||u-P_{N}u||+||u-u^{N}||\right)+||u||_{l}\\
&\lesssim&N^{l-\mu}||u||_{\mu}+N^{l-\mu}||u||_{\mu}+||u||_{l},
\end{eqnarray*}
which proves (\ref{fnls2_13}) when $j=0$. 
Note now that, from (\ref{fnls2_3})
\begin{eqnarray}
||\partial_{t}u^{N}||_{l}&\leq&||u^{N}||_{l+2s}+||P_{N}f(u^{N})||_{l}\nonumber\\
&\leq&||u^{N}||_{l+2s}+||f(u^{N})||_{l}.\label{fnls2_14}
\end{eqnarray}
Since $f(0)=0$, using Proposition \ref{propos12}, (\ref{fnls2_13}) when $j=0$, and (\ref{fnls2_14}), we have (\ref{fnls2_13}) when $j=1$ and $\mu\geq \max\{2, 2s+l\}$.
%
%
The argument above can be used recursively, by differentiating (\ref{fnls2_3}) $j-1$ times with respect to $t$ in order to bound the time derivatives of $f(u^{N})$ in a similar way.
\end{proof}

\section{Full discretization}
\label{sec3}
\subsection{Fully discrete scheme. First properties}
\label{sec31}
In this section we will analyze the fully discrete scheme based on approximating the ivp of the semidiscrete system (\ref{fnls2_3}) with the family of singly diagonally implicit RK methods
 \begin{equation}\label{44}
\begin{tabular}{c | c}
& $a_{ij}$\\ \hline
 & $b_{i}$
\end{tabular}=
\begin{tabular}{c | ccccc}
& $b_{1}/2$ & & &&\\
&  $b_{1}$ & $b_{2}/2$ &&&\\
&$b_{1}$&$b_{2}$&$\ddots$&&\\
&$\vdots$&$\vdots$ &&$\ddots$&\\
&$b_{1}$&$b_{2}$&$\cdots$&$\cdots$&$b_{q}/2$\\ \hline
 & $b_{1}$&$b_{2}$&$\cdots$&$\cdots$&$b_{q}$
\end{tabular},
\end{equation}
for nonzero real numbers $b_{j}, j=1,\ldots,q$, $q$ of the form $q=3^{p-1}, p\geq 1$, proposed by Yoshida, \cite{Yoshida1990}. The scheme (\ref{44}) can be formulated as a $q$-stage RK-composition method based on the implicit midpoint rule (IMR), which is the first element of the family (with $q=1$), cf. e.~g. \cite{HairerLW2004}. Symplecticity and symmetry properties of (\ref{44}) are obtained from the general construction in \cite{Yoshida1990}, applied to the IMR. Note also that some of the $b_{j}$ may be negative, which prevents (\ref{44}) to be A-stable, although it is absolutely stable in a strip of finite width in ${\rm Re}z\leq 0$. A summary of properties of (\ref{44}), collected from several references therein, can be checked in \cite{DD2021}. In particular, the $q$-stage method has order of accuracy $2p$. 

For $1\leq j\leq q$, let $\tau_{n}^{j}=t_{n}+k\sum_{m=1}^{j}b_{m}$. Note that, since some of the $b_{j}$ may be negative and some $\tau_{n}^{j}$ may overcome $t_{n+1}$, then, applying the reversibility for $t<0$ iof the fNLS equation, we may have to extend well-posedness of (\ref{fnls2_0}) and the definition of the semidiscrete solution (\ref{fnls2_2}) for intervals of the form $[-k,T+k]$, where (\ref{fnls2_10b}) and (\ref{fnls2_13}) hold, cf. \cite{DD2021} and Appendix \ref{appA}.

For the semidiscrete system (\ref{fnls2_3}), we will use the formulation of the resulting fully discrete scheme as a RK-Composition method with IMR, namely
\begin{eqnarray}
Y^{n,0}&=&U^{n},\nonumber\\
Y^{n,j}&=&Y^{n,j-1} +{kb_{j}}{F}\left(\frac{Y^{n,j}+Y^{n,j-1}}{2}\right),\; 1\leq j\leq q,\nonumber\\
U^{n+1}&=&Y^{n,q},\label{fnls3_1}
\end{eqnarray} 
and
for given $U^{n}\in S_{N}, 0\leq n\leq M-1$, approximation to $u^{N}(t_{n}), t_{n}=nk$, with $k>0$, $M$ an integer such that $T=Mk$,
$Y^{n,j}\in S_{N}, 1\leq j\leq q$, and $U^{0}=P_{N}u_{0}$. 

The following result is a consequence of considering the IMR as base scheme for the composition and its geometric properties.
\begin{lemma}
\label{lemma31} Provided $U^{n}, Y^{n,j}\in S_{N}, 1\leq j\leq q$ exist for $1\leq n\leq M$, then it holds that
\begin{eqnarray}
||U^{n}||&=&||U^{0}||,\; 0\leq n\leq M,\label{fnls3_2a}\\
{\rm Im}(U^{n},U_{x}^{n})&=&{\rm Im}(U^{0},U_{x}^{0}),\; 0\leq n\leq M.\label{fnls3_2b}
\end{eqnarray}
\end{lemma}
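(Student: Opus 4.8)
The plan is to exploit the midpoint structure of each individual stage in (\ref{fnls3_1}), which reduces everything to two algebraic identities satisfied by $F$ on $S_{N}$. Since $U^{n+1}=Y^{n,q}$ and $Y^{n,0}=U^{n}$, it suffices to prove that for every $1\le j\le q$ one has $||Y^{n,j}||=||Y^{n,j-1}||$ and ${\rm Im}(Y^{n,j},Y^{n,j}_{x})={\rm Im}(Y^{n,j-1},Y^{n,j-1}_{x})$; the two claimed conservation laws then follow by chaining over $j$ and inducting on $n$ from the base case $n=0$. So I would fix a stage and abbreviate $a=Y^{n,j}$, $b=Y^{n,j-1}$, $Z=(a+b)/2\in S_{N}$, and $d=a-b=kb_{j}F(Z)$, working throughout with the midpoint $Z$ and the increment $d$.

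For the $L^{2}$ norm I would expand $||a||^{2}-||b||^{2}$ using $a=Z+d/2$ and $b=Z-d/2$, obtaining $||a||^{2}-||b||^{2}=2{\rm Re}(Z,d)=2kb_{j}\,{\rm Re}(F(Z),Z)$, which vanishes by (\ref{fnls2_4**}). This gives (\ref{fnls3_2a}) at once.

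For the second invariant set $Q(u)={\rm Im}(u,u_{x})$. Expanding $Q(a)-Q(b)$ with the same substitution leaves $(Z,d_{x})+(d,Z_{x})$; integrating by parts (the boundary terms die by periodicity) turns $(Z,d_{x})$ into $-(Z_{x},d)$, while $(d,Z_{x})=\overline{(Z_{x},d)}$, so that $Q(a)-Q(b)=-2\,{\rm Im}(Z_{x},d)=-2kb_{j}\,{\rm Im}(Z_{x},F(Z))$. It then remains to prove the structural identity ${\rm Im}(Z_{x},F(Z))=0$ for all $Z\in S_{N}$. Splitting $F$ as in (\ref{fnls2_4*}) and using ${\rm Im}(iw)={\rm Re}(w)$, this reduces to showing ${\rm Re}(Z_{x},(-\partial_{xx})^{s}Z)=0$ and ${\rm Re}(Z_{x},P_{N}f(Z))=0$. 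The first follows from Parseval's identity, since $(Z_{x},(-\partial_{xx})^{s}Z)=2\pi\sum_{k}ik|k|^{2s}|\widehat{Z}_{k}|^{2}$ is purely imaginary. For the second I would invoke property (P2), which is legitimate because $Z_{x}\in S_{N}$ whenever $Z\in S_{N}$, to drop the projection, $(Z_{x},P_{N}f(Z))=(Z_{x},f(Z))$, and then note, exactly as in (\ref{fnls17}), that ${\rm Re}(Z_{x}\overline{f(Z)})=|Z|^{2}{\rm Re}(Z_{x}\overline{Z})=\tfrac{1}{4}\tfrac{d}{dx}|Z|^{4}$ is a perfect $x$-derivative, whose integral vanishes by periodicity.

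The midpoint bookkeeping is routine; the only place needing genuine care is the second invariant, where one must track the complex conjugations when pulling out the real scalar $kb_{j}$ and when integrating by parts, and must justify removing $P_{N}$ through $Z_{x}\in S_{N}$. Once the two identities ${\rm Re}(F(Z),Z)=0$ and ${\rm Im}(Z_{x},F(Z))=0$ are established for all $Z\in S_{N}$, both conservation laws are immediate, reflecting the fact that the composition of implicit midpoint substeps preserves the two relevant quadratic invariants of the semidiscrete flow.
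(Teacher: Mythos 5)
Your proof is correct and follows essentially the same route as the paper's: stage-by-stage conservation for the implicit midpoint substeps, reduced to the two structural identities ${\rm Re}(F(Z),Z)=0$ (i.e.\ (\ref{fnls2_4**})) and ${\rm Im}(Z_{x},F(Z))=0$ for $Z\in S_{N}$, the latter established via Parseval for the dispersive part and the perfect-derivative/periodicity argument for the cubic term after removing $P_{N}$ by (P2). The only difference is cosmetic bookkeeping (writing $Y^{n,j}=Z\pm d/2$ instead of pairing $Y^{n,j}-Y^{n,j-1}$ against $Y^{n,j}+Y^{n,j-1}$ and its derivative), which yields the same cancellations.
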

\begin{proof}
From (\ref{fnls3_1})
\begin{eqnarray}
(Y^{n,j}-Y^{n,j-1},Y^{n,j}+Y^{n,j-1})=2kb_{j}\left(F\left(\frac{Y^{n,j}+Y^{n,j-1}}{2}\right),\frac{Y^{n,j}+Y^{n,j-1}}{2}\right).\label{fnls3_3}
\end{eqnarray}
Note now that
\begin{eqnarray*}
{\rm Re}(Y^{n,j}-Y^{n,j-1},Y^{n,j}+Y^{n,j-1})=||Y^{n,j}||^{2}-||Y^{n,j-1}||^{2},
\end{eqnarray*}
and, from (\ref{fnls2_4**})
\begin{eqnarray*}
{\rm Re}\left(F\left(\frac{Y^{n,j}+Y^{n,j-1}}{2}\right),\frac{Y^{n,j}+Y^{n,j-1}}{2}\right)=0.
\end{eqnarray*}
Therefore, taking real part on both sides of (\ref{fnls3_3}) leads to $||Y^{n,j}||=||Y^{n,j-1}||,1\leq j\leq q$. Since $U^{n+1}=Y^{n,q}$ and $Y^{n,0}=U^{n}$, then (\ref{fnls3_2a}) follows. On the other hand, again from (\ref{fnls3_1})
\begin{eqnarray}
(Y^{n,j}-Y^{n,j-1},Y_{x}^{n,j}+Y_{x}^{n,j-1})=2kb_{j}\left(F\left(\frac{Y^{n,j}+Y^{n,j-1}}{2}\right),\frac{Y_{x}^{n,j}+Y_{x}^{n,j-1}}{2}\right).\label{fnls3_4}
\end{eqnarray}
Using (\ref{fnls2_4*}), note that
\begin{eqnarray*}
\left((-\partial_{xx})^{s}\left(\frac{Y^{n,j}+Y^{n,j-1}}{2}\right),\frac{Y_{x}^{n,j}+Y_{x}^{n,j-1}}{2}\right)\in i\mathbb{R};
\end{eqnarray*}
from (\ref{fnls11a}) and the periodicity of $Y^{n,j/2}:=\frac{Y^{n,j}+Y^{n,j-1}}{2}$, it holds that
\begin{eqnarray*}
\left(f\left(\frac{Y^{n,j}+Y^{n,j-1}}{2}\right),\frac{Y_{x}^{n,j}+Y_{x}^{n,j-1}}{2}\right)&=&\int_{-\pi}^{\pi}\left({\rm Re}f(Y^{n,j/2}){\rm Re}(Y_{x}^{n,j/2})\right.\\
&&\left.+{\rm Im}f(Y^{n,j/2}){\rm Im}(Y_{x}^{n,j/2})\right)dx\\
&=&\frac{1}{2}\int_{-\pi}^{\pi}\frac{d}{dx}V(Y^{n,j/2})dx=0;
\end{eqnarray*}
and
\begin{eqnarray*}
(Y^{n,j}-Y^{n,j-1},Y_{x}^{n,j}+Y_{x}^{n,j-1})&=&(Y^{n,j},Y_{x}^{n,j})-(Y^{n,j-1},Y_{x}^{n,j-1})\\
&&+2{\rm Re}(Y^{n,j},Y_{x}^{n,j-1}).
\end{eqnarray*}
Therefore, taking imaginary parts in (\ref{fnls3_4}), it holds that
\begin{eqnarray*}
{\rm Im}(Y^{n,j},Y_{x}^{n,j})={\rm Im}(Y^{n,j-1},Y_{x}^{n,j-1}),\; 1\leq j\leq q,
\end{eqnarray*}
which implies (\ref{fnls3_2b}).
\end{proof}
The existence of fully discrete solutions is analyzed in the following result.
\begin{proposition}
\label{propos32}
Given $U^{n}\in S_{N}$, the system (\ref{fnls3_1}) admits a solution $Y^{n,j}\in S_{N},1\leq j\leq q$.
\end{proposition}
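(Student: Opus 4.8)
The plan is to solve the stage equations in (\ref{fnls3_1}) one at a time, proceeding by induction on $j$ (with $Y^{n,0}=U^{n}$ given). Suppose $Z:=Y^{n,j-1}\in S_{N}$ is already known; the $j$th equation asks for $Y:=Y^{n,j}\in S_{N}$ with $Y=Z+kb_{j}F((Y+Z)/2)$. I would introduce the midpoint $W:=(Y+Z)/2\in S_{N}$ as the new unknown, so that $Y=2W-Z$, and rewrite the stage as the fixed-point equation
\begin{eqnarray*}
W=G(W):=Z+\frac{kb_{j}}{2}F(W),\quad W\in S_{N}.
\end{eqnarray*}
Since $S_{N}$ is finite-dimensional and $F$ (defined in (\ref{fnls2_4*})) is continuous --- indeed smooth --- on $S_{N}$, being the sum of the bounded linear operator $-i(-\partial_{xx})^{s}$ and the cubic term $iP_{N}f$, both of which preserve $S_{N}$, this is a fixed-point problem for a continuous self-map of a finite-dimensional space, amenable to a Brouwer-type argument.

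The key observation that makes this work for \emph{arbitrary} $k>0$, rather than only for $k$ small, is the conservation identity (\ref{fnls2_4**}), namely ${\rm Re}(F(W),W)=0$ for $W\in S_{N}$. I would set $P(W):=W-G(W)$ and test against $W$:
\begin{eqnarray*}
{\rm Re}(P(W),W)=||W||^{2}-{\rm Re}(Z,W)-\frac{kb_{j}}{2}{\rm Re}(F(W),W)=||W||^{2}-{\rm Re}(Z,W),
\end{eqnarray*}
so that by Cauchy--Schwarz ${\rm Re}(P(W),W)\geq ||W||(||W||-||Z||)$. Hence on any sphere $||W||=R$ with $R>||Z||$ one has ${\rm Re}(P(W),W)>0$.

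I would then invoke the standard consequence of Brouwer's fixed point theorem: a continuous map $P$ on a finite-dimensional inner product space with ${\rm Re}(P(W),W)\geq 0$ on the sphere $||W||=R$ must vanish somewhere in the closed ball $||W||\leq R$. (Otherwise $W\mapsto -RP(W)/||P(W)||$ would be a continuous self-map of that sphere, whose Brouwer fixed point $W^{*}$ would satisfy $P(W^{*})=-||P(W^{*})||W^{*}/R$ and hence ${\rm Re}(P(W^{*}),W^{*})=-R||P(W^{*})||<0$, a contradiction.) This produces $W^{*}\in S_{N}$ with $P(W^{*})=0$, i.e. a fixed point of $G$, and then $Y^{n,j}=2W^{*}-Z\in S_{N}$ solves the $j$th stage. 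Iterating over $j=1,\dots,q$ yields all the stages $Y^{n,j}\in S_{N}$.

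The main obstacle is not the algebra but identifying the right topological tool. A naive contraction-mapping argument on $G$ would force a step-size restriction of the type $k\lesssim N^{-2s}$, coming from the bound $||F(W)||\lesssim N^{2s}||W||+N||W||^{3}$ obtained via the inverse inequalities (\ref{Inverse}), whereas the statement is for general $k$. The zero-finding reformulation through $P$, combined with the purely imaginary structure of $(F(W),W)$, removes this restriction precisely because the a priori bound $||W^{*}||\leq ||Z||$ it produces is independent of $k$. I would also note that the proposition claims only existence, not uniqueness, so the possible multiplicity of fixed points of $G$ for large $k$ is irrelevant here, and that the inclusion $G(S_{N})\subseteq S_{N}$ is immediate since $(-\partial_{xx})^{s}$ and $P_{N}f$ both map $S_{N}$ into itself.
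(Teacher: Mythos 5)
Your proposal is correct and follows essentially the same route as the paper: both reduce each stage to the fixed-point equation for the midpoint $W=Z+\tfrac{kb_j}{2}F(W)$, use the identity ${\rm Re}(F(W),W)=0$ from (\ref{fnls2_4**}) to get ${\rm Re}(W-G(W),W)\geq \lVert W\rVert(\lVert W\rVert-\lVert Z\rVert)$ on a suitable sphere, and invoke the standard Brouwer zero-finding lemma (the paper cites \cite{BonaDKM1995} for it, working in the equivalent real-system formulation with $\langle\cdot,\cdot\rangle={\rm Re}(\cdot,\cdot)$), then iterate over the stages. The only differences are cosmetic: you spell out the proof of the Brouwer consequence and take a sphere of radius $R>\lVert Z\rVert$ rather than $R=\lVert Z\rVert$.
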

\begin{proof}
The proof is similar to that in \cite{DD2021} for the KdV equation, and it is based on the application of Brouwer's fixed-point theorem. For this purpose, we consider the alternative version  of (\ref{fnls3_1}) as a real system from the semidiscrete equations given by (\ref{fnls2_5}) and with inner product
\begin{eqnarray*}
\langle\begin{pmatrix}V_{1}\\W_{1}\end{pmatrix},\begin{pmatrix}V_{2}\\W_{2}\end{pmatrix}\rangle={\rm Re}(U_{1},U_{2}),\; U_{j}=V_{j}+iW_{j}, j=1,2.
\end{eqnarray*}
We define $\widetilde{G}:S_{N}\times S_{N}\rightarrow S_{N}\times S_{N}$ as
\begin{eqnarray*}
\widetilde{G}\begin{pmatrix}V\\W\end{pmatrix}=\begin{pmatrix}V\\W\end{pmatrix}-\begin{pmatrix}V^{n}\\W^{n}\end{pmatrix}-\frac{kb_{1}}{2}\begin{pmatrix}F_{1}(V,W)\\F_{2}(V,W)\end{pmatrix},
\end{eqnarray*}
where $U^{n}=V^{n}+iW^{n}$. From (\ref{fnls2_4**}), it holds that
\begin{eqnarray*}
\langle \widetilde{G}\begin{pmatrix}V\\W\end{pmatrix},\begin{pmatrix}V\\W\end{pmatrix}\rangle&=&\left|\left|\begin{pmatrix}V\\W\end{pmatrix}\right|\right|^{2}-\langle\begin{pmatrix}V^{n}\\W^{n}\end{pmatrix},\begin{pmatrix}V\\W\end{pmatrix}\rangle\\
&\geq&\left|\left|\begin{pmatrix}V\\W\end{pmatrix}\right|\right|\left(\left|\left|\begin{pmatrix}V\\W\end{pmatrix}\right|\right|-\left|\left|\begin{pmatrix}V^{n}\\W^{n}\end{pmatrix}\right|\right|\right).
\end{eqnarray*}
So if $$\left|\left|\begin{pmatrix}V\\W\end{pmatrix}\right|\right|=\left|\left|\begin{pmatrix}V^{n}\\W^{n}\end{pmatrix}\right|\right|,$$ then
$$\langle \widetilde{G}\begin{pmatrix}V\\W\end{pmatrix},\begin{pmatrix}V\\W\end{pmatrix}\rangle\geq 0.$$ Since $\widetilde{G}$ is continuous in $S_{N}\times S_{N}$, then Brower's fixed-point theorem applies, \cite{BonaDKM1995}, and then there exists $Z\in S_{N}\times S_{N}$ with $||Z||=||U^{n}||$ and such that $\widetilde{G}(Z)=0$, that is $Z-U^{N}=\frac{kb_{1}}{2}F(Z)$. This implies the existence of $Y^{n,1}$. The proof of existence of $Y^{n,i}, 2\leq i\leq s$ satisfying (\ref{fnls3_1}) follows recursively with an analogous argument.
\end{proof}
The following estimate will be used to establish uniqueness of solution of (\ref{fnls3_1}) and  in the proof of convergence in section \ref{sec32}.

\begin{lemma}
\label{lemma32}
Let $R=||U^{0}||$. If
\begin{eqnarray}
3R^{2}kN\max_{1\leq j\leq q}|b_{j}|<1,\label{fnlsX2}
\end{eqnarray}
then the solutions $Y_{n,j}, 0\leq j\leq q$, of (\ref{fnls3_1}) are unique. Furthermore, under the conditions of Lemma \ref{lemmaL5} and for $k$ small enough, if $U^{0}=u^{N}(0)$, then there is a constant $C_{U}$, independent of $N$ such that
\begin{eqnarray}
\max_{0\leq n\leq M}||U^{n}||_{s}\leq C_{U}.\label{fnlsX2b}
\end{eqnarray}
\end{lemma}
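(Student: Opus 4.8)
I would prove the two assertions separately. \emph{Uniqueness} follows from a contraction estimate carried out stage by stage. Fix $n$, assume $Y^{n,j-1}$ determined, and let $Y^{n,j},\widetilde{Y}^{n,j}$ solve the $j$-th equation of (\ref{fnls3_1}); set $D=Y^{n,j}-\widetilde{Y}^{n,j}$, $a=(Y^{n,j}+Y^{n,j-1})/2$, $b=(\widetilde{Y}^{n,j}+Y^{n,j-1})/2$, so $a-b=D/2$ and $D=kb_{j}(F(a)-F(b))$. Taking the inner product with $D$ and real parts, the dispersive part of $F$ drops out: by (\ref{fnls2_4*}), $F(a)-F(b)=-i(-\partial_{xx})^{s}(D/2)+iP_{N}(f(a)-f(b))$, and since $((-\partial_{xx})^{s}D,D)\in\mathbb{R}$ one has ${\rm Re}(-i((-\partial_{xx})^{s}D,D))=0$, leaving $||D||^{2}=-kb_{j}\,{\rm Im}(f(a)-f(b),D)\le k|b_{j}|\,||f(a)-f(b)||\,||D||$. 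I would then bound $||f(a)-f(b)||$ by Proposition \ref{propos11}, estimate $|a|_{\infty}^{2}+|b|_{\infty}^{2}\le N(||a||^{2}+||b||^{2})$ by the inverse inequality (\ref{Inverse}), and use Lemma \ref{lemma31}, which gives $||a||,||b||\le R$ since every solution of (\ref{fnls3_1}) preserves the $L^{2}$ norm. This produces $||D||^{2}\le 3R^{2}kN\max_{j}|b_{j}|\,||D||^{2}$, so (\ref{fnlsX2}) forces $D=0$; an induction on $j$ propagates uniqueness from $Y^{n,0}=U^{n}$ to $U^{n+1}=Y^{n,q}$.

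For the a priori bound (\ref{fnlsX2b}) the direct stagewise control of $||U^{n}||_{s}$ is hopeless: through Proposition \ref{propos12} the nonlinearity only yields a quartic estimate of the type $\frac{d}{dt}||U||_{s}^{2}\lesssim||U||_{s}^{4}$, which may blow up in finite time and cannot be closed uniformly on $[0,T]$. Instead I would use the exact conservation of $I_{1}$ (Lemma \ref{lemma31}) together with the algebraic inequality behind Lemma \ref{lemmaL5}. The key observation is that, the scheme being built from the midpoint rule, the \emph{quadratic} (dispersive) part of the Hamiltonian (\ref{fnls14c}) is propagated without error. Writing $Z=(Y^{n,j}+Y^{n,j-1})/2$, the increment of $H$ across a stage splits as $H(Y^{n,j})-H(Y^{n,j-1})=kb_{j}\langle\delta H(Z),F(Z)\rangle-E_{V}$, where $E_{V}$ is the midpoint quadrature error of the quartic term $\int V$. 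Since $F(Z)=-iP_{N}\delta H(Z)$, the first term vanishes, so the energy drift per step is exactly $-E_{V}$ and, crucially, carries \emph{no} factor of $(-\partial_{xx})^{s}$; it is controlled by $|Z|_{\infty}^{2}\,||\Delta||^{2}$ with $\Delta=U^{n+1}-U^{n}$.

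The bound would then follow by a continuation argument. Assuming as induction hypothesis $||U^{m}||_{s}\le M_{0}$ for $m\le n$, with $M_{0}$ chosen slightly larger than the constant $C_{S}$ of Lemma \ref{lemmaL5}, I would use $|Z|_{\infty}\le C_{\infty}M_{0}$ and the smallness of $k$ (together with the CFL-type restriction already present in (\ref{fnlsX2})) to show that the accumulated drift $\sum_{m\le n}|E_{V}^{m}|$ stays small as $k\to0$, uniformly in $N$, so that $H(U^{n})\le H(U^{0})+o(1)$. Applying the inequality of Lemma \ref{lemmaL5} to $U^{n}\in S_{N}$---legitimate because its proof uses only $s>1/2$, the (conserved) value of $I_{1}$ and the value of $H$, all available for $U^{n}$, and because $C_{*}$ is unchanged since $I_{1}$ is conserved---gives $||U^{n}||_{s}^{2}\le C_{S}^{2}+o(1)\le M_{0}^{2}$, closing the induction and yielding $C_{U}=M_{0}$ independent of $N$.

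The step I expect to be the genuine obstacle is the uniform-in-$N$ control of the energy drift $\sum_{m}|E_{V}^{m}|$. The increments satisfy $\Delta=kb_{j}F(Z)$ with $||F(Z)||$ as large as $||(-\partial_{xx})^{s}Z||$, so a crude bound on $||\Delta||$ does not decay when $s>1/2$ under a mild restriction such as $kN\lesssim1$. The argument must instead combine the exact $L^{2}$ preservation $||\Delta||\le2R$ with the symplectic and symmetric character of (\ref{fnls3_1}) coming from Yoshida's construction, so that the energy behaves like an invariant without secular drift; this is the discrete analogue of the estimate performed for the KdV equation in \cite{DD2021}, and it is where I would concentrate the technical effort.
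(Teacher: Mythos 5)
Your uniqueness argument is correct and is essentially the paper's: after taking real parts the dispersive part of $F$ drops out, Proposition \ref{propos11} together with the inverse inequality (\ref{Inverse}) and the $L^{2}$ conservation from Lemma \ref{lemma31} produce exactly the factor $3R^{2}kN|b_{j}|$, and (\ref{fnlsX2}) forces the difference of two stage solutions to vanish, stage by stage.

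The second half is where the proposal fails. You replace a direct $H^{s}$ estimate by conservation of $I_{1}$ plus near-conservation of $H$ and the coercivity computation of Lemma \ref{lemmaL5}, but the decisive step --- a uniform-in-$N$, $o(1)$ bound on the accumulated Hamiltonian drift $\sum_{m}|E_{V}^{m}|$ --- is precisely the step you leave open. As you yourself observe, the stage increment $\Delta=kb_{j}F(Z)$ only satisfies $\|\Delta\|\lesssim kN^{s}\|Z\|_{s}$, which is $O(1)$ under a restriction like $kN\lesssim 1$, so the per-step drift is $O(1)$ and the sum over $T/k$ steps is $O(1/k)$; appealing to ``the symplectic and symmetric character'' of (\ref{44}) is not a proof (a genuine no-drift statement would need backward-error or modulated-Fourier machinery under a far stronger restriction such as $kN^{2s}\ll 1$, none of which is available here), and the paper establishes no discrete energy conservation for (\ref{fnls3_1}) at all --- Lemma \ref{lemma31} covers only $I_{1}$ and $I_{2}$. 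The paper's actual route is the direct stagewise control that you dismiss as hopeless: the midpoint value of each stage is written as a fixed point of $G(Z)=\mathcal{A}_{j}\bigl(Y^{n,j-1}+i\tfrac{kb_{j}}{2}P_{N}f(Z)\bigr)$ with $\mathcal{A}_{j}=\bigl(1-i\tfrac{kb_{j}}{2}(-\partial_{xx})^{s}\bigr)^{-1}$; since $\mathcal{A}_{j}$ has $H^{s}$-operator norm at most one and $f$ is locally Lipschitz in $H^{s}$ by Proposition \ref{propos12}, $G$ is a contraction on an $H^{s}$-ball for $k$ small, and the $H^{s}$ norm of each stage is controlled by that of the previous one with no quartic Gronwall inequality and no energy argument. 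As it stands your proposal does not establish (\ref{fnlsX2b}).
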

\begin{proof}
Let $Z_{j}\in S_{N}, j=1,2,$ be two solutions of the first equation of the first equation of (\ref{fnls3_1}). Then, from Lemma \ref{lemma31},  $||Z_{1}||=||Z_{2}||=||U^{n}||=R$, and
\begin{eqnarray}
Z_{1}-Z_{2}=kb_{1}\left(F\left(\frac{Z_{1}+U^{n}}{2}\right)-F\left(\frac{Z_{2}+U^{n}}{2}\right)\right),\label{fnls3_5a}
\end{eqnarray}
where $F$ is given by (\ref{fnls2_4*}). Taking the real part of the inner product of (\ref{fnls3_5a}) with $Z_{1}-Z_{2}$ and after some computations,  we have
\begin{eqnarray*}
||Z_{1}-Z_{2}||^{2}&=&kb_{1}\left({\rm Re}\left(\frac{-i}{2}(-\partial_{xx})^{s}(Z_{1}-Z_{2}),Z_{1}-Z_{2}\right)\right.\\
&&\left.+{\rm Re}\left(\left(i f\left(\frac{Z_{1}+U^{n}}{2}\right)-i f\left(\frac{Z_{2}+U^{n}}{2}\right)\right),Z_{1}-Z_{2}\right)\right).
\end{eqnarray*}
Using (\ref{fnlsL2b}) and (\ref{Inverse}),  we have
\begin{eqnarray*}
||Z_{1}-Z_{2}||\leq \frac{3}{4}kN|b_{1}|\left(||Z_{1}||^{2}+||Z_{2}||^{2}+2||U^{n}||^{2}\right)||Z_{1}-Z_{2}||.
\end{eqnarray*}
Then, from (\ref{fnls3_2a}), condition (\ref{fnlsX2}) implies uniqueness of $Y^{n,1}$. The argument can be used recursively to prove uniqueness for the rest of the $Y^{n,j}, 2\leq j\leq q$.

We now prove the second part of the lemma. Note first that the operator
$$\mathcal{A}_{1}=(1-ik\frac{b_{1}}{2}(-\partial_{xx})^{s})^{-1}$$ maps $H^{s}$ into $H^{3s}$ and its norm is less than or equals one. We consider the mapping $G_{0}:S_{N}\rightarrow S_{N}$ given by
\begin{eqnarray*}
G_{0}(Z)=\mathcal{A}_{1}\left(U^{0}+ik\frac{b_{1}}{2}P_{N}f(Z)\right).
\end{eqnarray*}
Let $R_{0}:=C_{S}$ be given in (\ref{fnlsL5}) and $M_{0}>R_{0}$. Due to Proposition \ref{propos12} and since the norm of $\mathcal{A}_{1}$ is less than or equals one, there is a constant $C=C(M_{0})$ such that if $Z_{j}\in S_{N}, ||Z_{j}||_{s}\leq M_{0}, j=1,2$, then
\begin{eqnarray*}
||G_{0}(Z_{1})-G_{0}(Z_{2})||_{s}\leq \frac{k}{2}C||Z_{1}-Z_{2}||_{s},
\end{eqnarray*}
and, consequently, for $k$ sufficiently small, $G_{0}$ is contractive. Since $f(0)=0$ and from the hypothesis and the choice of $M_{0}$, we have
\begin{eqnarray*}
||G_{0}(0)||_{s}\leq ||U^{0}||_{s}<M_{0},
\end{eqnarray*}
and then there is a unique $Z_{0}\in S_{N}$ with $||Z_{0}||\leq M_{0}$ and such that $G_{0}(Z_{0})=Z_{0}$. If we define $Y^{0,1}=2Z_{0}-U^{0}$, then there is a constant $C_{1}$, independent of $N$, such that $||Y^{0,1}||_{s}\leq R_{1}$. By considering
\begin{eqnarray*}
&&\mathcal{A}_{2}=(1-ik\frac{b_{2}}{2}(-\partial_{xx})^{s})^{-1},\\
&&G_{1}:S_{N}\rightarrow S_{N},\; G_{1}(Z)=\mathcal{A}_{2}\left(Y^{0,1}+ik\frac{b_{2}}{2}P_{N}f(Z)\right),
\end{eqnarray*}
and $M_{1}>R_{1}$, the same reasoning proves the existence of $Y^{0,2}\in S_{N}$ such that $||Y^{0,2}||_{s}\leq R_{2}$, for some constant $R_{2}$ independent of $N$. Then from the operators
\begin{eqnarray*}
&&\mathcal{A}_{j}=(1-ik\frac{b_{j}}{2}(-\partial_{xx})^{s})^{-1},\\
&&G_{j-1}:S_{N}\rightarrow S_{N},\; G_{j-1}(Z)=\mathcal{A}_{j}\left(Y^{0,1}+ik\frac{b_{2}}{2}P_{N}f(Z)\right),\; 3\leq j\leq q,
\end{eqnarray*}
we can use the same arguments for all the internal stages and obtain the existence of a constant $C_{1}$, independent of $N$, such that $||U^{1}||_{s}=||Y^{0,q}||_{s}\leq C_{1}$, completing the first step of the recurrence process and showing the way to prove the general step from $U^{n}$ to $U^{n+1}$ and (\ref{fnlsX2b}).
\end{proof}
\subsection{Convergence of the fully discrete scheme}
\label{sec32}
In this section, the question of uniqueness of solution of (\ref{fnls3_1}) as well as a $L^{2}$ and $H^{s}, s\in (1/2,1]$, error estimates with respect to the solution $u$ of (\ref{fnls2_0}) will be analyzed. Let $u^{N}$ be the solution of the semidiscrete system (\ref{fnls2_3}). Let $Z^{n}=u^{N}(t_{n}), 0\leq n\leq M$, and for $0\leq j\leq q, 0\leq n\leq M-1$, let $Z^{n,j}\in S_{N}$ be defined as
\begin{eqnarray}
Z^{n,0}&=&Z^{n},\nonumber\\
Z^{n,j}&=&Z^{n,j-1} +{kb_{j}}{F}\left(\frac{Z_{n}^{j}+Z^{n,j-1}}{2}\right),\; 1\leq j\leq q,\label{fnls3_6}
\end{eqnarray} 
where $F$ is given by (\ref{fnls2_4*}). The local temporal error of (\ref{fnls3_1}) at $t_{n}$, $\Theta^{n}\in S_{N}$, is defined as
\begin{eqnarray}
\Theta^{n}=Z^{n+1}-Z^{n,s}=u^{N}(t_{n+1})-Z^{n,s}.\label{fnls3_6a}
\end{eqnarray}
Due to Proposition \ref{propos11}, consistency of the method will require $L^{\infty}$ bounds of the $Z_{n}^{j}$. This is analyzed in the following result.
\begin{lemma}
\label{lemma33a}
Let $Z^{n,j}$ be defined by (\ref{fnls3_6}). Assume that $\mu$ is large enough and $k$ is sufficiently small satisfying (\ref{fnlsX2}). Then
\begin{eqnarray}
\max_{j,n}|Z^{n,j}|_{\infty}\leq C_{Z},\label{fnls_B0}
\end{eqnarray}
for some constant $C_{Z}$.
\end{lemma}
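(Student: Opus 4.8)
The plan is to control each stage $Z^{n,j}$ in the $H^s$ norm, uniformly in $n$ and $j$, and then to convert this into the desired $L^\infty$ bound via the Sobolev-type estimate $|\varphi|_\infty\le C_\infty\|\varphi\|_s$ for $\varphi\in S_N$ (valid for $s>1/2$), which is exactly the computation \eqref{fnlsL6} in the proof of Lemma \ref{lemmaL5}. The key point is that the recursion \eqref{fnls3_6} is formally identical to the one treated in the second part of Lemma \ref{lemma32}: the only difference is that here the initial value of the recursion is $Z^{n,0}=u^N(t_n)$ instead of a computed iterate. Hence the $H^s$ fixed-point argument of that proof can be reused almost verbatim, which is why a clean bound is expected.

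First I would record the base estimate: by Lemma \ref{lemmaL5} one has $\|Z^{n,0}\|_s=\|u^N(t_n)\|_s\le C_S$ for every $n$, so the starting bound is uniform in $n$. Next, for $1\le j\le q$ I would rewrite the $j$-th step of \eqref{fnls3_6} in terms of the midpoint $Z:=(Z^{n,j}+Z^{n,j-1})/2$, which satisfies $Z=Z^{n,j-1}+\tfrac{kb_j}{2}F(Z)$. Using the expression \eqref{fnls2_4*} for $F$, this is equivalent to the fixed-point equation $Z=G_{j-1}(Z)$ with $G_{j-1}(Z)=\mathcal{A}_j(Z^{n,j-1}+i\tfrac{kb_j}{2}P_Nf(Z))$, where $\mathcal{A}_j$ is the Fourier multiplier introduced in Lemma \ref{lemma32}, whose $H^s\to H^s$ norm is at most one. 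Since $H^s$ is an algebra for $s>1/2$ and $f(0)=0$, estimate \eqref{fnlsL1_1} of Proposition \ref{propos12} yields $\|f(Z)\|_s\le C\|Z\|_s^3$ together with $\|f(Z_1)-f(Z_2)\|_s\le C(\|Z_1\|_s^2+\|Z_2\|_s^2)\|Z_1-Z_2\|_s$. Consequently $G_{j-1}$ maps a ball of $H^s$ of radius $M_{j-1}$ slightly larger than $\|Z^{n,j-1}\|_s$ into itself and is a contraction there, provided $k$ is small; under \eqref{fnlsX2} the resulting fixed point is, by Lemma \ref{lemma32}, the midpoint of the unique $Z^{n,j}$. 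This produces a bound $\|Z\|_s\le M_{j-1}$ on the midpoint, and therefore $\|Z^{n,j}\|_s=\|2Z-Z^{n,j-1}\|_s\le 2M_{j-1}+\|Z^{n,j-1}\|_s=:R_j$.

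Iterating on $j$ from the base value $R_0=C_S$ then gives constants $R_1,\dots,R_q$ depending only on $C_S$, on the coefficients $b_j$, and on $k$, but not on $N$ or $n$. Applying \eqref{fnlsL6} to each $Z^{n,j}\in S_N$ finally gives $|Z^{n,j}|_\infty\le C_\infty\|Z^{n,j}\|_s\le C_\infty\max_{1\le j\le q}R_j=:C_Z$, which is \eqref{fnls_B0}.

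The step I expect to be the main obstacle is not any single estimate but rather guaranteeing that all the radii $M_{j-1}$ and $R_j$ — and the smallness threshold on $k$ making each $G_{j-1}$ contractive — can be chosen independently of both $N$ and $n$. Independence of $N$ is inherited from Lemma \ref{lemma32}, where the bound $\|\mathcal{A}_j\|\le 1$ replaces any use of an inverse inequality, so no $N$-dependent factor enters the recursion; independence of $n$ follows because the base bound $R_0=C_S$ supplied by Lemma \ref{lemmaL5} is uniform in time, and the finitely many stages then propagate this uniformity. Once this is secured, taking the maximum over the $q$ stages yields the constant $C_Z$ in \eqref{fnls_B0}.
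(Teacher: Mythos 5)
Your argument is correct, but it takes a genuinely different route from the paper's. The paper proves (\ref{fnls_B0}) by a consistency argument: it inserts $u^{N}(\tau_{n}^{j})$ into the stage equation (\ref{fnls3_6}), bounds the resulting defect $\zeta^{n,j}$ by $O(k^{3})$ via Taylor expansions controlled by Proposition \ref{propos22} (this is where ``$\mu$ large enough'' is actually used), and then an energy estimate together with the inverse inequality (\ref{Inverse}) and hypothesis (\ref{fnlsX2}) gives $\|Z^{n,j}-u^{N}(\tau_{n}^{j})\|\lesssim k^{3}$, from which the $L^{\infty}$ bound follows because $|u^{N}|_{\infty}$ is uniformly bounded. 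You instead recycle the $H^{s}$ contraction argument from the second part of Lemma \ref{lemma32}, seeded by the a priori bound $\|u^{N}(t_{n})\|_{s}\leq C_{S}$ of Lemma \ref{lemmaL5}, and finish with the embedding (\ref{fnlsL6}). Your route is shorter, avoids Taylor expansions entirely (so it needs essentially no regularity of $u$ beyond the hypotheses of Lemma \ref{lemmaL5}), and delivers the $H^{s}$ bound of Lemma \ref{lemma33b} as a byproduct, so that lemma becomes an immediate corollary. What the paper's route buys in exchange is the quantitative proximity $\|Z^{n,j}-u^{N}(\tau_{n}^{j})\|\lesssim k^{3}$ --- a statement of the same nature as the consistency estimates of Appendix \ref{appA} --- and a constant $C_{Z}$ that is essentially $\max_{t}|u^{N}(t)|_{\infty}$ rather than the $3^{q}$-type constant produced by your recursion $R_{j}=2M_{j-1}+R_{j-1}$. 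Two points you should make explicit: (i) identifying your contraction fixed point with the midpoint of the $Z^{n,j}$ of (\ref{fnls3_6}) rests on uniqueness of the stage equation, which does follow from the argument of Lemma \ref{lemma32} applied to the recursion started at $u^{N}(t_{n})$, since $\|Z^{n,j}\|=\|u^{N}(0)\|=R$ by the analogue of Lemma \ref{lemma31} (the paper records this as property (a1) in the appendix); and (ii) your base bound requires the hypotheses of Lemma \ref{lemmaL5} and $s>1/2$, which are not written in the statement of the lemma but are equally implicit in the paper's own proof through its use of Proposition \ref{propos22}.
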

\begin{proof}
Let $\zeta^{n,1}\in S_{N}$ be satisfying
\begin{eqnarray*}
u^{N}(\tau_{n}^{1})=u^{N}+kb_{1}F\left(\frac{u^{N}(\tau_{n}^{1})+u^{N}}{2}\right)+\zeta^{n,1},
\end{eqnarray*}
where in the sequel $u^{N}=u^{N}(t_{n}), u_{t}^{N}=u_{t}^{N}(t_{n})$, etc; that is, \cite{DD2021}
\begin{eqnarray*}
\zeta^{n,1}&=&u^{N}(\tau_{n}^{1})-u^{N}-kb_{1}\left(-i(-\partial_{xx})^{s}\left(\frac{u^{N}(\tau_{n}^{1})+u^{N}}{2}\right)\right.\nonumber\\
&&\left.+iP_{N}f\left(\frac{u^{N}(\tau_{n}^{1})+u^{N}}{2}\right)\right)\\
&=&u^{N}(\tau_{n}^{1})-u^{N}-kb_{1}\left(\left(\frac{u^{N}_{t}(\tau_{n}^{1})+u_{t}^{N}}{2}\right)\right.\\
&&\left.-iP_{N}\left(\frac{f(u^{N}(\tau_{n}^{1}))+f(u^{N})}{2}-f\left(\frac{u^{N}(\tau_{n}^{1})+u^{N}}{2}\right)\right)\right)=\omega_{1}^{n}-\omega_{2}^{n},
\end{eqnarray*}
where
\begin{eqnarray*}
\omega_{1}^{n}&=&u^{N}(\tau_{n}^{1})-u^{N}-kb_{1}\left(\left(\frac{u^{N}_{t}(\tau_{n}^{1})+u_{t}^{N}}{2}\right)\right)\\
&=&kb_{1}u_{t}^{N}+\frac{k^{2}b_{1}^{2}}{2}u_{tt}^{N}+\rho_{3}-kb_{1}\left(u_{t}^{N}+\frac{kb_{1}}{2}u_{tt}^{N}+\rho_{2}\right),
\end{eqnarray*}
with
\begin{eqnarray*}
||\rho_{2}||\lesssim k^{2}\max_{t}||\partial_{t}^{2}u^{N}||,\;
||\rho_{3}||\lesssim k^{3}\max_{t}||\partial_{t}^{3}u^{N}||.
\end{eqnarray*}
Using Proposition \ref{propos22} with $\mu$ large enough, then
\begin{eqnarray}
||\omega_{1}^{n}||\lesssim k^{3}.\label{fnls_B1}
\end{eqnarray}
On the other hand, $\omega_{2}^{n}=kb_{1}(\rho^{n}-\sigma^{n})$, where
\begin{eqnarray*}
\rho^{n}&=&iP_{N}\left(\frac{f(u^{N}(\tau_{n}^{1}))+f(u^{N})}{2}\right)=\frac{1}{2}\left(u_{t}^{N}(\tau_{n}^{1})+u_{t}^{N}\right)\\
&&+\frac{i}{2}(-\partial_{xx}^{s}\left(u^{N}(\tau_{n}^{1})+u^{N}\right).
\end{eqnarray*}
Let $s_{n}^{1}=\frac{1}{2}(t_{n}+\tau_{n}^{1})=t_{n}+\frac{kb_{1}}{2}$. Expanding $u_{t}^{N}(\tau_{n}^{1})$ and $u_{t}^{N}$ about $s_{n}^{1}$ and using Proposition \ref{propos22} we have, for $\mu$ large enough and $k$ sufficiently small, that
\begin{eqnarray*}
\rho^{n}=u_{t}^{N}(s_{n}^{1})+i(-\partial_{xx})^{s}u^{N}(s_{n}^{1})+\widetilde{\rho^{n}},
\end{eqnarray*}
with
\begin{eqnarray}
||\widetilde{\rho^{n}}||\lesssim k^{2}.\label{fnls_B2}
\end{eqnarray}
Let
\begin{eqnarray*}
\eta^{n,1}=\frac{1}{2}\left(u(\tau_{n}^{1})+u^{N}\right)-u^{N}(s_{n}^{1}).
\end{eqnarray*}
Taylor's expansions of  $u^{N}(\tau_{n}^{1})$ and $u^{N}$ about $s_{n}^{1}$ and Proposition \ref{propos22} lead to
\begin{eqnarray}
||\eta^{n,1}||\lesssim k^{2},\label{fnls_B3}
\end{eqnarray}
for $\mu$ large enough. Then
\begin{eqnarray*}
\sigma^{n}&=&iP_{N}f\left(\frac{u^{N}(\tau_{n}^{1})+u^{N}}{2}\right)=iP_{N}f\left(u^{N}(s_{n}^{1})+\eta^{n,1}\right)\\
&=&iP_{N}\left(f\left(u^{N}(s_{n}^{1})+\eta^{n,1}\right)-f\left(u^{N}(s_{n}^{1})\right)\right)+iP_{N}f\left(u^{N}(s_{n}^{1})\right)\\
&=&\widetilde{\sigma^{n}}+iP_{N}f\left(u^{N}(s_{n}^{1})\right).
\end{eqnarray*}
Now, using Proposition \ref{propos11}, Proposition \ref{propos22}, and (\ref{fnls_B3}), for $\mu$ large enough, we have
\begin{eqnarray}
||\widetilde{\sigma^{n}}||\leq C(R)||\eta^{n,1}||\lesssim k^{2}.\label{fnls_B4}
\end{eqnarray}
Therefore
\begin{eqnarray*}
\omega_{2}^{n}&=&kb_{1}\left(u_{t}^{N}(s_{n}^{1})+i(-\partial_{xx})^{s}u^{N}(s_{n}^{1})-iP_{N}f\left(u^{N}(s_{n}^{1})\right)+\widetilde{\rho^{n}}-\widetilde{\sigma^{n}}\right)\\
&=&kb_{1}(\widetilde{\rho^{n}}-\widetilde{\sigma^{n}}),
\end{eqnarray*}
and from (\ref{fnls_B2}), (\ref{fnls_B4}), it holds that
\begin{eqnarray*}
||\omega_{2}^{n}||\lesssim k^{3},
\end{eqnarray*}
which, along with (\ref{fnls_B1}), implies
\begin{eqnarray}
||\zeta^{n,1}||\lesssim k^{3},\label{fnls_B5}
\end{eqnarray}
for all $n$, $\mu$ large enough and $k$ sufficiently small. Note now that
\begin{eqnarray}
Z^{n,1}-u^{N}(\tau_{n}^{1})&=&kb_{1}\left(F\left(\frac{Z^{n,1}+u^{N}}{2}\right)-F\left(\frac{u(\tau_{n}^{1})+u^{N}}{2}\right)\right)-\zeta^{n,1}\nonumber\\
&=&kb_{1}\left(-i(-\partial_{xx})^{s}\left(\frac{Z^{n,1}-u^{N}(\tau_{n}^{1})}{2}\right)\right.\label{fnls_B6}\\
&&\left.+iP_{N}\left(f\left(\frac{Z^{n,1}+u^{N}}{2}\right)-f\left(\frac{u(\tau_{n}^{1})+u^{N}}{2}\right)\right)\right)-\zeta^{n,1}\nonumber.
\end{eqnarray}
We consider the inner product of (\ref{fnls_B6}) with $Z^{n,1}-u^{N}(\tau_{n}^{1})$ and take the real part. Using that 
$${\rm Re}\left(-i(-\partial_{xx})^{s}\left(\frac{Z^{n,1}-u^{N}(\tau{n}^{1})}{2}\right),Z^{n,1}-u^{N}(\tau_{n}^{1})\right)=0,$$
(\ref{fnlsL2b}), (\ref{Inverse}), and (\ref{fnls3_2a}), then we have
\begin{eqnarray*}
||Z^{n,1}-u^{N}(\tau_{n}^{1})||^{2}\leq  3R^{2}kN|b_{1}|||Z^{n,1}-u^{N}(\tau_{n}^{1})||^{2}+||Z^{n,1}-u^{N}(\tau_{n}^{1})||||\zeta^{n,1}||.
\end{eqnarray*}
From (\ref{fnls_B5}) and the hypothesis (\ref{fnlsX2}), then it holds that
\begin{eqnarray}
||Z^{n,1}-u^{N}(\tau_{n}^{1})||\lesssim k^{3}.\label{fnls_B7}
\end{eqnarray}
Therefore, (\ref{fnls_B7}) and Proposition \ref{propos22} with $\mu$ large enough
prove (\ref{fnls_B0}) when $j=1$.

For the case $j=2$ one proceeds in a similar way: let $\zeta^{n,2}\in S_{N}$ be defined by
\begin{eqnarray*}
u^{N}(\tau_{n}^{2})=u^{N}(\tau_{n}^{1})+kb_{2}F\left(\frac{u(\tau_{n}^{1})+u^{N}(\tau_{n}^{2})}{2}\right)+\zeta^{n,2}.
\end{eqnarray*}
The same arguments as before can be used to prove that
\begin{eqnarray}
\max_{n}||\zeta^{n,2}||\lesssim k^{3}.\label{fnls_B8}
\end{eqnarray}
Now we write
\begin{eqnarray}
Z^{n,2}-u^{N}(\tau_{n}^{2})&=&Z^{n,1}-u^{N}(\tau_{n}^{1})\label{fnls_B9}\\
&&+kb_{2}\left(F\left(\frac{Z^{n,2}+Z^{n,1}}{2}\right)-F\left(\frac{u(\tau_{n}^{2})+u(\tau_{n}^{1})}{2}\right)\right)-\zeta^{n,2}.\nonumber
\end{eqnarray}
We define, \cite{DD2021}, $\chi_{j}\in S_{N}, 1\leq j\leq 4$, as
\begin{eqnarray*}
&&\chi_{1}=Z^{n,1}-u^{N}(\tau_{n}^{1}),\; \chi_{2}=Z^{n,2}-u^{N}(\tau_{n}^{2}),\\
&&\chi_{3}=\frac{Z^{n,1}+Z^{n,2}}{2},\; \chi_{4}=\frac{u(\tau_{n}^{2})+u(\tau_{n}^{1})}{2}.
\end{eqnarray*}
Note then that (\ref{fnls_B9}) can be written as
\begin{eqnarray}
\chi_{2}-\chi_{1}=kb_{2}(F(\chi_{3})-F(\chi_{4}))-\zeta^{n,2}.\label{fnls_B10}
\end{eqnarray}
We now take the real part of the inner product of (\ref{fnls_B10}) with $\frac{\chi_{1}+\chi_{2}}{2}=\chi_{3}-\chi_{4}$. Since
$${\rm Re}(i(-\partial_{xx})^{s}(\chi_{3}-\chi_{4}),\chi_{3}-\chi_{4})=0,$$ then, using again (\ref{fnlsL2b}), (\ref{Inverse}), and (\ref{fnls3_2a}) we have
\begin{eqnarray*}
\frac{1}{2}\left(||\chi_{2}||^{2}-||\chi_{1}||^{2}\right)&\leq & k|b_{2}|\left(3R^{2}N||\chi_{3}-\chi_{4}||^{2}+\frac{||\zeta^{n,2}||}{2}\left(||\chi_{1}||+||\chi_{2}||\right)\right)\\
&\leq & \frac{k|b_{2}|}{2}\left(\frac{3R^{2}N}{2}(||\chi_{1}||+||\chi_{2}||)^{2}+{||\zeta^{n,2}||}\left(||\chi_{1}||+||\chi_{2}||\right)\right),
\end{eqnarray*}
that is
\begin{eqnarray*}
||\chi_{2}||-||\chi_{1}||\leq {k|b_{2}|}\left(\frac{3R^{2}N}{2}\left(||\chi_{1}||+||\chi_{2}||\right)+||\zeta^{n,2}||\right).
\end{eqnarray*}
Now 
(\ref{fnls_B7}), (\ref{fnls_B8}), and (\ref{fnlsX2}) imply, for $\mu$ large enough 
\begin{eqnarray*}
||\chi_{2}||\leq ||\chi_{1}||+Ck^{3}\lesssim k^{3},
\end{eqnarray*}
and the same arguments as in the case $j=1$ are used to obtain
\begin{eqnarray}
||Z^{n,2}||_{\infty}\leq C,\label{fnls_B11}
\end{eqnarray}
for all $n$ and some constant $C$, under the hypothesis (\ref{fnlsX2}). 

Finally, it is not hard to see that the steps to derive (\ref{fnls_B11}) can be applied, in an analogous way, to obtain the bounds 
$$||Z^{n,j}-u^{N}(\tau_{n}^{j})||\lesssim k^{3},$$ and (\ref{fnls_B0}) for $j$ from those corresponding to the previous step $j-1$, implying then (\ref{fnls_B0}) for all $1\leq j\leq q$.
\end{proof}
Lemma \ref{lemma33a} can be extended to the norm in $H^{s}, 1/2<s\leq 1$, in the following sense.
\begin{lemma}
\label{lemma33b}
Let $Z^{n,j}$ be defined by (\ref{fnls3_6}), $1/2<s\leq 1$. Assume that $\mu$ is large enough and $k$ is sufficiently small satisfying (\ref{fnlsX2}). Then
\begin{eqnarray}
\max_{j,n}||Z^{n,j}||_{s}\leq C^{\prime}_{Z},\label{fnlsX3}
\end{eqnarray}
for some constant $C^{\prime}_{Z}$.
\end{lemma}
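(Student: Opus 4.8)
The plan is to prove (\ref{fnlsX3}) by the very argument used at the end of the proof of Lemma \ref{lemma33a} to deduce the $L^{\infty}$ bound (\ref{fnls_B0}), simply replacing the inverse inequality $|\varphi|_{\infty}\lesssim N^{1/2}||\varphi||$ by its $H^{s}$ analogue. For fixed $1\leq j\leq q$ and $n$, I would split
\begin{eqnarray*}
||Z^{n,j}||_{s}\leq ||u^{N}(\tau_{n}^{j})||_{s}+||Z^{n,j}-u^{N}(\tau_{n}^{j})||_{s},
\end{eqnarray*}
and bound the two contributions separately, using that the key $L^{2}$ estimate $||Z^{n,j}-u^{N}(\tau_{n}^{j})||\lesssim k^{3}$ (see (\ref{fnls_B7}) and its extension to all $1\leq j\leq q$) has already been established in the proof of Lemma \ref{lemma33a}.

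For the first term, since $1/2<s\leq 1$ one has $||u^{N}(\tau_{n}^{j})||_{s}\leq ||u^{N}(\tau_{n}^{j})||_{1}$, and Proposition \ref{propos22} with $j=0$, $l=1$ (which requires only $\mu\geq 2$) bounds this by a constant independent of $N$ and $n$; here the semidiscrete solution and the estimate (\ref{fnls2_13}) are understood on the extended interval $[-k,T+k]$ introduced before (\ref{fnls3_1}), so as to cover those $\tau_{n}^{j}$ that may fall outside $[0,T]$. For the second term, $Z^{n,j}-u^{N}(\tau_{n}^{j})\in S_{N}$, so the inverse inequality (\ref{Inverse}) gives $||Z^{n,j}-u^{N}(\tau_{n}^{j})||_{s}\leq ||Z^{n,j}-u^{N}(\tau_{n}^{j})||_{1}\lesssim N||Z^{n,j}-u^{N}(\tau_{n}^{j})||\lesssim Nk^{3}$. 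Invoking condition (\ref{fnlsX2}), which forces $kN\lesssim 1$, one obtains $Nk^{3}=(kN)k^{2}\lesssim k^{2}$, a bound uniform in $N$ (and vanishing as $k\to 0$). Adding the two contributions yields (\ref{fnlsX3}).

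There is no essential difficulty here beyond bookkeeping: all the analytic work is already contained in the $O(k^{3})$ $L^{2}$ estimates of Lemma \ref{lemma33a}. The single point to watch is that the negative power of $k$ gained from $||Z^{n,j}-u^{N}(\tau_{n}^{j})||\lesssim k^{3}$ must dominate the factor $N$ lost through the inverse inequality; this is guaranteed precisely by the coupling $kN\lesssim 1$ imposed in (\ref{fnlsX2}), without which $Nk^{3}$ need not stay bounded. If one wished to dispense with this restriction, one would instead have to repeat the local-error analysis of Lemma \ref{lemma33a} directly in the $H^{s}$ norm, forming the inner product of the error equation with $(I+(-\partial_{xx})^{s})(Z^{n,j}-u^{N}(\tau_{n}^{j}))$ and using the $H^{s}$ Lipschitz estimate (\ref{fnlsL1_1}) of Proposition \ref{propos12} in place of its $L^{\infty}$ counterpart (\ref{fnlsL2b}); this is more delicate, since it requires propagating the $H^{s}$ bound through the stage recursion (\ref{fnls3_1}).
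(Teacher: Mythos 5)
Your proof is correct, but it takes a genuinely different route from the paper. The paper re-runs the whole local consistency analysis of Lemma \ref{lemma33a} directly in the $H^{s}$ norm: it checks that the estimates (\ref{fnls_B1})--(\ref{fnls_B5}) remain valid in $\|\cdot\|_{s}$ (using Proposition \ref{propos12} in place of Proposition \ref{propos11}), takes the real part of the inner product of (\ref{fnls_B6}) with $(1+(-\partial_{xx})^{s})(Z^{n,1}-u^{N}(\tau_{n}^{1}))$, and obtains the sharper bound $\|Z^{n,j}-u^{N}(\tau_{n}^{j})\|_{s}\lesssim k^{3}$ before concluding by the triangle inequality and Proposition \ref{propos22} — i.e.\ precisely the ``more delicate'' alternative you sketch in your last sentences. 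Your shortcut instead recycles the already-proved $L^{2}$ estimate $\|Z^{n,j}-u^{N}(\tau_{n}^{j})\|\lesssim k^{3}$, pays a factor $N$ through the inverse inequality on $S_{N}$, and absorbs it with the coupling $kN\lesssim 1$ contained in (\ref{fnlsX2}); this is less work and perfectly sufficient for the boundedness claim (\ref{fnlsX3}), since an $O(k^{2})$ perturbation of $\|u^{N}(\tau_{n}^{j})\|_{s}$ is all that is needed. What you give up is the $O(k^{3})$ accuracy of the stage errors in $H^{s}$, which is the natural ingredient if one later wants the $H^{s}$ local-error bound (\ref{fnls3_7b}) behind Theorem \ref{th33a}; note also that the paper's $H^{s}$ argument does not actually dispense with (\ref{fnlsX2}) either — it still invokes it to absorb the term $3R^{2}kN^{s}|b_{1}|\,\|Z^{n,1}-u^{N}(\tau_{n}^{1})\|_{s}^{2}$, using $N^{s}\leq N$. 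Both routes are sound; yours is the more economical for the statement as written, the paper's yields the stronger intermediate estimate.
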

\begin{proof}
Using Propositions \ref{propos12} and \ref{propos22}, we check that (\ref{fnls_B1})-(\ref{fnls_B5}) are valid in the norm of $H^{s}$. Now we take the real part of the inner product of (\ref{fnls_B6}) with $(1+(-\partial_{xx})^{s})Z^{n,1}-u^{N}(\tau_{n}^{1})$, use that
$${\rm Re}\left(-i(-\partial_{xx})^{s}\left(\frac{Z^{n,1}-u^{N}(\tau{n}^{1})}{2}\right),(1+(-\partial_{xx})^{s})(Z^{n,1}-u^{N}(\tau_{n}^{1}))\right)=0,$$
then, integrating by parts, from Proposition \ref{propos12}, (\ref{Inverse}), and (\ref{fnls3_2a}), we have
\begin{eqnarray*}
||Z^{n,1}-u^{N}(\tau_{n}^{1})||_{s}^{2}\leq  3R^{2}kN^{s}|b_{1}|||Z^{n,1}-u^{N}(\tau_{n}^{1})||_{s}^{2}+||Z^{n,1}-u^{N}(\tau_{n}^{1})||_{s}||\zeta^{n,1}||_{s}.
\end{eqnarray*}
From (\ref{fnlsX2}) and since $s\leq 1$, it holds that
\begin{eqnarray*}
||Z^{n,1}-u^{N}(\tau_{n}^{1})||_{s}\lesssim ||\zeta^{n,1}||_{s}\lesssim k^{3}.
\end{eqnarray*}
From Proposition \ref{propos22} and (\ref{fnlsX2})
\begin{eqnarray}
||Z^{n,1}||_{s}\leq ||Z^{n,1}-u^{N}(\tau_{n}^{1})||_{s}+||u^{N}(\tau_{n}^{1})||_{s}\leq C,\label{fnlsX4}
\end{eqnarray}
for $\mu$ large enough. Using (\ref{fnlsX4}), the same arguments can be applied to obtain the corresponding bound for $||Z^{n,2}||_{s}$ and, recursively, (\ref{fnlsX3}) follows.
\end{proof}
\begin{lemma}
\label{lemma33}
 Assume that $u$ belongs to $H^{\mu}$ for $\mu$ large enough. Let 
$$R=\max_{0\leq t\leq T}|u^{N}(t)|_{\infty}.$$ 
Assume that $k$ is small enough so that (\ref{fnlsX2}) holds and
\begin{eqnarray}
\max_{0\leq n\leq M-1}||\Theta^{n}||\lesssim k^{\alpha+1},\label{fnls3_7}
\end{eqnarray}
for some integer $\alpha>0$, where $\Theta^{n}$ is given by (\ref{fnls3_6a}). Let $U^{n}$ be the solution of (\ref{fnls3_1}) (cf. Lemma \ref{lemma32}) and $\epsilon^{n}=Z^{n}-U^{n}$. Then
\begin{eqnarray}
\max_{0\leq n\leq M-1}||\epsilon^{n}||\lesssim k^{\alpha},\label{fnls3_8}
\end{eqnarray}
and, for $k$ small enough
\begin{eqnarray}
|Y^{n,j}|_{\infty}\leq 2\max\{R,C_{Z}\},\, \forall n,\; 0\leq j\leq q,\label{fnls3_25b}
\end{eqnarray}
where $C_{Z}$ is given by (\ref{fnls_B0}).
\end{lemma}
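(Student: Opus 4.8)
The plan is to establish the global error estimate (\ref{fnls3_8}) by the standard Lady Windermere's-fan decomposition, comparing the fully discrete solution $U^n$ against the ``exact'' RK trajectory $Z^{n,j}$ built from the semidiscrete solution $u^N$, and then derive the $L^\infty$ bound (\ref{fnls3_25b}) on the internal stages $Y^{n,j}$ as a consequence of the global error being $O(k^\alpha)$. First I would introduce the stage errors $\epsilon^{n,j}=Z^{n,j}-Y^{n,j}$, with $\epsilon^{n,0}=\epsilon^n$ and $\epsilon^{n+1}=\epsilon^{n,q}+\Theta^n$, where the local error $\Theta^n$ is controlled by (\ref{fnls3_7}). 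Subtracting the stage equation (\ref{fnls3_1}) from (\ref{fnls3_6}), I get, for each $1\le j\le q$,
\begin{eqnarray*}
\epsilon^{n,j}-\epsilon^{n,j-1}=kb_j\left(F\left(\frac{Z^{n,j}+Z^{n,j-1}}{2}\right)-F\left(\frac{Y^{n,j}+Y^{n,j-1}}{2}\right)\right).
\end{eqnarray*}

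Next I would take the real part of the $L^2$ inner product of this identity with $\epsilon^{n,j}+\epsilon^{n,j-1}$. The skew-adjoint dispersive part $-i(-\partial_{xx})^s$ contributes nothing to the real part (as in Lemmas \ref{lemma31} and \ref{lemma32}), so only the nonlinear term survives. Using the Lipschitz estimate (\ref{fnlsL2b}) from Proposition \ref{propos11} together with the inverse inequality (\ref{Inverse}) and the uniform $L^\infty$ bounds on the arguments --- which hold because $Z^{n,j}$ is controlled by Lemma \ref{lemma33a} and the $Y^{n,j}$ stay bounded under the smallness condition (\ref{fnlsX2}) --- I obtain a recursion of the form $\|\epsilon^{n,j}\|\le (1+Ck)\|\epsilon^{n,j-1}\|$ once the factor $3R^2kN|b_j|<1$ from (\ref{fnlsX2}) is absorbed. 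Chaining over the $q$ stages gives $\|\epsilon^{n,q}\|\le (1+Ck)\|\epsilon^n\|$ for a constant $C$ independent of $N$ and $k$, and hence $\|\epsilon^{n+1}\|\le (1+Ck)\|\epsilon^n\|+\|\Theta^n\|$. With $\epsilon^0=0$ and (\ref{fnls3_7}), the discrete Gronwall lemma summing $M=T/k$ steps yields $\|\epsilon^n\|\lesssim k^{-1}\cdot k^{\alpha+1}=k^\alpha$, which is exactly (\ref{fnls3_8}).

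For the $L^\infty$ bound (\ref{fnls3_25b}) on the stages, I would pass from the $L^2$ control on $\epsilon^{n,j}$ to an $L^\infty$ control via the inverse inequality $|\varphi|_\infty\lesssim N^{1/2}\|\varphi\|$ in (\ref{Inverse}), writing $|Y^{n,j}|_\infty\le |Z^{n,j}|_\infty+|\epsilon^{n,j}|_\infty\le C_Z+CN^{1/2}\|\epsilon^{n,j}\|$, where $|Z^{n,j}|_\infty\le C_Z$ by Lemma \ref{lemma33a}. The delicate point is that the naive bound $N^{1/2}k^\alpha$ need not be small; here one must exploit the smallness coupling between $k$ and $N$ built into (\ref{fnlsX2}), which forces $kN\lesssim 1$, so that $N^{1/2}\|\epsilon^{n,j}\|\lesssim N^{1/2}k^\alpha\lesssim k^{\alpha-1/2}\to 0$ as $k\to 0$ (with $\alpha\ge 1$). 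Thus for $k$ small enough the perturbation is below $\max\{R,C_Z\}$, giving (\ref{fnls3_25b}).

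\textbf{The main obstacle} is the circularity inherent in the stage-error estimate: the Lipschitz constant in the nonlinear term depends on the $L^\infty$ norms of $Y^{n,j}$, yet those bounds are precisely what (\ref{fnls3_25b}) is meant to establish. I expect to resolve this by a bootstrap/continuation argument in $n$ and $j$: assuming inductively that $|Y^{m,i}|_\infty\le 2\max\{R,C_Z\}$ for all earlier steps, one runs the $L^2$ estimate above to conclude $\|\epsilon^{n,j}\|\lesssim k^\alpha$, then converts this via the $N^{1/2}$ inverse inequality and the $kN\lesssim 1$ constraint to re-establish $|Y^{n,j}|_\infty\le 2\max\{R,C_Z\}$ strictly, closing the induction for $k$ sufficiently small. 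Handling the signed, possibly out-of-order nodes $\tau_n^j$ (since some $b_j<0$) requires the extension of the semidiscrete solution to $[-k,T+k]$ noted after (\ref{44}), but this is bookkeeping rather than a genuine difficulty.
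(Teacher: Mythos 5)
Your proposal follows essentially the same route as the paper: the stage-error recursion obtained by subtracting (\ref{fnls3_1}) from (\ref{fnls3_6}), the real part of the inner product with the average of the stage errors to kill the skew-adjoint dispersive part, the Lipschitz bound of Proposition \ref{propos11} under the $L^\infty$ bounds supplied by Lemma \ref{lemma33a} and a bootstrap hypothesis on $|Y^{n,j}|_\infty$, chaining plus discrete Gronwall for (\ref{fnls3_8}), and the inverse inequality combined with $kN\lesssim 1$ to close the bootstrap and obtain (\ref{fnls3_25b}). The only cosmetic deviation is that you invoke the inverse inequality (\ref{Inverse}) already in the Lipschitz step, where the paper relies solely on the assumed $L^\infty$ bounds of the arguments; this is harmless.
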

\begin{proof}
Let $\epsilon^{n,1}=Z^{n,1}-Y^{n,1}$. Then
\begin{eqnarray}
\epsilon^{n,1}-\epsilon^{n}=kb_{1}\left(F\left(\frac{Z^{n,1}+Z^{n}}{2}\right)-F\left(\frac{Z^{n,1}+Z^{n}}{2}-\left(\frac{\epsilon^{n,1}+\epsilon^{n}}{2}\right)\right)\right).\label{fnls3_8a}
\end{eqnarray}
In order to bound $||\epsilon^{n,1}||$ in terms of $||\epsilon^{n}||$, we make use of the second part of Lemma \ref{lemma32} and a bootstrap argument, by assuming first that
\begin{eqnarray}
|Y^{n,j}|_{\infty}\leq R_{1},\; 0\leq j\leq q,\label{fnls3_26a}
\end{eqnarray}
for some $R_{1}>2\max\{R,C_{Z}\}$.
We take the real part of the inner product of (\ref{fnls3_8a}) with $\frac{\epsilon^{n,1}+\epsilon^{n}}{2}$, use Proposition \ref{propos11} and (\ref{fnls3_26a}) to have, for some constant $C$, independent of $N$,
\begin{eqnarray*}
\frac{1}{2}\left(||\epsilon^{n,1}||^{2}-||\epsilon^{n}||^{2}\right)&=&-kb_{1}\left(f\left(\frac{Z^{n,1}+Z^{n}}{2}\right)\right.\\
&&\left.-f\left(\frac{Z^{n,1}+Z^{n}}{2}-\left(\frac{\epsilon^{n,1}+\epsilon^{n}}{2}\right)\right), \frac{\epsilon^{n,1}+\epsilon^{n}}{2}\right)\\
&\leq &k|b_{1}|C\left|\left|\frac{\epsilon^{n,1}+\epsilon^{n}}{2}\right|\right|^{2}.
\end{eqnarray*}
Therefore
\begin{eqnarray*}
||\epsilon^{n,1}||\leq \frac{1+\frac{k|b_{1}|}{2}C}{1-\frac{k|b_{1}|}{2}C}||\epsilon^{n}||,
\end{eqnarray*}
and, from the hypothesis on $k$, there is a constant $C_{1}>0$ such that
\begin{eqnarray}
||\epsilon^{n,1}||\leq (1+C_{1}k)||\epsilon^{n}||.\label{fnls3_8b}
\end{eqnarray}
The same argument can be used to prove that for $2\leq j\leq q$
\begin{eqnarray*}
||\epsilon^{n,j}||\leq \frac{1+\frac{k|b_{j}|}{2}C}{1-\frac{k|b_{j}|}{2}C}||\epsilon^{n,j-1}||,
\end{eqnarray*}
and consequently there is $C_{j}>0$ such that
\begin{eqnarray}
||\epsilon^{n,j}||\leq (1+C_{j}k)||\epsilon^{n,j-1}||, \;2\leq j\leq q.\label{fnls3_8c}
\end{eqnarray}
Therefore, (\ref{fnls3_8b}), (\ref{fnls3_8c}) imply that, if $k$ is sufficiently small, there there is a constant $C^{*}>0$ such that
\begin{eqnarray}
||\epsilon^{n,i}||\leq (1+C^{*}k)||\epsilon^{n}||.\label{fnls3_8d}
\end{eqnarray}
Note finally that $$\epsilon^{n+1}=Z^{n+1}-U^{n+1}=Z^{n+1}-Y^{n,s}=Z^{n,s}-Y^{n,s}+\Theta^{n}=\epsilon^{n,s}+\Theta^{n}.$$ 
Therefore, by (\ref{fnls3_8d}), 
\begin{eqnarray*}
||\epsilon^{n+1}||\leq (1+C^{*}k)||\epsilon^{n}||+||\Theta^{n}||.
\end{eqnarray*}
Since $\epsilon^{0}=0$, using (\ref{fnls3_7}) and the discrete Gronwall's inequality, (\ref{fnls3_8}) follows. Observe now that
\begin{eqnarray*}
|U^{n}|_{\infty}&\leq&|\epsilon^{n}|_{\infty}+|Z^{n}|_{\infty}\lesssim N^{1/2}||\epsilon^{n}||+|Z^{n}|_{\infty}\\
&\lesssim&N^{1/2}k^{\alpha}+R,\\
|Y^{n,j}|_{\infty}&\leq&|\epsilon^{n,j}|_{\infty}+|Z^{n,j}|_{\infty}\lesssim N^{1/2}||\epsilon^{n,j}||+|Z^{n,j}|_{\infty}\\
&\lesssim&N^{1/2}(1+C^{*}k)||\epsilon^{n}||+C_{Z}\lesssim N^{1/2}k^{\alpha}+C_{Z},
\end{eqnarray*}
for $1\leq j\leq q$, which implies (\ref{fnls3_25b}) for $k$ small enough, and the bootstrap argument is complete.
\end{proof}
\begin{remark}
Note that the hypothesis (\ref{fnlsX2}) in Lemma \ref{lemma33} can be replaced by a condition $kN^{1/2}=O(1)$ and $k$ small enough. In such case, $U^{n}$ is a solution of (\ref{fnls3_1}), since the uniqueness results given by Lemma \ref{lemma32} does not apply. 
\end{remark}
As in Lemma \ref{lemma33a}, one can prove a version of Lemma \ref{lemma33} in $H^{s}, 1/2<s\leq 1$.
\begin{lemma}
\label{lemma33c}
 Assume that $u$ belongs to $H^{\mu}$ for $\mu$ large enough. Let 
$$R=\max_{0\leq t\leq T}||u^{N}(t)||_{s}.$$ 
Assume that $k$ is small enough so that (\ref{fnlsX2}) holds and
\begin{eqnarray}
\max_{0\leq n\leq M-1}||\Theta^{n}||_{s}\lesssim k^{\alpha+1},\label{fnls3_7b}
\end{eqnarray}
for some integer $\alpha>1$. Let $U^{n}$ be the solution of (\ref{fnls3_1}) and $\epsilon^{n}=Z^{n}-U^{n}$. Then
\begin{eqnarray}
\max_{0\leq n\leq M-1}||\epsilon^{n}||_{s}\lesssim k^{\alpha},\label{fnls3_8bb}
\end{eqnarray}
and, for $k$ small enough
\begin{eqnarray}
||Y^{n,j}||_{s}\leq 2\max\{R,C^{\prime}_{Z}\},\, \forall n,\; 0\leq j\leq q,\label{fnls3_25c}
\end{eqnarray}
where $C_{Z}$ is given by (\ref{fnlsX3}).
\end{lemma}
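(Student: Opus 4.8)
The plan is to reproduce the proof of Lemma \ref{lemma33} step by step, but carrying out every estimate in the $H^{s}$ norm and replacing the $L^{\infty}$-Lipschitz bound of Proposition \ref{propos11} by the $H^{s}$ estimate of Proposition \ref{propos12}. As before, I set $\epsilon^{n,j}=Z^{n,j}-Y^{n,j}$, subtract the $j$-th stage equations for $Z^{n,j}$ and $Y^{n,j}$, and obtain for $j=1$ the identity (\ref{fnls3_8a}). Because Proposition \ref{propos12} requires uniform $H^{s}$ control of the midpoint arguments, I run a bootstrap argument \emph{in the $H^{s}$ norm}: I assume
\begin{eqnarray*}
||Y^{n,j}||_{s}\leq R_{1},\; 0\leq j\leq q,
\end{eqnarray*}
for some $R_{1}>2\max\{R,C^{\prime}_{Z}\}$. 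Together with the bound (\ref{fnlsX3}) of Lemma \ref{lemma33b} for $||Z^{n,j}||_{s}$, this makes both arguments
$$A=\frac{Z^{n,1}+Z^{n}}{2},\qquad A-\frac{\epsilon^{n,1}+\epsilon^{n}}{2}=\frac{Y^{n,1}+U^{n}}{2}$$
bounded in $H^{s}$ uniformly in $n$ and $N$.

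The key step is the energy estimate. I take the real part of the $L^{2}$ inner product of (\ref{fnls3_8a}) with $(1+(-\partial_{xx})^{s})\frac{\epsilon^{n,1}+\epsilon^{n}}{2}$, exactly the test function used in Lemma \ref{lemma33b}. The dispersive contribution drops out because, by self-adjointness of $(-\partial_{xx})^{s}$,
$${\rm Re}\left(-i(-\partial_{xx})^{s}\frac{\epsilon^{n,1}+\epsilon^{n}}{2},(1+(-\partial_{xx})^{s})\frac{\epsilon^{n,1}+\epsilon^{n}}{2}\right)=0,$$
while the left-hand side equals $\frac{1}{2}(||\epsilon^{n,1}||_{s}^{2}-||\epsilon^{n}||_{s}^{2})$ up to the equivalence of $||\cdot||_{s}$ with the norm induced by $(1+(-\partial_{xx})^{s})$. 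For the nonlinear part I drop $P_{N}$ by property (P2), apply the Cauchy--Schwarz inequality in $H^{s}$ and Proposition \ref{propos12} to get
$$\left|\left(f(A)-f\left(A-\tfrac{\epsilon^{n,1}+\epsilon^{n}}{2}\right),(1+(-\partial_{xx})^{s})\tfrac{\epsilon^{n,1}+\epsilon^{n}}{2}\right)\right|\lesssim \left(||A||_{s}^{2}+\left\|A-\tfrac{\epsilon^{n,1}+\epsilon^{n}}{2}\right\|_{s}^{2}\right)\left\|\tfrac{\epsilon^{n,1}+\epsilon^{n}}{2}\right\|_{s}^{2},$$
and the two squared norms in the first factor are bounded by a constant independent of $N$ thanks to the bootstrap hypothesis and Lemma \ref{lemma33b}. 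This yields $\frac{1}{2}(||\epsilon^{n,1}||_{s}^{2}-||\epsilon^{n}||_{s}^{2})\lesssim k(||\epsilon^{n,1}||_{s}^{2}+||\epsilon^{n}||_{s}^{2})$, hence $||\epsilon^{n,1}||_{s}\leq (1+C_{1}k)||\epsilon^{n}||_{s}$ for $k$ small.

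The same computation applied to the remaining stages gives $||\epsilon^{n,j}||_{s}\leq (1+C_{j}k)||\epsilon^{n,j-1}||_{s}$ for $2\leq j\leq q$, and composing the stages produces $||\epsilon^{n,q}||_{s}\leq (1+C^{*}k)||\epsilon^{n}||_{s}$. Since $\epsilon^{n+1}=\epsilon^{n,q}+\Theta^{n}$, the hypothesis (\ref{fnls3_7b}) together with the discrete Gronwall inequality and $\epsilon^{0}=0$ give (\ref{fnls3_8bb}). The bootstrap then closes directly in $H^{s}$: from $||Y^{n,j}||_{s}\leq ||\epsilon^{n,j}||_{s}+||Z^{n,j}||_{s}\lesssim k^{\alpha}+C^{\prime}_{Z}$ and $\alpha>1$, the right-hand side falls below $2\max\{R,C^{\prime}_{Z}\}<R_{1}$ for $k$ small enough, which both verifies the bootstrap assumption and establishes (\ref{fnls3_25c}). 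In contrast with Lemma \ref{lemma33}, no inverse inequality and hence no factor $N^{1/2}$ enters the closure, since the whole argument stays in $H^{s}$.

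The step requiring the most care is the nonlinear estimate. Whereas in the $L^{2}$ case Proposition \ref{propos11} supplies a direct $L^{\infty}$-Lipschitz bound, here Proposition \ref{propos12} demands uniform $H^{s}$ control of both midpoint arguments, so the bootstrap must be run in the $H^{s}$ norm and coupled carefully with Lemma \ref{lemma33b}. The delicate bookkeeping is to check that the constants produced by Proposition \ref{propos12} are genuinely independent of $N$, so that the term $k^{\alpha}$ alone controls the bootstrap closure; granting this, the remaining algebra is a routine transcription of the $L^{2}$ argument.
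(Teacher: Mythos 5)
Your proposal is correct and follows essentially the same route as the paper's proof: the same test function $(1+(-\partial_{xx})^{s})\frac{\epsilon^{n,1}+\epsilon^{n}}{2}$ applied to (\ref{fnls3_8a}), the same replacement of Proposition \ref{propos11} by Proposition \ref{propos12}, the same $H^{s}$ bootstrap hypothesis closed via Lemma \ref{lemma33b}, and the same stage-by-stage composition followed by the discrete Gronwall inequality. Your additional remarks on the cancellation of the dispersive term and on the absence of the $N^{1/2}$ factor simply make explicit what the paper leaves implicit.
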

\begin{proof}
We just outline the main differences with respect to the proof of Lemma \ref{lemma33}. Now, the bootstrap argument makes use of 
the assumption
\begin{eqnarray}
||Y^{n,j}||_{s}\leq R_{1},\; 0\leq j\leq q,\label{fnls3_26b}
\end{eqnarray}
for some $R_{1}>2\max\{R,C^{\prime}_{Z}\}$. Then we can take the real part of the inner product of  (\ref{fnls3_8a}) with $(1+(-\partial_{xx})^{s})\left(\frac{\epsilon^{n,1}+\epsilon^{n}}{2}\right)$, use Proposition \ref{propos12} and (\ref{fnls3_26b}) to have, for some constant $C$
\begin{eqnarray*}
\frac{1}{2}\left(||\epsilon^{n,1}||_{s}^{2}-||\epsilon^{n}||_{s}^{2}\right)
&\leq &k|b_{1}|C\left|\left|\frac{\epsilon^{n,1}+\epsilon^{n}}{2}\right|\right|_{s}^{2},
\end{eqnarray*}
leading to, as in (\ref{fnls3_8b})
\begin{eqnarray}
||\epsilon^{n,1}||_{s}\leq (1+C_{1}k)||\epsilon^{n}||_{s}.\label{fnls3_8cc}
\end{eqnarray}
From (\ref{fnls3_8cc}), the same arguments as those used in the proof of Lemma \ref{lemma33} applies with the norm in $H^{s}$ to obtain (\ref{fnls3_8bb}) and (\ref{fnls3_25c}).
\end{proof}

\begin{theorem}
\label{th33}
Under the assumptions of Lemma \ref{lemma33}, the scheme (\ref{fnls3_1}) has for all $n$ a unique solution such that
\begin{eqnarray}
\max_{n}||U^{n}-u(t_{n})||\lesssim k^{\alpha}+N^{-\mu}.\label{fnls3_9}
\end{eqnarray}
\end{theorem}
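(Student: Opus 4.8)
The plan is to obtain the fully discrete error by a triangle inequality that splits it into the already-controlled spatial error of the semidiscretization and the temporal error of the Runge-Kutta composition. With $Z^{n}=u^{N}(t_{n})$ and $\epsilon^{n}=Z^{n}-U^{n}$ as in the setup preceding Lemma \ref{lemma33}, I would write
\[
U^{n}-u(t_{n})=-\epsilon^{n}+\bigl(u^{N}(t_{n})-u(t_{n})\bigr),
\]
so that
\[
\|U^{n}-u(t_{n})\|\leq \|\epsilon^{n}\|+\|u^{N}(t_{n})-u(t_{n})\|.
\]
The two terms on the right are exactly what the preceding results control: Lemma \ref{lemma33} gives $\max_{n}\|\epsilon^{n}\|\lesssim k^{\alpha}$ (the temporal part), while Theorem \ref{th22b} gives $\max_{t}\|u^{N}-u\|\lesssim N^{-\mu}$ (the spatial part). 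Taking the maximum over $n$ and combining yields (\ref{fnls3_9}).

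Before invoking these estimates, I would settle existence and uniqueness of $U^{n}$ for every $n$. Existence of the internal stages $Y^{n,j}$, and hence of $U^{n+1}=Y^{n,q}$, follows from the Brouwer fixed-point argument of Proposition \ref{propos32}, applied step by step starting from $U^{0}=P_{N}u_{0}$. Uniqueness then follows from the first part of Lemma \ref{lemma32}, which applies because the smallness condition (\ref{fnlsX2}) is among the hypotheses of Lemma \ref{lemma33}. Since (\ref{fnls3_2a}) guarantees $\|U^{n}\|=\|U^{0}\|$ for all $n$, the quantity $R$ entering (\ref{fnlsX2}) is the same at every step, so uniqueness persists throughout the time stepping and $U^{n}$ is well defined and unique for all $0\leq n\leq M$.

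The only point requiring care is that the constant implicit in (\ref{fnls3_9}) be independent of $N$. This is guaranteed by the earlier a priori bounds: the quantity $R=\max_{t}|u^{N}(t)|_{\infty}$ appearing in Lemma \ref{lemma33} is controlled uniformly in $N$ by combining the embedding (\ref{fnlsL6}) with the $H^{s}$-bound (\ref{fnlsL5}) of Lemma \ref{lemmaL5}, so that the hypotheses of Lemma \ref{lemma33} hold with $N$-independent constants; consequently $\max_{n}\|\epsilon^{n}\|\lesssim k^{\alpha}$ carries an $N$-independent constant, while the spatial estimate of Theorem \ref{th22b} is $N$-independent by construction. Thus the present theorem is essentially an assembly of Lemma \ref{lemma33} and Theorem \ref{th22b}; the genuine difficulty lies not here but in the prior verification that the local temporal error obeys (\ref{fnls3_7}) with order $\alpha+1$, which is established separately (in Appendix \ref{appA} for the first members of the family).
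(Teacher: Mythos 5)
Your proposal is correct and follows essentially the same route as the paper: the triangle inequality $\|U^{n}-u(t_{n})\|\leq\|\epsilon^{n}\|+\|u^{N}(t_{n})-u(t_{n})\|$, with the first term bounded by Lemma \ref{lemma33} (estimate (\ref{fnls3_8})) and the second by Theorem \ref{th22b} (estimate (\ref{fnls2_10bb})), and uniqueness drawn from Lemma \ref{lemma32} under the hypothesis (\ref{fnlsX2}). The additional remarks on existence via Proposition \ref{propos32} and on the $N$-independence of the constants are consistent with the paper's framework and do not change the argument.
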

\begin{proof}
Note that, due to Lemma \ref{lemma32} and the hypothesis on $k$, uniqueness of $U^{n}$ is ensured. On the other hand
\begin{eqnarray*}
||U^{n}-u(t_{n})||&\leq & ||U^{n}-u^{N}(t_{n})||+||u^{N}(t_{n})-u(t_{n})||\\
&=&||\epsilon^{n}||+||u^{N}(t_{n})-u(t_{n})||,
\end{eqnarray*}
and from (\ref{fnls3_8}) and (\ref{fnls2_10bb}), (\ref{fnls3_9}) holds.
\end{proof}
Using the error estimate (\ref{fnls2_10b}), instead of (\ref{fnls2_10bb}), the corresponding convergence result in $H^{s}, 1/2<s\leq 1$, would be as follows.
\begin{theorem}
\label{th33a}
Under the assumptions of Lemma \ref{lemma33c}, the scheme (\ref{fnls3_1}) has for all $n$ a unique solution such that
\begin{eqnarray}
\max_{n}||U^{n}-u(t_{n})||_{s}\lesssim k^{\alpha}+N^{s-\mu}.\label{fnls3_30b}
\end{eqnarray}
\end{theorem}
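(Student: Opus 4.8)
The plan is to combine the triangle-inequality splitting already used in Theorem \ref{th33} with the $H^{s}$ error estimate for the semidiscretization from Theorem \ref{th22}, replacing each $L^{2}$ ingredient by its $H^{s}$ counterpart. First I would invoke Lemma \ref{lemma33c} to obtain the estimate $\max_{0\leq n\leq M-1}||\epsilon^{n}||_{s}\lesssim k^{\alpha}$ for the difference $\epsilon^{n}=Z^{n}-U^{n}$ between the semidiscrete solution sampled at $t_{n}$ and the fully discrete iterate $U^{n}$. This requires that the local-error hypothesis (\ref{fnls3_7b}) hold in the $H^{s}$ norm with the appropriate power $k^{\alpha+1}$, which is precisely what is assumed in the statement via the reference to Lemma \ref{lemma33c}. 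Uniqueness of $U^{n}$ for all $n$ is again guaranteed by Lemma \ref{lemma32} together with the smallness condition (\ref{fnlsX2}) on $k$.

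Next I would write the elementary decomposition
\begin{eqnarray*}
||U^{n}-u(t_{n})||_{s}\leq ||U^{n}-u^{N}(t_{n})||_{s}+||u^{N}(t_{n})-u(t_{n})||_{s}=||\epsilon^{n}||_{s}+||u^{N}(t_{n})-u(t_{n})||_{s}.
\end{eqnarray*}
The first term is bounded by $k^{\alpha}$ through (\ref{fnls3_8bb}), and the second term is bounded uniformly in $t$ by the spatial estimate (\ref{fnls2_10b}) of Theorem \ref{th22}, which gives $\max_{0\leq t\leq T}||u^{N}-u||_{s}\lesssim N^{s-\mu}$ under the hypothesis $u\in H^{\mu}$ with $\mu$ large enough. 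Taking the maximum over $n$ and adding the two contributions yields (\ref{fnls3_30b}).

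There is no genuine obstacle at the level of this theorem itself, since both hard estimates have been isolated into Lemmas \ref{lemma33c} and Theorem \ref{th22}: the proof is a transcription of the argument for Theorem \ref{th33} with $||\cdot||$ replaced by $||\cdot||_{s}$ and $N^{-\mu}$ replaced by $N^{s-\mu}$. The only point requiring a moment of care is consistency of the regularity bookkeeping, namely that the same value of $\mu$ (taken large enough) simultaneously validates the local-error hypothesis of Lemma \ref{lemma33c}, the uniform-in-$n$ bound (\ref{fnls3_25c}), and the semidiscrete estimate of Theorem \ref{th22}; I would simply note that choosing $\mu$ large enough covers all three. The real work underlying the result — establishing the $H^{s}$ local-error hypothesis (\ref{fnls3_7b}) for the concrete composition schemes — is deferred to Appendix \ref{appA}, as signalled in the introduction, and is not reproved here.
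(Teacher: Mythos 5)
Your proposal is correct and follows exactly the route the paper intends: the paper gives no separate proof for Theorem \ref{th33a} but states that it is obtained from the proof of Theorem \ref{th33} by replacing the $L^{2}$ ingredients with their $H^{s}$ counterparts, i.e.\ the triangle-inequality splitting $||U^{n}-u(t_{n})||_{s}\leq ||\epsilon^{n}||_{s}+||u^{N}(t_{n})-u(t_{n})||_{s}$ combined with (\ref{fnls3_8bb}) from Lemma \ref{lemma33c}, the estimate (\ref{fnls2_10b}) from Theorem \ref{th22}, and uniqueness from Lemma \ref{lemma32}. Nothing further is needed.
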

\begin{remark}
Note that, from (\ref{fnls2_10bb}) and (\ref{fnls3_30b})
\begin{eqnarray*}
\max_{0\leq t\leq T}||u^{N}(t)||_{s}&\lesssim&N^{s-\mu}+\max_{0\leq t\leq T}||u(t)||_{s},\\
\max_{0\leq n\leq M}||U^{n}||_{s}&\lesssim&k^{\alpha}+N^{s-\mu}+\max_{0\leq t\leq T}||u(t)||_{s},
\end{eqnarray*}
for $1/2<s\leq 1$. In the case of the $L^{2}$ norm, the same estimates hold with $s=0$.
\end{remark}

\begin{remark}
\label{remark1}
Each internal stage in (\ref{fnls3_1}) can be solved by iteration separately: given $Y^{n,j-1}$ then $Y^{n,j}$ can be obtained from the resolution of the fixed point system for $X^{*}\in S_{N}$
\begin{eqnarray}
X^{*}=Y^{n,j-1}+\frac{kb_{j}}{2}F(X^{*}),\label{fnls_B12}
\end{eqnarray}
and then taking 
\begin{eqnarray}
Y^{n,j}=2X^{*}-Y^{n,j-1}.\label{fnls_B12a}
\end{eqnarray}
The implementation of section \ref{sec4} makes use of the following fixed point-type iterative method for (\ref{fnls_B12})
\begin{eqnarray}
X_{0}&=&Y^{n,j-1},\nonumber\\
\left(I+i\frac{kb_{j}}{2}(-\partial_{xx})^{s}\right)X_{\nu+1}&=&Y^{n,j-1}+i\frac{kb_{j}}{2}f(X_{\nu}),\label{fnls_B13}
\end{eqnarray}
for $\nu=0,1,\ldots$, and $X_{\nu}\in S_{N}$. Note from (\ref{fnls_B12a}) and Lemma \ref{lemma33} that
$|X^{*}|_{\infty}\leq R^{*}$, for some $R^{*}>0$. From (\ref{fnls_B12}) and (\ref{fnls_B13}) we have
\begin{eqnarray}
\left(I+i\frac{kb_{j}}{2}(-\partial_{xx})^{s}\right)\left(X^{*}-X_{\nu+1}\right)=i\frac{kb_{j}}{2}\left(f(X^{*})-f(X_{\nu})\right).\label{fnls_B14}
\end{eqnarray}
We first consider the case $\nu=0$. If we take the real part of the inner product of (\ref{fnls_B14}) with $X^{*}-X_{0}$ and using Proposition \ref{propos11}, there is a constant $C$ such that
\begin{eqnarray*}
||X^{*}-X_{1}||\leq  \frac{k|b_{1}|}{2}C||X^{*}-X_{0}||\leq \frac{k|b_{1}|}{\sqrt{2}}C\sqrt{L}|X^{*}-X_{0}|_{\infty}.
\end{eqnarray*}
Therefore
\begin{eqnarray*}
|X_{1}|_{\infty}&\leq & |X^{*}-X_{1}|_{\infty}+|X^{*}|_{\infty}\lesssim N^{1/2}||X^{*}-X_{1}||+|X^{*}|_{\infty}\\
&\lesssim & kN^{1/2}+R^{*},
\end{eqnarray*}
and from (\ref{fnlsX2}) we have $|X_{1}|_{\infty}\leq 1+R^{*}$.
Now, using induction on $\nu$, if we assume $|X_{\nu}|_{\infty}\leq 1+R^{*}$, taking  the real part of the inner product of (\ref{fnls_B14}) with $X^{*}-X_{\nu+1}$ and using Proposition \ref{propos11}, there is a constant $C$ such that
\begin{eqnarray}
||X^{*}-X_{\nu+1}||\leq  \frac{k|b_{j}|}{2}C||X^{*}-X_{\nu}||\leq \frac{k|b_{j}|}{\sqrt{2}}C\sqrt{L}|X^{*}-X_{\nu}|_{\infty}.\label{fnls_B16}
\end{eqnarray}
Therefore
\begin{eqnarray*}
|X_{\nu+1}|_{\infty}&\leq & |X^{*}-X_{\nu+1}|_{\infty}+|X^{*}|_{\infty}\lesssim N^{1/2}||X^{*}-X_{\nu+1}||+|X^{*}|_{\infty}\\
&\lesssim & kN^{1/2}+R^{*},
\end{eqnarray*}
from (\ref{fnls_B16}). This implies, under the hypothesis (\ref{fnlsX2}), that $|X_{\nu}|_{\infty}\leq 1+R^{*}$, for all $\nu$ and, from the first inequality in (\ref{fnls_B16}) and $k$ satisfying (\ref{fnlsX2}), it holds that
\begin{eqnarray*}
||X^{*}-X_{\nu}||\rightarrow 0,\; ||X^{*}-X_{\nu}||\rightarrow 0,
\end{eqnarray*}
as $\nu\rightarrow\infty$.
\end{remark}

\section{Numerical experiments}
\label{sec4}
We now show some numerical experiments to validate the performance of the fully discrete method introduced above. To this end, the spectral semidiscretization is formulated as a collocation method, where the numerical solution $u_{N}:[0,T]\rightarrow S_{N}$ satisfies (\ref{fnls2_0}) at a uniform grid $x_{j}=-L+jh, j=0,\ldots,N-1$, of collocation points, with $h=2L/N$, and is represented by its nodal values at the $x_{j}$ as a vector
$$U_{N}(t)=(u_{N}(x_{0},t),\ldots,u_{N}(x_{N-1},t))^{T},$$ satisfying
\begin{eqnarray}
i\frac{d}{dt}U_{N}-(-D_{N}^{2})^{s}U_{N}+f(U_{N})=0,\label{fnls4_1}
\end{eqnarray}
where $D_{N}$ represents the $N\times N$ Fourier pseudospectral differentiation matrix in $(-L,L)$. The system (\ref{fnls4_1}) is implemented in terms of the Fourier representation using FFT techniques and the corresponding computation of $D_{N}$.
For the cubic nonlinearity $f(u)=|u|^{2}u$, the corresponding term in (\ref{fnls4_1}) involves Hadamard products. As in \cite{Cano2006} for the nonfractional case ($s=1$), (\ref{fnls4_1}) admits a canonical Hamiltonian structure with Hamiltonian
\begin{eqnarray}
H(U_{N})=\frac{1}{2}\left(\langle W_{N},A_{N}W_{N}\rangle_{N}+\langle V_{N},A_{N}V_{N}\rangle_{N}-V(V_{N},W_{N})\right),\label{fnls3cc}
\end{eqnarray}
where $U_{N}=V_{N}+iW_{N}$, $A_{N}=(-D_{N}^{2})^{s}$, $V$ is given by (\ref{fnls11a}), and $\langle\cdot,\cdot\rangle_{N}$ denotes the Euclidean inner product in $\mathbb{R}^{N}$. In addition, since $A_{N}$ is symetric,
\begin{eqnarray}
I_{1}(U_{N})=\frac{1}{2}\left(\langle V_{N},V_{N}\rangle_{N}+\langle W_{N},W_{N}\rangle_{N}\right),\label{fnls3aa}
\end{eqnarray}
is an additional conserved quantity of (\ref{fnls4_1}), as well as, \cite{Cano2006}
\begin{eqnarray}
I_{2}(U_{N})=\frac{1}{2}\left(\langle V_{N},D_{N}W_{N}\rangle_{N}-\langle W_{N},D_{N}V_{N}\rangle_{N}\right).\label{fnls3bb}
\end{eqnarray}
All the experiments were made with the full discretization corresponding to the $4$th-order method (\ref{44}) with $q=3$ (cf. Appendix \ref{appA}). The aim of the first ones is to check the accuracy of the approximation. To this end, we consider the nonfractional, classical NLS equation (\ref{fnls11}) with $s=1$, and its solitary wave (soliton) solutions
\begin{eqnarray}
\psi(x,t,\lambda_{0}^{1},\lambda_{0}^{2},x_{0},\theta_{0})&=&G_{(t\lambda_{0}^{1},t\lambda_{0}^{2})}(\varphi)\nonumber\\
&=&\rho(x-t\lambda_{0}^{2}-x_{0})e^{i(\theta(x-t\lambda_{0}^{2}-x_{0})+\theta_{0}+\lambda_{0}^{1}t)},\label{fnls22_7}
\end{eqnarray}
 where, \cite{DuranS2000}
\begin{eqnarray}
\rho(x)&=&\sqrt{2a}{\rm sech}\sqrt{a}x,\quad a=\lambda_{0}^{1}-\frac{(\lambda_{0}^{2})^{2}}{4},\nonumber\\
\theta(x)&=&\frac{\lambda_{0}^{2}}{2}x.\label{fnls_45}
\end{eqnarray}
The numerical solution at $T=100$ is first compared with the exact solution (\ref{fnls22_7}), (\ref{fnls_45}) with $\lambda_{0}^{1}=1, \lambda_{0}^{2}=0.25, x_{0}=\theta_{0}=0$. The errors in the $L^{2}$ norm of the $v$ and $w$ components, for several time-step sizes are displayed in Table \ref{texpe1}, showing the fourth order of convergence of the time discretization. (Note that, since the soliton solutions are smooth, a spectral order of convergence of the semidiscrete approximation is expected, \cite{DD2021}.)
\begin{table}[htbp]
\begin{tabular}{c|c|c|c|c|}
$\Delta t$&$v$ Error&Rate&$w$ Error&Rate\\
\hline
$2.5\times 10^{-2}$&$1.1621\times 10^{-4}$&&$1.9446\times 10^{-4}$&\\
$1.25\times 10^{-2}$&$7.2820\times 10^{-6}$&$3.9963$&$1.2187\times 10^{-5}$&$3.9961$\\
$6.25\times 10^{-3}$&$4.5630\times 10^{-7}$&$3.9963$&$7.6359\times 10^{-7}$&$3.9963$\\
$3.125\times 10^{-3}$&$2.7478\times 10^{-8}$&$4.0537$&$4.6003\times 10^{-8}$&$4.0530$\\
\hline
\end{tabular}
\caption{$L^{2}$ errors and temporal convergence rates. Solitary-wave solution (\ref{fnls_45}) with $\sigma=1, \lambda_{0}^{1}=1, \lambda_{0}^{2}=0.25$,  $T=100$, $N=4096$.\label{texpe1}}
\end{table}
The time behaviour of the error is displayed, for several time step sizes and in log-log scale, in Figure \ref{FIG_C1}. The linear growth with time observed in the figure agrees with the results shown in \cite{DuranS2000} for the simulation of a solitary wave with the IMR, and can be explained with the same arguments: The leading term of the error can be divided into two components; one is associated to the errors in the parameters of the solitary wave (amplitude, speed, and phases) and the second one, not related to them, is bounded. The preservation of the mass and momentum quantities by the method implies that, up to $O(k^{4})$ terms, amplitude and speed are preserved, while the errors in the phases grow like $O(tk^{4})$. This is the linear growth observed in Figure \ref{FIG_C1}.
\begin{figure}[htbp]
\centering
{\includegraphics[width=0.8\columnwidth]{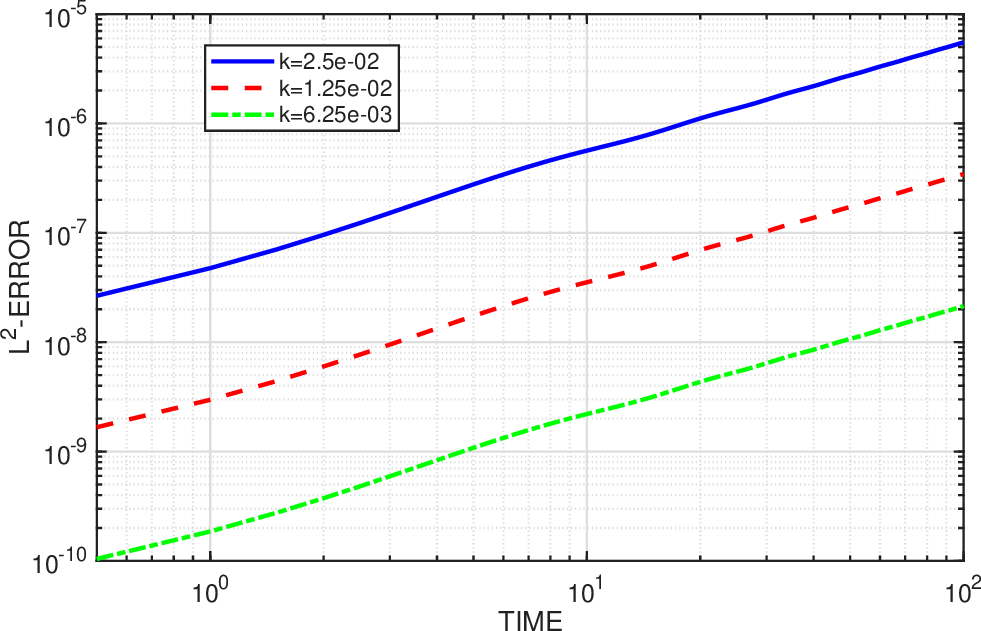}}
\caption{Time behaviour of the $L^{2}$ errrors w.r.t. the solitary-wave solution (\ref{fnls_45}) with $\lambda_{0}^{1}=1, \lambda_{0}^{2}=0.25$, $x_{0}=\theta_{0}=0$.}
\label{FIG_C1}
\end{figure}

We now measure the accuracy in the fractional case. By way of illustration, we study computationally the simulation of a solitary wave solution of (\ref{fnls11}) with $s\in (1/2,1)$, whose existence was recently proved in \cite{NRAD1}. The waves are of the form (\ref{fnls22_7}) but, unlike the nonfractional case, explicit formulas for $\rho$ and $\theta$ are not known and approximate solitary-wave profiles must be numerically generated with the iterative procedure described in \cite{NRAD1}, cf. \cite{Petv1976}. Taking one of these approximate profiles as initial condition for the full discretization, we illustrate the accuracy of the simulation with two types of experiments. In Figure \ref{FIG_C2}, the time evolution of the error of the quantities (\ref{fnls3cc})-(\ref{fnls3bb}) is displayed, in log-log scale, up to a final time $T=100$, and for $k=1.25\times 10^{-2}$. The figures show a virtual preservation of the invariants, up to the final time of simulation. The small error growth is due to the propagation of some errors in the accuracy of the first steps of the iterative procedure (\ref{fnls_B13}) of the internal stages of the fully discrete method, \cite{FSS1992}.

\begin{figure}[htbp]
\centering
{\includegraphics[width=0.8\columnwidth]{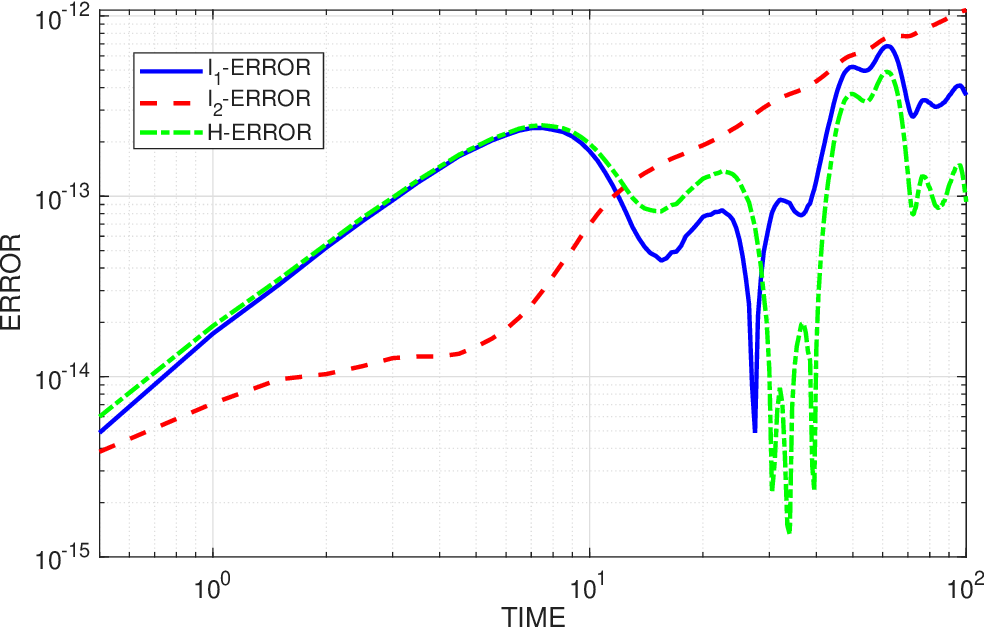}}
\caption{Time behaviour of the errors of the quantities (\ref{fnls3cc})-(\ref{fnls3cc}) w.r.t. a solitary-wave solution of (\ref{fnls11}) with $s=0.75, \lambda_{0}^{1}=1, \lambda_{0}^{2}=0.25$, with $k=1.25\times 10^{-2}$.}
\label{FIG_C2}
\end{figure}
A second group of experiments is concerned with the accuracy  of the computation of several parameters of the waves. Figure \ref{FIG_C3} shows the evolution of the errors in the amplitude and speed of the solitary waves for several step sizes. By comparison with the arguments above, concerning the growth with time of the error when approximating solitons in the nonfractional case, these results reveal that these parameters are almost conserved in the simulation, and give some confidence on the qualitative accuracy of the long-term simulation of the waves.

\begin{figure}[htbp]
\centering
\subfigure
{\includegraphics[width=6.2cm]{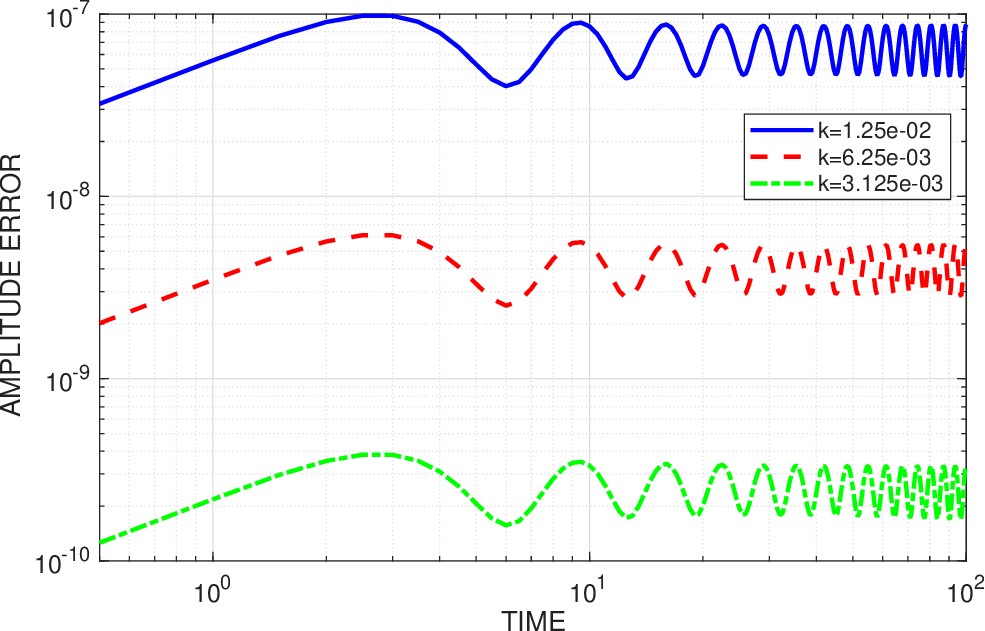}}
\subfigure
{\includegraphics[width=6.2cm]{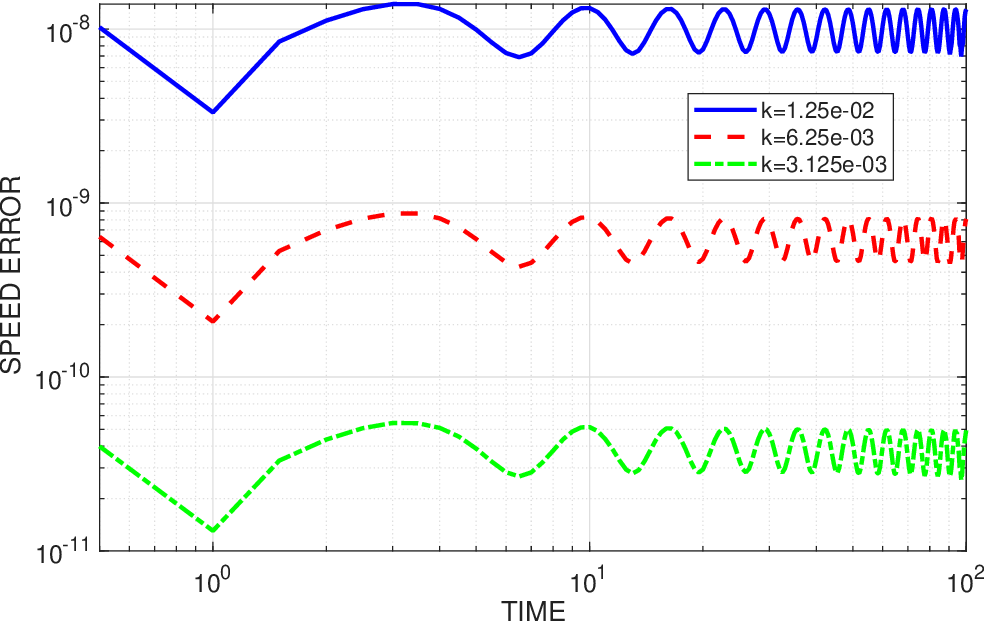}}
\caption{Time behaviour of the errors in (a) amplitude; (b) speed w.r.t. a solitary-wave solution of (\ref{fnls11}) with $s=0.75, \lambda_{0}^{1}=1, \lambda_{0}^{2}=0.25$.}
\label{FIG_C3}
\end{figure}



\section*{Acknowledgments}
This research has been supported by Ministerio de Ciencia e Innovaci\'on project PID2023-147073NB-I00.

\bibliographystyle{plain}

\appendix
\section{Local temporal error of the fully discrete method for $q=1,3$}
\label{appA}
The satisfaction of (\ref{fnls3_7}) depends on the particular form of the nonlinearity $f$ and may require additional hypotheses on its regularity and on the parameter $s$ of the fractional Laplacian. This will be illustrated in this appendix, by deriving the condition (\ref{fnls3_7}) for the methods (\ref{44}) with $q=1$ (the implicit midpoint rule, $\alpha=2$) and $q=3$ (with order $\alpha=4$), and then making some comments on how to proceed for a more general term $f$. The outline of the proofs is similar to that in \cite{DD2021} for the KdV equation, although some arguments must be different, due to the different form of the fNLS equation. We will focus, for simplicity, on the $L^{2}$ norm, but similar arguments can be used to derive (\ref{fnls3_7b}).

In what follows we will make use of several properties, listed below.
\begin{itemize}
\item[(a1)] We recall that the $Z^{n,i}$ defined by (\ref{fnls3_6}) exist and satisfy Lemma \ref{lemma33} and $||Z^{n,i}||=||u^{N}(t_{n})||=||u^{N}(0)||$, for all $n,i$.
\item[(a2)] As mentioned above, we may have to extend well-posedness of (\ref{fnls2_0}) and the definition of the semidiscrete solution (\ref{fnls2_2}) for intervals of the form $[-k,T+k]$, where (\ref{fnls2_10b}) and (\ref{fnls2_13}) hold.
\end{itemize}
In the sequel we will use both forms (as complex or real-valued vectors) of the functions involved, as well as the corresponding complex, $(\cdot,\cdot)$, or real, $\langle\cdot,\cdot\rangle={\rm Re}(\cdot,\cdot)$, inner products. Additional notation will involve expressions like $u^{N}, u_{t}^{N}$, etc, that, as above, will denote the evaluation $u^{N}(t_{n}), u_{t}^{N}(t_{n})$, etc, of the corresponding function at time $t_{n}$.
\subsection{The case $f(u)=|u|^{2}u$ and the implicit midpoint rule $q=1$}
\label{secA1}
\begin{proposition}
\label{proposA0}
Let $Z^{n,1},\Theta^{n}$ be defined by (\ref{fnls3_6}), (\ref{fnls3_6a}) for the RK Composition method (\ref{44}) with $q=1$ (Implicit midpoint rule). Let $u$ be the solution of (\ref{fnls2_0}) and assume that $u\in H^{\mu}$ for $0\leq t\leq T$ and $\mu$ large enough. Then, for $k$ small enough, there is a constant $C$, independent of $k$ and $N$, such that
\begin{eqnarray}
\max_{0\leq n\leq M-1}||\Theta^{n}||\leq Ck^{3}.\label{fnls_AA0}
\end{eqnarray}
\end{proposition}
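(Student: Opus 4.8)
The plan is to separate the estimate into a consistency (residual) bound and a short stability argument, exactly as one does for the implicit midpoint rule on an ODE, but taking care of the unbounded operator $(-\partial_{xx})^{s}$ and of the non-holomorphic cubic term. For $q=1$ the single coefficient is $b_{1}=1$, so $\tau_{n}^{1}=t_{n+1}$ and $\Theta^{n}=u^{N}(t_{n+1})-Z^{n,1}$. First I would introduce the residual $\zeta^{n,1}\in S_{N}$ produced by inserting the exact semidiscrete solution into one step of the scheme,
\[
u^{N}(t_{n+1})=u^{N}(t_{n})+kF\left(\frac{u^{N}(t_{n+1})+u^{N}(t_{n})}{2}\right)+\zeta^{n,1},
\]
and reduce the proposition to the two claims $||\zeta^{n,1}||\lesssim k^{3}$ and $||\Theta^{n}||\lesssim||\zeta^{n,1}||$. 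In fact this is precisely the case $j=1$ already carried out inside the proof of Lemma \ref{lemma33a}, where $\tau_{n}^{1}=t_{n+1}$ for the IMR; the appendix merely isolates and records that intermediate bound.

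For the residual I would avoid expanding $F$ directly, since its linear part carries the unbounded operator, and instead use the semidiscrete equation $u_{t}^{N}=F(u^{N})=-i(-\partial_{xx})^{s}u^{N}+iP_{N}f(u^{N})$ to trade $-i(-\partial_{xx})^{s}$ applied to the endpoint values for time derivatives and nonlinear terms. This writes $\zeta^{n,1}=\omega_{1}^{n}-\omega_{2}^{n}$, where
\[
\omega_{1}^{n}=u^{N}(t_{n+1})-u^{N}(t_{n})-k\,\frac{u_{t}^{N}(t_{n+1})+u_{t}^{N}(t_{n})}{2}
\]
is a purely temporal trapezoidal remainder, and $\omega_{2}^{n}$ gathers (up to sign, and with a factor $k$) the cubic-nonlinearity discrepancy between the half-sum of $f$ at the two endpoints and the value of $f$ at the averaged argument. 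Expanding $u^{N}$ about the midpoint $s_{n}^{1}=t_{n}+k/2$, the odd-order terms cancel and $||\omega_{1}^{n}||\lesssim k^{3}$ follows from the uniform bounds on $||\partial_{t}^{2}u^{N}||$ and $||\partial_{t}^{3}u^{N}||$ supplied by Proposition \ref{propos22} for $\mu$ large enough.

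The main obstacle is the nonlinear term $\omega_{2}^{n}$: applying the Lipschitz bound for $f$ crudely only produces $O(k^{2})$, one order short of what is needed. The remedy is to expand both endpoints about $s_{n}^{1}$, writing the averaged argument as $u^{N}(s_{n}^{1})+\eta^{n,1}$ with $||\eta^{n,1}||\lesssim k^{2}$ (again Proposition \ref{propos22}), and likewise to center the half-sum of the $f$-values about $s_{n}^{1}$. Once the linear part has been re-expressed through $u_{t}^{N}$, the leading contributions of the two halves of $\omega_{2}^{n}$ cancel exactly, and the surviving piece is controlled by $||f(u^{N}(s_{n}^{1})+\eta^{n,1})-f(u^{N}(s_{n}^{1}))||\lesssim||\eta^{n,1}||\lesssim k^{2}$ through the estimate (\ref{fnlsL2b}) of Proposition \ref{propos11} (here $f$ is treated as a real map, which is what makes that proposition applicable). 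The prefactor $k$ then yields $||\omega_{2}^{n}||\lesssim k^{3}$, hence $||\zeta^{n,1}||\lesssim k^{3}$.

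It remains to transfer this to $\Theta^{n}$. Subtracting the scheme defining $Z^{n,1}$ from the residual identity, I would take the real part of the $L^{2}$ inner product of $\Theta^{n}=u^{N}(t_{n+1})-Z^{n,1}$ with itself and use that $-i(-\partial_{xx})^{s}$ is skew-adjoint, so it contributes nothing to the real part. Estimating the nonlinear difference by Proposition \ref{propos11}, converting the $L^{\infty}$ factors to $L^{2}$ via the inverse inequality (\ref{Inverse}), and invoking mass conservation $||Z^{n,1}||=||u^{N}(t_{n+1})||=||u^{N}(0)||=:R$ (property (a1), which follows as in Lemma \ref{lemma31}), one obtains
\[
||\Theta^{n}||^{2}\leq 3R^{2}kN|b_{1}|\,||\Theta^{n}||^{2}+||\Theta^{n}||\,||\zeta^{n,1}||.
\]
Under the step-size condition (\ref{fnlsX2}) the first term on the right is absorbed into the left, giving $||\Theta^{n}||\lesssim||\zeta^{n,1}||\lesssim k^{3}$ uniformly in $n$, which is (\ref{fnls_AA0}).
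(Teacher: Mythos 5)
Your argument is correct, and your observation that for $q=1$ (where $b_{1}=1$ and hence $\tau_{n}^{1}=t_{n+1}$) the bound $\|\Theta^{n}\|=\|Z^{n,1}-u^{N}(\tau_{n}^{1})\|\lesssim k^{3}$ is already established as estimate (\ref{fnls_B7}) inside the proof of Lemma \ref{lemma33a} is accurate: none of the ingredients you invoke (Propositions \ref{propos11}, \ref{propos12}, \ref{propos22}, the inverse inequalities, the $I_{1}$-conservation of the $Z$-iteration) depends on the local-error hypothesis (\ref{fnls3_7}), so there is no circularity. Your organization does differ from the paper's own Appendix proof. You split the estimate into a defect bound $\|\zeta^{n,1}\|\lesssim k^{3}$ (obtained, as in Lemma \ref{lemma33a}, by trading $-i(-\partial_{xx})^{s}$ for time derivatives via the semidiscrete equation and centering all expansions at the midpoint $s_{n}^{1}$) followed by a Lipschitz-type stability step; the paper instead writes $Z^{n,1}=u^{N}(t_{n+1})+e^{n,1}$, matches the two expansions of $Z^{n,1}$ power by power in $k$, and derives an implicit equation $e^{n,1}=\Gamma-\tfrac{ik}{2}(-\partial_{xx})^{s}e^{n,1}+\tfrac{ik}{8}P_{N}\mathcal{A}(e^{n,1})$ that is closed by the same skew-adjointness energy argument. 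The substantive difference is in how the nonlinear term is absorbed: your stability step converts $L^{\infty}$ factors to $L^{2}$ via the inverse inequality and therefore needs the mesh condition (\ref{fnlsX2}), whereas the paper's version uses the $N$-independent $L^{\infty}$ bounds of Lemma \ref{lemma33a} and only needs $k$ small; since the paper's proof also cites Lemma \ref{lemma33a}, whose hypotheses include (\ref{fnlsX2}), the two sets of assumptions coincide in practice, but you should state (\ref{fnlsX2}) explicitly since the proposition as written mentions only ``$k$ small enough.'' What each approach buys: yours is shorter and reuses an already-proved computation, but it is specific to $q=1$ — for $q=3$ each intermediate IMR substep has defect only $O(k^{3})$, and the $O(k^{5})$ bound of Proposition \ref{proposA1} comes from cancellations across stages encoded in the correction terms $A_{j},B_{j}$, which is precisely what the paper's expansion-matching template is built to capture.
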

\begin{proof}
We write
\begin{eqnarray}
Z^{n,1}=u(t_{n+1})+e^{n,1},\label{fnls_AA1}
\end{eqnarray}
for some $e^{n,1}\in S_{N}$, and from (\ref{fnls3_6a}), we need to prove that
\begin{eqnarray}
||e^{n,1}||\lesssim k^{3}.\label{fnls_AA2}
\end{eqnarray}
Taylor's expansion of (\ref{fnls_AA1}) about $t_{n}$ leads to
\begin{eqnarray}
Z^{n,1}=u^{N}+ku_{t}^{N}+\frac{k^{2}}{2}u_{tt}^{N}+\rho_{1}+e^{n,1},\label{fnls_AA3}
\end{eqnarray}
where
\begin{eqnarray}
||\rho_{1}||_{j}\lesssim k^{3}\max_{t}||\partial_{t}^{3}u^{N}||_{j},\; j\geq 0.\label{fnls_AA4}
\end{eqnarray}
On the other hand, using (\ref{fnls3_6}) and (\ref{fnls_AA3}), we have
\begin{eqnarray}
Z^{n,1}&=&u^{N}+ik\left(-(-\partial_{xx})^{s}\frac{1}{2}\left(2u^{N}++ku_{t}^{N}+\frac{k^{2}}{2}u_{tt}^{N}+\rho_{1}+e^{n,1}\right)\right)\nonumber\\
&&+\frac{ik}{8}P_{N}\left(|u^{N}+u^{N}(t_{n+1})|^{2}(u^{N}+u^{N}(t_{n+1}))+\mathcal{A}(e^{n,1})\right),\label{fnls_AA5}
\end{eqnarray}
where
\begin{eqnarray}
\mathcal{A}(e^{n,1})&=&|u^{N}+u^{N}(t_{n+1})|^{2}e^{n,1}+\mathcal{M}_{1}(u^{N}+u^{N}(t_{n+1})+e^{n,1}),\nonumber\\
\mathcal{M}_{1}&=&|e^{n,1}|^{2}+2{\rm Re}\left((u^{N}+u^{N}(t_{n+1}))\overline{e^{n,1}}\right)\nonumber\\
&=&{\rm Re}\left((u^{N}+u^{N}(t_{n+1}))\overline{e^{n,1}}\right)+
{\rm Re}\left((u^{N}+Z^{n,1})\overline{e^{n,1}}\right).\label{fnls_AA6}
\end{eqnarray}
We now use the expressions (\ref{fnls_AA3}) and (\ref{fnls_AA5}) and equate in powers of $k$. From (\ref{fnls2_3}) and its derivative with respect to $t$, the $O(1), O(k)$, and $O(k^{2})$ terms lead to an identity. Equating the $O(k^{3})$ and higher-order terms leads to an equation for $e^{n,1}$ of the form
\begin{eqnarray}
e^{n,1}&=&\Gamma-\frac{ik}{2}(-\partial_{xx})^{s}e^{n,1}+\frac{ik}{8}P_{N}\mathcal{A}(e^{n,1}),\label{fnls_AA7}\\
\Gamma&=&-\rho_{1}-\frac{ik}{2}(-\partial_{xx})^{s}\rho_{1}+\frac{ik}{8}P_{N}\mathcal{S},\nonumber
\end{eqnarray}
where $S$ collects the $O(k^{3})$ and higher-order terms of the expansion of $|u^{N}+u^{N}(t_{n+1})|^{2}(u^{N}+u^{N}(t_{n+1}))$ about $t_{n}$. We take the real part of the inner product of (\ref{fnls_AA7}) with $e^{n,1}$, 
\begin{eqnarray}
||e^{n,1}||^{2}&=&{\rm Re}\left(\Gamma,e^{n,1}\right)+{\rm Re}\left(\frac{ik}{2}(-\partial_{xx})^{s}e^{n,1},e^{n,1}\right)\nonumber\\
&&+{\rm Re}\left(\frac{ik}{8}P_{N}\mathcal{A}(e^{n,1}),e^{n,1}\right).\label{fnls_AA7a}
\end{eqnarray}
First note that
\begin{eqnarray*}
{\rm Re}\left(\frac{ik}{2}(-\partial_{xx})^{s}e^{n,1},e^{n,1}\right)=0.
\end{eqnarray*}
Using Proposition \ref{propos22} and (\ref{fnls_AA4}), we have, for $\mu$ large enough,
\begin{eqnarray}
{\rm Re}\left(\Gamma,e^{n,1}\right)\lesssim k^{3}||e^{n,1}||.\label{fnls_AA8}
\end{eqnarray}
Using (\ref{fnls_AA6}), Proposition \ref{propos22}, and Lemma \ref{lemma33a}, for $\mu$ large enough, and $k$ sufficiently small, it holds that
\begin{eqnarray}
{\rm Re}\left(\mathcal{A}(e^{n,1}),e^{n,1}\right)\lesssim ||e^{n,1}||^{2}.\label{fnls_AA9}
\end{eqnarray}
The application of (\ref{fnls_AA8}), (\ref{fnls_AA9}) to (\ref{fnls_AA7a}) yields, for $\mu$ large and $k$ small enough
\begin{eqnarray*}
||e^{n,1}||^{2}\lesssim k^{3}||e^{n,1}||+k||e^{n,1}||^{2},
\end{eqnarray*}
from which (\ref{fnls_AA2}), and therefore (\ref{fnls_AA0}), follow for $k$ small enough.
\end{proof}
\subsection{The case $f(u)=|u|^{2}u$ and the method with $q=3$}
\label{secA2}
The consistency of the scheme (\ref{44}) with $q=3$, for the time integration of (\ref{fnls2_0}) is given in the following result.
\begin{proposition}
\label{proposA1}
Let $Z^{n,1},\Theta^{n}$ be defined by (\ref{fnls3_6}), (\ref{fnls3_6a}) for the RK Composition method (\ref{44}) with $q=3$. Let $u$ be the solution of (\ref{fnls2_0}) and assume that $u\in H^{\mu}$ for $0\leq t\leq T$ and $\mu$ large enough. Then, for $k$ small enough, there is a constant $C$, independent of $k$ and $N$, such that
\begin{eqnarray}
\max_{0\leq n\leq M-1}||\Theta^{n}||\leq Ck^{5}.\label{fnls_A0}
\end{eqnarray}
\end{proposition}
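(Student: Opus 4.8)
The plan is to follow the template of Proposition~\ref{proposA0}, raising every Taylor expansion from order three to order five and exploiting the order and symmetry properties of Yoshida's coefficients ($b_{1}=b_{3}$, together with the fourth-order condition $b_{1}^{3}+b_{2}^{3}+b_{3}^{3}=0$ and $b_{1}+b_{2}+b_{3}=1$) to cancel the intermediate error terms. Since $\Theta^{n}=u^{N}(t_{n+1})-Z^{n,3}$, it suffices to bound $E^{n}:=Z^{n,3}-u^{N}(t_{n+1})$ by $Ck^{5}$. Writing $\tau_{n}^{j}=t_{n}+k\sum_{m=1}^{j}b_{m}$, so that $\tau_{n}^{3}=t_{n+1}$, I would introduce, for $j=1,2,3$, the polynomial $P_{j}(k)$ obtained by solving the $j$-th stage of (\ref{fnls3_6}) formally as a power series in $k$ and truncating at degree four, built recursively from $P_{0}=u^{N}(t_{n})$, and set $Z^{n,j}=P_{j}(k)+e^{n,j}$ with $e^{n,0}=0$. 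By construction the residual
\[
R_{j}:=P_{j-1}-P_{j}+kb_{j}F\Bigl(\tfrac{P_{j}+P_{j-1}}{2}\Bigr)
\]
is $O(k^{5})$. Its coefficients, and those of the $P_{j}$, are elementary differentials of $F$ at $u^{N}(t_{n})$; these, as well as the time-derivatives $\partial_{t}^{m}u^{N}(t_{n})$ entering the exact expansion, are bounded in every $H^{l}$ for $\mu$ large by Propositions~\ref{propos22} and~\ref{propos12} (on the extended interval, property (a2)), so $\|R_{j}\|\lesssim k^{5}$.

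The order conditions enter next. Because $b_{1}=b_{3}$, the composed one-step map is symmetric (IMR being itself symmetric), hence its defect with respect to the semidiscrete flow (\ref{fnls2_3}) expands in odd powers of $k$ only, with no $O(k^{2})$ or $O(k^{4})$ term; the surviving $O(k^{3})$ term is proportional to $b_{1}^{3}+b_{2}^{3}+b_{3}^{3}$ and vanishes by Yoshida's fourth-order condition. Consequently $P_{3}(k)$ agrees with the Taylor polynomial $\sum_{m=0}^{4}\tfrac{k^{m}}{m!}\partial_{t}^{m}u^{N}(t_{n})$ of $u^{N}(t_{n+1})$ through order $k^{4}$, whence
\[
E^{n}=e^{n,3}+\bigl(P_{3}(k)-u^{N}(t_{n+1})\bigr)=e^{n,3}+O(k^{5}),
\]
and it remains only to estimate $e^{n,3}$.

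To close the estimate I would use the energy argument of Proposition~\ref{proposA0} stage by stage. Subtracting the defining relation of $P_{j}$ from the $j$-th equation of (\ref{fnls3_6}) gives, with $F$ as in (\ref{fnls2_4*}),
\[
e^{n,j}-e^{n,j-1}=kb_{j}\Bigl(F\bigl(\tfrac{Z^{n,j}+Z^{n,j-1}}{2}\bigr)-F\bigl(\tfrac{P_{j}+P_{j-1}}{2}\bigr)\Bigr)+R_{j}.
\]
Taking the real part of the inner product with $\tfrac{e^{n,j}+e^{n,j-1}}{2}$, the dispersive contribution $-i(-\partial_{xx})^{s}\tfrac{e^{n,j}+e^{n,j-1}}{2}$ is skew and drops out, exactly as in Lemmas~\ref{lemma33a} and~\ref{lemma33}; the cubic contribution is controlled by Proposition~\ref{propos11} together with the uniform $L^{\infty}$ bounds on the $Z^{n,j}$ (Lemma~\ref{lemma33a}) and on the $P_{j}$, producing a term $\lesssim k\,\|\tfrac{e^{n,j}+e^{n,j-1}}{2}\|^{2}$; and $\|R_{j}\|\lesssim k^{5}$. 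This yields $\tfrac12(\|e^{n,j}\|^{2}-\|e^{n,j-1}\|^{2})\lesssim k\,(\|e^{n,j}\|+\|e^{n,j-1}\|)^{2}+k^{5}(\|e^{n,j}\|+\|e^{n,j-1}\|)$, and, as in the derivation of (\ref{fnls3_8b}), iterating over $j=1,2,3$ with $e^{n,0}=0$ and $k$ small enough (cf.\ (\ref{fnlsX2})) gives $\|e^{n,3}\|\lesssim k^{5}$, which proves (\ref{fnls_A0}).

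The main obstacle is the second step: organizing the degree-four expansion of the composed map and verifying that the $O(k^{3})$ and $O(k^{4})$ defects cancel under Yoshida's conditions. In the $q=1$ case consistency to order two followed from (\ref{fnls2_3}) and a single differentiation; here one must track elementary differentials up to order four and use the cubic and symmetry identities, which is routine but lengthy bookkeeping. The implicitness of the stages is harmless, being absorbed into the left-hand side of the energy estimate just as in Proposition~\ref{proposA0}. The same scheme applies to a general nonlinearity $f$, provided $f$ is smooth enough and $s$ large enough that the terms $P_{N}f(\cdot)$ in $F$ and in $R_{j}$ can be bounded in the requisite $H^{l}$ norms through Proposition~\ref{propos12}.
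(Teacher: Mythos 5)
Your proposal is correct in its essentials and follows the same three-step skeleton as the paper's proof of Proposition~\ref{proposA1} — a degree-four polynomial expansion of each stage of (\ref{fnls3_6}) with an $O(k^{5})$ remainder, a skew-adjointness energy estimate for the remainder, and the order conditions $b_{1}+b_{2}+b_{3}=1$, $b_{1}=b_{3}$, $\sum_{i}b_{i}^{3}=0$ to make the final stage match $u^{N}(t_{n+1})$ to $O(k^{5})$ — but it differs in two technical choices worth recording. First, you test the error equation for $e^{n,j}$ against the average $\tfrac{1}{2}(e^{n,j}+e^{n,j-1})$, so the whole dispersive contribution is annihilated at once; the paper tests against $e^{n,j}$ alone and must therefore propagate the auxiliary estimates $k\|(-\partial_{xx})^{s}e^{n,j-1}\|\lesssim k^{5}$ (see (\ref{fnls_A18}), (\ref{fnls_A29})) from stage to stage to control the cross term $\langle \tfrac{ikb_{j}}{2}(-\partial_{xx})^{s}e^{n,j-1},e^{n,j}\rangle$. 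Your pairing, borrowed from Lemma~\ref{lemma33a}, is cleaner and makes those auxiliary estimates unnecessary. Second, you replace the paper's explicit power-by-power cancellation of the $O(k^{3})$ and $O(k^{4})$ defects by an appeal to the general theory of symmetric compositions; the conclusion is right, but the justification is imprecise on two points. (a) The defect of a symmetric method does \emph{not} expand in odd powers of $k$ only (the implicit midpoint rule itself has a nonvanishing $O(k^{4})$ defect coefficient); what symmetry gives is that the \emph{leading} defect term has odd index, so the correct chain is: $\sum b_{i}=1$ gives order $2$, $\sum b_{i}^{3}=0$ gives order $3$, and palindromy then upgrades order $3$ to order $4$. (b) Transferring the formal B-series order statement to this setting requires checking that the coefficients of your $P_{j}(k)$ and $R_{j}$ — elementary differentials of $F$ containing iterated applications of $(-\partial_{xx})^{s}$ and $P_{N}f'(\cdot)$ — are bounded in $L^{2}$ \emph{uniformly in $N$}. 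This is indeed available, via $\|u^{N}\|_{l}\leq C$ from Proposition~\ref{propos22} and the algebra property behind Proposition~\ref{propos12}, but it is exactly the content of the paper's ``tedious but straightforward calculations,'' and your write-up asserts it rather than verifies it. Neither point is a fatal gap, but both must be written out (together with the hypothesis (\ref{fnlsX2}) needed to invoke Lemma~\ref{lemma33a} for the $L^{\infty}$ bounds on the $Z^{n,j}$) for the argument to be complete.
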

\begin{proof}
As in \cite{DD2021}, the proof of (\ref{fnls_A0}) consists of deriving asymptotic expansions
\begin{eqnarray}
Z^{n,1}&=&u^{N}(\tau_{n}^{1})+k^{3}A_{1}+k^{4}A_{2}+e^{n,1},\label{fnls_A1}\\
Z^{n,2}&=&u^{N}(\tau_{n}^{2})+k^{3}B_{1}+k^{4}B_{2}+e^{n,2},\label{fnls_A2}
\end{eqnarray}
where $\tau_{n}^{1}=t_{n}+kb_{1}, \tau_{n}^{2}=t_{n}+k(b_{1}+b_{2})$, $u^{N}=v^{N}+iw^{N}$, for some coefficients $A_{j}, B_{j}\in S_{N}$ and remainders $e^{n,j}\in S_{N}, j=1,2$, satisfying
\begin{eqnarray}
||e^{n,j}||\lesssim k^{5}.\label{fnls_A2a}
\end{eqnarray}
The final step is proving that
\begin{eqnarray}
Z^{n,3}=Z^{n+1}+e^{n,3}=u^{N}(t_{n+1})+e^{n,3},\label{fnls_A3}
\end{eqnarray}
for some $e^{n,3}\in S_{N}$ satisfying
\begin{eqnarray}
||e^{n,3}||\lesssim k^{5}.\label{fnls_A3a}
\end{eqnarray}
We start with the derivation of (\ref{fnls_A1}). The term $Z^{n,1}$ is first expanded in the form
\begin{eqnarray}
Z^{n,1}&=&u^{N}+kb_{1}u_{t}^{N}+\frac{k^{2}b_{1}^{2}}{2}u_{tt}^{N}+\frac{k^{3}b_{1}^{3}}{6}u_{ttt}^{N}\nonumber\\
&&+\frac{k^{4}b_{1}^{4}}{24}\partial_{t}^{4}u^{N}+\rho_{1}+k^{3}A_{1}+k^{4}A_{2}+e^{n,1},\label{fnls_A5}
\end{eqnarray}
where $\rho_{1}$ is the Taylor remainder of the expansion of $u^{N}(\tau_{n}^{1})$ about $t_{n}$ with
\begin{eqnarray}
||\rho_{1}||_{j}\lesssim k^{5}\max_{t}||\partial_{t}^{5}u^{N}||_{j},\quad j\geq 0.\label{fnls_A5a}
\end{eqnarray}
From (\ref{fnls3_6}) with $i=1$, we have, using (\ref{fnls_A5})
\begin{eqnarray}
Z^{n,1}&=&u^{N}+ikb_{1}\left(-(-\partial_{xx})^{s}\frac{1}{2}\left(Z^{n,1}+u^{N}\right)\right)\nonumber\\
&&+\frac{ikb_{1}}{8}P_{N}\left(|Z^{n,1}+u^{N}|^{2}(Z^{n,1}+u^{N})\right),\nonumber\\
&=&u^{N}+ikb_{1}\left(-(-\partial_{xx})^{s}\frac{1}{2}\left(2u^{N}+kb_{1}u_{t}^{N}\right.\right.\nonumber\\
&&\left.\left.+\frac{k^{2}b_{1}^{2}}{2}u_{tt}^{N}+\frac{k^{3}b_{1}^{3}}{6}u_{ttt}^{N}+\rho_{2}+k^{3}A_{1}+k^{4}A_{2}+e^{n,1}\right)\right)\nonumber\\
&&+\frac{ikb_{1}}{8}P_{N}\left(|G_{1}|^{2}G_{1}+\mathcal{A}(e^{n,1})\right),\label{fnls_A6}
\end{eqnarray}
where
\begin{eqnarray}
||\rho_{2}||_{j}&\lesssim &k^{4}\max_{t}||\partial_{t}^{4}u^{N}||_{j},\quad j\geq 0,\label{fnls_A5b}\\
G_{1}&=&u^{N}+u^{N}(\tau_{n}^{1})+k^{3}A_{1}+k^{4}A_{2},\label{fnls_A5c}\\
\mathcal{A}(e^{n,1})&=&|G_{1}|^{2}e^{n,1}+2{\rm Re}(G_{1}\overline{e^{n,1}})+|e^{n,1}|^{2}.\label{fnls_A5d}
\end{eqnarray}
Now we equate the two expansions (\ref{fnls_A5}) and (\ref{fnls_A6}) of $Z^{n,1}$ and, taking into account (\ref{fnls_A5a})-(\ref{fnls_A5d}), we identify the coefficients in the same power of $k$ on both sides, in order to derive $A_{1}, A_{2}$ and the estimate (\ref{fnls_A2a}) for $i=1$.

\noindent{\bf 1.}
Equating the $O(k)$ terms leads to
\begin{eqnarray*}
kb_{1}u_{t}^{N}=-ikb_{1}(-\partial_{xx})^{s}u^{N}+ikb_{1}P_{N}(|u^{N}|^{2}u^{N}),
\end{eqnarray*}
which is an identity, in view of (\ref{fnls2_3}).

\noindent{\bf 2.}
Equating the $O(k^{2})$ terms leads to
\begin{eqnarray}
\frac{k^{2}b_{1}^{2}}{2}u_{tt}^{N}=-i\frac{k^{2}b_{1}^{2}}{2}(-\partial_{xx})^{s}u_{t}^{N}+i\frac{k^{2}b_{1}^{2}}{2}P_{N}(|u^{N}|^{2}u_{t}^{N}+\partial_{t}(|u^{N}|^{2})u^{N}),\label{fnls_A8}
\end{eqnarray}
which is an identity, from the differentiation of (\ref{fnls2_3}) with respect to $t$.

\noindent{\bf 3.}
Equating the $O(k^{3})$ terms leads to
\begin{eqnarray}
\frac{k^{3}b_{1}^{3}}{6}u_{ttt}^{N}+k^{3}A_{1}&=&-i\frac{k^{3}b_{1}^{3}}{4}(-\partial_{xx})^{s}u_{tt}^{N}\label{fnls_A9}\\
&&+i\frac{k^{3}b_{1}^{3}}{4}P_{N}(|u^{N}|^{2}u_{tt}^{N}+\partial_{t}(|u^{N}|^{2})u_{t}^{N}+|u_{t}^{N}|^{2}u^{N}),\nonumber
\end{eqnarray}
and equation (\ref{fnls_A9}) determines the coefficient $A_{1}$ in the expansion (\ref{fnls_A1}), which can be simplified by using (\ref{fnls_A8}).

\noindent{\bf 4.}
Equating the $O(k^{4})$ terms leads to
\begin{eqnarray*}
\frac{k^{4}b_{1}^{4}}{24}\partial_{t}^{4}u^{N}+k^{4}A_{2}&=&-i\frac{k^{4}b_{1}^{4}}{12}(-\partial_{xx})^{s}u_{ttt}^{N}\nonumber\\
&&+i\frac{kb_{1}}{8}P_{N}\left(4|u^{N}|^{2}\left(\frac{k^{3}b_{1}^{3}}{6}u_{ttt}^{N}+A_{1}k^{3}\right)\right.\nonumber\\
&&\left.+k^{3}b_{1}^{3}\left(2u_{tt}^{N}\partial_{t}|u^{N}|^{2}+|u_{t}^{N}|^{2}u_{t}^{N}\right)\right.\nonumber\\
&&\left.+k^{3}b_{1}^{3}\left(u^{N}\partial_{t}|u_{t}^{N}|^{2}+2(v^{N}v_{tt}^{N}+w^{N}w_{tt}^{N})u_{t}^{N}\right)\right.\nonumber\\
&&\left.+\frac{8}{3}k^{3}b_{1}^{3}\left(v^{N}v_{ttt}^{N}+w^{N}w_{ttt}^{N})u^{N}\right)\right),\label{fnls_A9b}
\end{eqnarray*}
from which the second coefficient $A_{2}$ is determined.

\noindent{\bf 5.}
Equating the $O(k^{5})$ and higher-order terms, and after some calculations, the system for $e^{n,1}$ is
\begin{eqnarray}
e^{n,1}+\frac{ikb_{1}}{2}(-\partial_{xx})^{s}e^{n,1}=\Gamma_{1}+\frac{ikb_{1}}{8}P_{N}\mathcal{A}(e^{n,1}),\label{fnls_A10}
\end{eqnarray}
where $\mathcal{A}(e^{n,1})$ is given by (\ref{fnls_A5d}) and
\begin{eqnarray*}
\Gamma_{1}=-\rho_{1}-\frac{ikb_{1}}{2}\left((-\partial_{xx})^{s}\left(\rho_{2}+{k^{4}}A_{2}\right)\right)+\frac{ikb_{1}}{8}P_{N}\beta,\label{fnls_A10a}
\end{eqnarray*}
where $\beta$ denotes the $k^{4}$th term of the Taylor expansion of $G_{1}$, given in (\ref{fnls_A5c}), and a remainder satisfying a bound of the form similar to (\ref{fnls_A5a}). We take the real part of the inner product of (\ref{fnls_A10}) with $e^{n,1}$ to have
\begin{eqnarray}
||e^{n,1}||^{2}+\langle ikb_{1}(-\partial_{xx})^{s}e^{n,1},e^{n,1}\rangle=\langle \Gamma_{1},e^{n,1}\rangle+\langle \frac{ikb_{1}}{8}P_{N}\mathcal{A}(e^{n,1}),e^{n,1}\rangle.\label{fnls_A12}
\end{eqnarray}
First note that
\begin{eqnarray}
\langle\frac{ik}{2}(-\partial_{xx})^{s}e^{n,1},e^{n,1}\rangle=0.\label{fnls_A11}
\end{eqnarray}
Using Proposition \ref{propos22}, (\ref{fnls_A5a}), and (\ref{fnls_A5b}), we have, for $\mu$ large enough,
\begin{eqnarray}
\langle\Gamma_{1},e^{n,1}\rangle\lesssim k^{5}||e^{n,1}||.\label{fnls_A11a}
\end{eqnarray}
We use (\ref{fnls_A1}) and (\ref{fnls_A5c}) to write (\ref{fnls_A5d}) in the form
\begin{eqnarray}
\mathcal{A}(e^{n,1})&=&|G_{1}|^{2}e^{n,1}+{\rm Re}((e^{n,1}+G_{1})\overline{e^{n,1}})+{\rm Re}(G_{1}\overline{e^{n,1}})\nonumber\\
&=&|G_{1}|^{2}e^{n,1}+{\rm Re}((Z^{n,1}+u^{N})\overline{e^{n,1}})+{\rm Re}(G_{1}\overline{e^{n,1}}).\label{fnls_A11c}
\end{eqnarray}
Then, using Proposition \ref{propos22}, and Lemma \ref{lemma33a}, for $\mu$ large enough, and $k$ sufficiently small, it holds that
\begin{eqnarray}
\langle\mathcal{A}(e^{n,1}),e^{n,1}\rangle\lesssim ||e^{n,1}||^{2}.\label{fnls_A11b}
\end{eqnarray}
The application of (\ref{fnls_A11})-(\ref{fnls_A11b}) to (\ref{fnls_A12}) yields, for $\mu$ large and $k$ small enough
\begin{eqnarray*}
||e^{n,1}||^{2}\lesssim k^{5}||e^{n,1}||+k||e^{n,1}||^{2},
\end{eqnarray*}
from which  (\ref{fnls_A2a}) follows in the case $j=1$, for $k$ small enough. Note, in addition, that this implies, using Proposition \ref{propos22} and (\ref{fnls_A11c}) in (\ref{fnls_A10}), that
\begin{eqnarray}
k||(-\partial_{xx})^{s}e^{n,1}||\lesssim k^{5}.\label{fnls_A18}
\end{eqnarray}
The estimate (\ref{fnls_A18}) will be used later on.

\medskip
We continue the proof with the derivation of (\ref{fnls_A2}). The term $Z^{n,2}$ is first expanded in the form
\begin{eqnarray}
Z^{n,2}&=&u^{N}+k(b_{1}+b_{2})u_{t}^{N}+\frac{k^{2}(b_{1}+b_{2})^{2}}{2}u_{tt}^{N}+\frac{k^{3}(b_{1}+b_{2})^{3}}{6}u_{ttt}^{N}\nonumber\\
&&+\frac{k^{4}(b_{1}+b_{2})^{4}}{24}\partial_{t}^{4}u^{N}+\rho_{3}+k^{3}B_{1}+k^{4}B_{2}+e^{n,2},\label{fnls_A20}
\end{eqnarray}
where $\rho_{3}\in S_{N}$ is the remainder of the expansion of $u^{N}(\tau_{n}^{2})$ about $t_{n}$ with
\begin{eqnarray}
||\rho_{3}||_{j}\lesssim k^{5}\max_{t}||\partial_{t}^{5}u^{N}||_{j},\quad j\geq 0.\label{fnls_A20b}
\end{eqnarray}
From (\ref{fnls3_6}) with $i=2$, we have, using (\ref{fnls_A5}) and (\ref{fnls_A20})
\begin{eqnarray}
Z^{n,2}&=&Z^{n,1}+ikb_{2}\left(-(-\partial_{xx})^{s}\frac{1}{2}\left(Z^{n,1}+Z^{n,2}\right)\right)\nonumber\\
&&+\frac{ikb_{1}}{8}P_{N}\left(|Z^{n,1}+Z^{n,2}|^{2}(Z^{n,1}+Z^{n,2})\right),\nonumber\\
&=&u^{N}+ikb_{1}\left(-(-\partial_{xx})^{s}\frac{1}{2}\left(2u^{N}+k(2b_{1}+b_{2})u_{t}^{N}\right.\right.\nonumber\\
&&\left.\left.+\frac{k^{2}(b_{1}^{2}+(b_{1}+b_{2})^{2})}{2}u_{tt}^{N}+\frac{k^{3}(b_{1}^{3}+(b_{1}+b_{2})^{3})}{6}u_{ttt}^{N}\right.\right.\nonumber\\
&&\left.\left.+\rho_{4}+k^{3}(A_{1}+B_{1})+k^{4}(A_{2}+B_{2})+e^{n,1}+e^{n,2}\right)\right)\nonumber\\
&&+\frac{ikb_{1}}{8}P_{N}\left(|G_{2}|^{2}G_{2}+\mathcal{B}(e^{n,1},e^{n,2})\right),\label{fnls_A21}
\end{eqnarray}
where
\begin{eqnarray}
||\rho_{4}||_{j}&\lesssim &k^{4}\max_{t}||\partial_{t}^{4}u^{N}||_{j},\quad j\geq 0,\label{fnls_A21b}\\
G_{2}&=&u^{N}(\tau_{n}^{1})+u^{N}(\tau_{n}^{2})+k^{3}(A_{1}+B_{1})+k^{4}(A_{2}+B_{2}),\label{fnls_A21c}\\
\mathcal{B}(e^{n,1},e^{n,2})&=&|G_{2}|^{2}(e^{n,1}+e^{n,2})+2{\rm Re}(G_{1}(\overline{e^{n,1}}+\overline{e^{n,2}})\nonumber\\
&&+|e^{n,1}+e^{n,2}|^{2}.\label{fnls_A21d}
\end{eqnarray}
As before,  we equate the two expansions (\ref{fnls_A20}) and (\ref{fnls_A21}) of $Z^{n,2}$ and, taking into account (\ref{fnls_A20b}) and (\ref{fnls_A21b})-(\ref{fnls_A21d}), we identify the coefficients in the same power of $k$ on both sides, in order to derive $B_{1}, B_{2}$ and the estimate (\ref{fnls_A2a}) for $i=2$.

We use (\ref{fnls_A21c}) and expansions of $u^{N}(\tau_{n}^{1}), u^{N}(\tau_{n}^{2})$ about $t_{n}$ to write
\begin{eqnarray}
|G_{2}|^{2}G_{2}&=&\sum_{j=0}^{4}\beta_{j}k^{j}+\rho_{5},\nonumber\\
||\rho_{5}||_{j}&\lesssim &k^{5}\max_{t}||\partial_{t}^{5}u^{N}||_{j},\quad j\geq 0,\label{fnls_A21f}
\end{eqnarray}

With the same arguments as those of the previous steps, and after tedious but straightforward calculations, one can verify that the $O(k^{j})$ terms, $j=0,1,2$, are identities. 
Equating the $O(k^{3})$ terms leads to
\begin{eqnarray}
\frac{k^{3}(b_{1}+b_{2})^{3}}{6}u_{ttt}^{N}+k^{3}B_{1}&=&\frac{k^{3}b_{1}^{3}}{6}u_{ttt}^{N}+k^{3}A_{1}\nonumber\\
&&
-i\frac{k^{3}b_{2}(b_{1}^{2}+(b_{1}+b_{2})^{2})}{4}(-\partial_{xx})^{s}u_{tt}^{N}\nonumber\\
&&+i\frac{kb_{2}}{8}P_{N}(\beta_{2}),\label{fnls_A28}
\end{eqnarray}
and equation (\ref{fnls_A28}) determines the coefficient $B_{1}$ in the expansion (\ref{fnls_A2}). 
Equating the $O(k^{4})$ terms leads to
\begin{eqnarray}
\frac{k^{4}(b_{1}+b_{2})^{4}}{24}\partial_{t}^{4}u^{N}+k^{4}B_{2}&=&\frac{k^{4}b_{1}^{4}}{24}\partial_{t}^{4}u^{N}+k^{4}A_{2}\nonumber\\
&&
-i\frac{k^{4}b_{2}(b_{1}^{3}+(b_{1}+b_{2})^{3})}{12}(-\partial_{xx})^{s}u_{ttt}^{N}\nonumber\\
&&-i\frac{k^{4}b_{2}}{2}(-\partial_{xx})^{s}(A_{1}+B_{1})\nonumber\\
&&+i\frac{kb_{2}}{8}P_{N}(\beta_{3}),\label{fnls_A24}
\end{eqnarray}
from which $B_{2}$ is derived.
Equating the $O(k^{5})$ and higher-order terms, the system for $e^{n,2}$ is now
\begin{eqnarray}
e^{n,2}+\frac{ikb_{2}}{2}(-\partial_{xx})^{s}e^{n,2}&=&\Gamma_{2}+e^{n,1}+\frac{ikb_{2}}{2}(-\partial_{xx})^{s}e^{n,1}\nonumber\\
&&+
\frac{ikb_{1}}{8}P_{N}\mathcal{B}(e^{n,1},e^{n,2}),\label{fnls_A25}
\end{eqnarray}
where $\mathcal{B}(e^{n,1},e^{n,2})$ is given by (\ref{fnls_A21d}) and
\begin{eqnarray}
\Gamma_{2}&=&\rho_{1}-\rho_{3}-\frac{ikb_{2}}{2}\left((-\partial_{xx})^{s}\left(\rho_{4}+{k^{4}}(A_{2}+B_{2})\right)\right)\nonumber\\
&&+\frac{ikb_{2}}{8}P_{N}(\beta_{4}+\rho_{5}),\label{fnls_A25b}
\end{eqnarray}
We take the real part of the inner product of (\ref{fnls_A25}) with $e^{n,2}$ to have
\begin{eqnarray}
||e^{n,2}||^{2}+\langle \frac{ikb_{2}}{2}(-\partial_{xx})^{s}e^{n,2},e^{n,2}\rangle&=&\langle \Gamma_{2},e^{n,2}\rangle+\langle e^{n,1},e^{n,2}\rangle\nonumber\\
&&+\langle \frac{ikb_{2}}{2}(-\partial_{xx})^{s}e^{n,1},e^{n,2}\rangle\nonumber\\
&&+\langle \frac{ikb_{1}}{8}P_{N}\mathcal{B}(e^{n,1},e^{n,2}),e^{n,2}\rangle.\label{fnls_A26}
\end{eqnarray}
First note that
\begin{eqnarray}
\langle\frac{ik}{2}(-\partial_{xx})^{s}e^{n,2},e^{n,2}\rangle=0.\label{fnls_A26a}
\end{eqnarray}
Using Proposition \ref{propos22}, (\ref{fnls_A5a}), (\ref{fnls_A20b}), (\ref{fnls_A21b}), and (\ref{fnls_A21f}), we have, for $\mu$ large enough,
\begin{eqnarray}
\langle\Gamma_{2},e^{n,2}\rangle\lesssim k^{5}||e^{n,2}||.\label{fnls_A26aa}
\end{eqnarray}
We use (\ref{fnls_A2}) and (\ref{fnls_A21c}) to write (\ref{fnls_A21d}) in the form
\begin{eqnarray}
\mathcal{B}(e^{n,1},e^{n,2})&=&|G_{2}|^{2}(e^{n,1}+e^{n,2})+{\rm Re}((Z^{n,1}+Z^{n,2})\overline{e^{n,1}+e^{n,2}})\nonumber\\
&&+{\rm Re}(G_{1}\overline{e^{n,1}+e^{n,2}}).\label{fnls_A26b}
\end{eqnarray}
Then, using Proposition \ref{propos22}, Lemma \ref{lemma33a}, and the estimate (\ref{fnls_A2a}) for $i=1$, for $\mu$ large enough and $k$ sufficiently small, it holds that
\begin{eqnarray}
\langle\mathcal{B}(e^{n,1},e^{n,2}),e^{n,2}\rangle\lesssim k^{5}||e^{n,2}||+||e^{n,2}||^{2}.\label{fnls_A26c}
\end{eqnarray}
In addition, the estimate (\ref{fnls_A2a}) for $i=1$ and (\ref{fnls_A18}) imply that
\begin{eqnarray}
\langle e^{n,1},e^{n,2}\rangle&\lesssim & k^{5}||e^{n,2}||,\label{fnls_A26d}\\
\langle \frac{ikb_{2}}{2}(-\partial_{xx})^{s}e^{n,1},e^{n,2}\rangle&\lesssim & k^{5}||e^{n,2}||,\label{fnls_A26e}
\end{eqnarray}

The application of (\ref{fnls_A26a})-(\ref{fnls_A26e}) to (\ref{fnls_A26}) yields, for $\mu$ large and $k$ small enough
\begin{eqnarray*}
||e^{n,2}||^{2}\lesssim k^{5}||e^{n,2}||+k||e^{n,2}||^{2},
\end{eqnarray*}
from which  (\ref{fnls_A2a}) follows in the case $j=2$, for $k$ small enough. Note, in addition, that this implies, using  (\ref{fnls_A26b}) and Proposition \ref{propos22} in (\ref{fnls_A25b}), that
\begin{eqnarray}
k||(-\partial_{xx})^{s}e^{n,2}||\lesssim k^{5}.\label{fnls_A29}
\end{eqnarray}

\medskip
The last step of the proof consists of verifying (\ref{fnls_A3}), (\ref{fnls_A3a}). 
We expand (\ref{fnls_A3}) in the form
\begin{eqnarray}
Z^{n,3}&=&u^{N}+ku_{t}^{N}+\frac{k^{2}}{2}u_{tt}^{N}+\frac{k^{3}}{6}u_{ttt}^{N}\nonumber\\
&&+\frac{k^{4}}{24}\partial_{t}^{4}u^{N}+\rho_{6}+e^{n,3},\label{fnls_A31}
\end{eqnarray}
where $\rho_{6}\in S_{N}$ satisfying
\begin{eqnarray}
||\rho_{6}||_{j}\lesssim k^{5}\max_{t}||\partial_{t}^{5}u^{N}||_{j},\quad j\geq 0.\label{fnls_A32}
\end{eqnarray}
From (\ref{fnls3_6}) with $i=3$, we have, using (\ref{fnls_A20}) and (\ref{fnls_A31})
\begin{eqnarray}
Z^{n,3}&=&Z^{n,2}+ikb_{3}\left(-(-\partial_{xx})^{s}\frac{1}{2}\left(Z^{n,2}+Z^{n,3}\right)\right)\nonumber\\
&&+\frac{ikb_{1}}{8}P_{N}\left(|Z^{n,2}+Z^{n,3}|^{2}(Z^{n,2}+Z^{n,3})\right),\nonumber\\
&=&u^{N}+ikb_{3}\left(-(-\partial_{xx})^{s}\frac{1}{2}\left(2u^{N}+k(b_{1}+b_{2}+1)u_{t}^{N}\right.\right.\nonumber\\
&&\left.\left.+\frac{k^{2}(1+(b_{1}+b_{2})^{2})}{2}u_{tt}^{N}+\frac{k^{3}(1+(b_{1}+b_{2})^{3})}{6}u_{ttt}^{N}\right.\right.\nonumber\\
&&\left.\left.
+\rho_{7}+k^{3}B_{1}+k^{4}B_{2}+e^{n,2}+e^{n,3}\right)\right)\nonumber\\
&&+\frac{ikb_{1}}{8}P_{N}\left(|G_{3}|^{2}G_{3}+\mathcal{C}(e^{n,2},e^{n,3})\right),\label{fnls_A33}
\end{eqnarray}
where
\begin{eqnarray}
||\rho_{7}||_{j}&\lesssim &k^{4}\max_{t}||\partial_{t}^{4}u^{N}||_{j},\quad j\geq 0,\label{fnls_A33b}\\
G_{3}&=&u^{N}(\tau_{n}^{2})+u^{N}(t_{n+1})+k^{3}B_{1}+k^{4}B_{2},\label{fnls_A33c}\\
\mathcal{C}(e^{n,2},e^{n,3})&=&|G_{3}|^{3}(e^{n,2}+e^{n,3})+2{\rm Re}(G_{3}(\overline{e^{n,2}}+\overline{e^{n,3}})\nonumber\\
&&+|e^{n,2}+e^{n,3}|^{2}.\label{fnls_A33d}
\end{eqnarray}
Again, we equate the two expansions (\ref{fnls_A31}) and (\ref{fnls_A33}) of $Z^{n,3}$ and, taking into account (\ref{fnls_A32})-(\ref{fnls_A33d}), we identify the coefficients in the same power of $k$ on both sides, in order to derive (\ref{fnls_A3}), (\ref{fnls_A3a}).

In this case, we use (\ref{fnls_A33c}) and expansions of $u^{N}(t_{n+1}), u^{N}(\tau_{n}^{2})$ about $t_{n}$ to get the expansion
\begin{eqnarray}
|G_{3}|^{2}G_{3}&=&\sum_{j=0}^{4}\gamma_{j}k^{j}+\rho_{8},\nonumber\\
||\rho_{8}||_{j}&\lesssim &k^{5}\max_{t}||\partial_{t}^{5}u^{N}||_{j},\quad j\geq 0,\label{fnls_A33f}
\end{eqnarray}
As before, careful calculations imply that each $O(k^{j})$ term, $j=0,1,2$, is the same on both sides. Then using differentiation of (\ref{fnls2_3}) with respect to $t$ up to three times, the properties $b_{1}+b_{2}+b_{3}=1, b_{1}=b_{3}$, (\ref{fnls_A33f}), the equations (\ref{fnls_A28}) and (\ref{fnls_A24}) for $B_{1}$ and $B_{2}$ respectively, and after some algebra, one can obtain the identity for the $O(k^{3})$ and for the $O(k^{4})$ terms. 

Equating the $O(k^{5})$ and higher-order terms, the system for $e^{n,3}$ takes now the form
\begin{eqnarray}
e^{n,3}+\frac{ikb_{3}}{2}(-\partial_{xx})^{s}e^{n,3}&=&\Gamma_{3}+e^{n,2}+\frac{ikb_{3}}{2}(-\partial_{xx})^{s}e^{n,2}\nonumber\\
&&+
\frac{ikb_{1}}{8}P_{N}\mathcal{C}(e^{n,2},e^{n,3}),\label{fnls_A36}
\end{eqnarray}
where $\mathcal{C}(e^{n,2},e^{n,3})$ is given by (\ref{fnls_A33d}) and
\begin{eqnarray*}
\Gamma_{3}&=&\rho_{3}-\rho_{6}-\frac{ikb_{3}}{2}\left((-\partial_{xx})^{s}\left(\rho_{7}+{k^{4}}B_{2}\right)\right)\nonumber\\
&&+\frac{ikb_{3}}{8}P_{N}(\gamma_{4}+\rho_{8}),\label{fnls_A36b}
\end{eqnarray*}
We take the real part of the inner product of (\ref{fnls_A36}) with $e^{n,3}$ to have
\begin{eqnarray}
||e^{n,3}||^{2}+\langle \frac{ikb_{3}}{2}(-\partial_{xx})^{s}e^{n,3},e^{n,3}\rangle&=&\langle \Gamma_{3},e^{n,3}\rangle+\langle e^{n,2},e^{n,3}\rangle\nonumber\\
&&+\langle \frac{ikb_{2}}{2}(-\partial_{xx})^{s}e^{n,2},e^{n,3}\rangle\nonumber\\
&&+\langle \frac{ikb_{1}}{8}P_{N}\mathcal{C}(e^{n,2},e^{n,3}),e^{n,3}\rangle.\label{fnls_A37}
\end{eqnarray}
First note that, as in the previous cases
\begin{eqnarray}
\langle\frac{ik}{2}(-\partial_{xx})^{s}e^{n,3},e^{n,3}\rangle=0.\label{fnls_A37a}
\end{eqnarray}
Using Proposition \ref{propos22}, (\ref{fnls_A20b}), (\ref{fnls_A32}), (\ref{fnls_A33b}), and (\ref{fnls_A33f}), we have, for $\mu$ large enough,
\begin{eqnarray}
\langle\Gamma_{3},e^{n,3}\rangle\lesssim k^{5}||e^{n,3}||.\label{fnls_A38a}
\end{eqnarray}
We use (\ref{fnls_A2}) and (\ref{fnls_A33c}) to write (\ref{fnls_A33d}) in the form
\begin{eqnarray}
\mathcal{C}(e^{n,21},e^{n,3})&=&|G_{3}|^{2}(e^{n,2}+e^{n,3})+{\rm Re}((Z^{n,2}+Z^{n,32})\overline{e^{n,2}+e^{n,3}})\nonumber\\
&&+{\rm Re}(G_{3}\overline{e^{n,2}+e^{n,3}}).\label{fnls_A38b}
\end{eqnarray}
Then, using Proposition \ref{propos22}, Lemma \ref{lemma33a}, and the estimate (\ref{fnls_A2a}) for $i=2$, for $\mu$ large enough and $k$ sufficiently small, it holds that
\begin{eqnarray}
\langle\mathcal{C}(e^{n,21},e^{n,3}),e^{n,3}\rangle\lesssim k^{5}||e^{n,3}||+||e^{n,3}||^{2}.\label{fnls_A38c}
\end{eqnarray}
In addition, the estimate (\ref{fnls_A2a}) for $i=2$ and (\ref{fnls_A29}) imply that
\begin{eqnarray}
\langle e^{n,2},e^{n,3}\rangle&\lesssim & k^{5}||e^{n,3}||,\label{fnls_A38d}\\
\langle \frac{ikb_{2}}{2}(-\partial_{xx})^{s}e^{n,2},e^{n,3}\rangle&\lesssim & k^{5}||e^{n,3}||,\label{fnls_A38e}
\end{eqnarray}

The application of (\ref{fnls_A37a})-(\ref{fnls_A38e}) to (\ref{fnls_A37}) yields, for $\mu$ large and $k$ small enough
\begin{eqnarray*}
||e^{n,3}||^{2}\lesssim k^{5}||e^{n,3}||+k||e^{n,3}||^{2},
\end{eqnarray*}
from which  (\ref{fnls_A3}), (\ref{fnls_A3a}) follow, and the proof is complete.
\end{proof}
\subsection{General $f$}
\label{secA3}
As observed in the proof of Propositions \ref{proposA0} and \ref{proposA1}, one can identify key steps that may be generalized for methods (\ref{44}) of arbitrary stages $q=3^{p-1}, p\geq 1$. They can be summarized as follows:
\begin{itemize}
\item The aim of the proof is to verify the expansions
\begin{eqnarray*}
Z^{n,j}=u^{N}(\tau_{n}^{j})+k^{p+1}A_{1}^{(i)}+\cdots+k^{2p}A_{p}^{(i)}+e^{n,j},
\end{eqnarray*}
where for $1\leq j\leq q-1$,
$$\tau_{n}^{j}=t_{n}+k\sum_{l=1}^{j}b_{l},$$ $A_{1}^{(j)},\ldots,A_{p}^{(j)}, e^{n,j}\in S_{N}$, with $||e^{n,j}||\lesssim k^{2p+1}$, and at the final step
$$Z^{n,s}=u^{N}(t_{n+1})+e^{n,s},\quad ||e^{n,s}||\lesssim k^{2p+1}.$$
\item On each stage $j=2,\ldots,q$, we expand the corresponding formula in (\ref{fnls3_6}) for $Z^{n,j}$ and compare with Taylor's expansion about $t_{n}$, equating both expansions in powers of $k$. From the corresponding equations for $k^{p+1},\ldots,k^{2p}$, the coefficients  $A_{1}^{(j)},\ldots,A_{p}^{(j)}$ are derived, while the equation for $k^{2p+1}$ and higher-order terms will be used to estimate the remainders $e^{n,j}$, from the estimates obtained in the previous step for $e^{n,j-1}$ and $(-\partial_{xx})^{s}e^{n,j-1}$. (In the case $j=1$, this role is played by $u^{N}$ and Proposition \ref{propos22} is used.)
\item The specific form of $f$ is mainly used in the derivation of the equations for each power of $k$. For the cubic case $f(u)=|u|^{2}u$, they come from the expansions at $t_{n}$ of $|u|$ (or $|u|^{2}$). The nonlinear terms in the expansions must be separated into two main contributions, one of them independent of the $e^{n,j}$.
\item The estimates for $e^{n,j}$, from taking the (real part of the) inner product of the corresponding equation and $e^{n,j}$, requires the control of the resulting elements using:
\begin{enumerate}
\item The estimates obtained in the previous step for $e^{n,j-1}$ and $(-\partial_{xx})^{s}e^{n,j-1}$.
\item The use of Proposition \ref{propos22}, whih depends on the regularity of the exact solution $u$ and $f$.
\item Writing the nonlinear term involving $e^{n,j}$ and $e^{n,j-1}$ in a suitable form  to apply Lemma \ref{lemma33a}, and avoiding, in the final inequality for $||e^{n,j}||$, the presence of powers of $||e^{n,j}||$ of order higher than two.
\end{enumerate}
\end{itemize}
\end{document}